\newtheorem{Theorem}{Theorem}[section]
\newtheorem{theorem}[Theorem]{Theorem}
\newtheorem{proposition}[Theorem]{Proposition}
\newtheorem{lemma}[Theorem]{Lemma}
\newtheorem{fact}[Theorem]{Fact}
\newtheorem{remark/def}[Theorem]{Remark/Definition}
\newtheorem{claim}[Theorem]{Claim}
\theoremstyle{definition}
\newtheorem{example}[Theorem]{Example}
\newtheorem{remark}[Theorem]{Remark}
\newtheorem{definition}[Theorem]{Definition}
\newtheorem{notation}[Theorem]{Notation}
\newtheorem{question}[Theorem]{Question}
\newtheorem{def/rem}[Theorem]{Definition/Remark}
\newtheorem{def/fact}[Theorem]{Definition/Fact}
\newtheorem{not/rem}[Theorem]{Notation/Remark}
\newsavebox{\indbin}
\savebox{\indbin}{\begin{picture}(0,0)
\newlength{\gnu}
\settowidth{\gnu}{$\smile$} \setlength{\unitlength}{.5\gnu}
\put(-1,-.65){$\smile$} \put(-.25,.1){$|$}
\end{picture}}
\def \indo {\mathop{\smile \hskip -0.9em ^| \ }}
\def \depo {\mathop{ \not \smile \hskip -0.9em  ^| \ }}
\newcommand{\be}{\begin{enumerate}}
\newcommand{\bi}{\begin{itemize}}
\newcommand{\bd}{\begin{defn}}
\newcommand{\bt}{\begin{theorem}}
\newcommand{\bl}{\begin{lemma}}
\newcommand{\ee}{\end{enumerate}}
\newcommand{\ei}{\end{itemize}}
\newcommand{\ed}{\end{defn}}
\newcommand{\et}{\end{theorem}}
\newcommand{\el}{\end{lemma}}
\newcommand{\CP}{{\mathcal P}}
\newcommand{\CB}{{\mathcal B}}
\newcommand{\CC}{{\mathcal C}}
\newcommand{\CG}{{\mathcal G}}
\newcommand{\CL}{{\mathcal L}}
\newcommand{\CH}{{\mathcal H}}
\newcommand{\CJ}{\mathcal J}
\newcommand{\CN}{{\mathcal N}}
\newcommand{\BN}{{\mathbb N}}
\newcommand{\BZ}{{\mathbb Z}}
\newcommand{\CS}{\mathcal S}
\newcommand{\id}{\operatorname{id}}
\newcommand{\aut}{\operatorname{Aut}}
\newcommand{\Th}{\operatorname{Th}}
\def\res{\operatorname{res}}
\def\dcl{\operatorname{dcl}}
\def\acl{\operatorname{acl}}
\def\ker{\operatorname{Ker}}
\def\Im{\operatorname{Im}}
\def\tp{\operatorname{tp}}
\def\Sym{\operatorname{Sym}}
\def\ob{\operatorname{Ob}}
\def\mor{\operatorname{Mor}}
\def\qftp{\operatorname{qftp}}
\def\Sym{\operatorname{Sym}}
\def\SPG{\operatorname{SPG}}
\def\PG{\operatorname{PG}}
\def\Emb{\operatorname{Emb}}
\def\IM{\operatorname{Im}}
\def\SIM{\operatorname{SIm}}
\def\FSIM{\operatorname{FSIm}}
\def\coFSIM{\operatorname{coFSIm}}
\def\Pro{\operatorname{Pro}}
\keywords{sorted profinite groups, sorted complete system, sorted embedding property, co-sorted embedding property, sorted embedding cover}
\subjclass[2010]{Primary 03C60, Secondary 08C10}
\title{The embedding property for sorted profinite groups}
\author{Junguk Lee}
\address{Department of Mathematics, Changwon National University\\ Changwon 51140\\ South Korea}
\email{ljwhayo@changwon.ac.kr}
\begin{document}

\begin{abstract}
We study the embedding property in the category of sorted profinite groups. We introduce a notion of the sorted embedding property (SEP), analogous to the embedding property for profinite groups. We show that any sorted profinite group has a universal SEP-cover. Our proof gives an alternative proof for the existence of a universal embedding cover of a profinite group. Also our proof works for any full subcategory of the sorted profinite groups, which is closed under taking finite quotients, fibre products, and inverse limits.

We also show that any sorted profinite group having SEP has a sorted complete system whose theory is $\omega$-categorical and $\omega$-stable under the assumption that the set of sorts is countable.
\end{abstract}

\maketitle

\section{Introduction}\label{Section:intro}
For a profinite group $G$, let $\IM(G)$ be the set of isomorphism classes of finite quotients of $G$. We say that $G$ has the {\it embedding property} (EP) if for $A,B\in \IM(G)$ and for every epimorphisms $\Pi:A\rightarrow B$ and $\varphi:G\rightarrow B$, there is an epimorphism $\psi:G\rightarrow A$ such that $\Pi\circ\psi=\varphi$.

In field arithmetic and model theory of fields, the embedding property for profinite groups appears surprisingly. Let $k^{ab}$ and $k^{sol}$ be the maximal abelian extension and the maximal solvable extension of a number field $k$ respectively. In \cite{Iwa}, Iwasawa showed that the Galois group $G(k^{sol}/k^{ab})$ has the embedding property. A {\it Frobenius field} is a PAC field whose absolute Galois group has the embedding property. Over all Frobenius fields whose absolute Galois groups have the same set of isomorphism classes of finite quotients and which contain a common subfield $K$, Fried, Haran, and Jarden in \cite{FHJ} developed the theory of Galois stratification of $K$-constructible sets and proved an elimination of quantifiers of Galois formulas through Galois stratification. In \cite{HL}, Haran and Lubotzky gave a primitive recursive procedure to construct the universal embedding cover of a given finite group. Combined with the elimination of quantifiers of Galois formulas and the universal Frattini cover, they showed that the theory of perfect Frobenius fields is primitive recursive, and the theory of all Frobenius fields is decidable. 

Meanwhile, in model theory of PAC fields, one of most influential results is the work of Cherlin-van den Dries-Macintyre in \cite{CvDM}. In \cite{CvDM}, they first introduced a notion of a {\em complete system} of a profinite group, which encodes the inverse system of finite quotients of the profinite group by its open normal subgroups, and so the category of complete systems whose morphisms are embeddings and the category of profinite groups whose morphisms are epimorphisms are equivalent by a contravariant functor.  By the Elementary Equivalence Theorem for PAC fields (cf. \cite[Theorem 3.2]{JK75}), the theory of a PAC field is determined by its basic algebraic properties, for example, characteristic, imperfection degree, and the relative algebraic closure of the prime field, and the theory of the complete system of the Galois group of the field. So, model theoretic properties of PAC fields are reduced to model theoretic properties of complete systems. Also, the complete system is the `right' object to study profinite groups model theoretically. Most of all, the class of complete systems is an elementary class, and for the Galois group $G(K)$ of a field $K$, its complete system is interpretable in the pair consisting of the algebraic closure of $K$ and $K$. Note that an ultraproduct of profinite groups need not be a profinite group. In \cite{C1}, Chatzidakis showed that the complete system of a profinite group having the embedding property is $\omega$-stable. Using this with Chatzidakis' independence theorem in \cite[Theorem 3.1]{C2}, Ramsey in \cite[Theorem 3.9.31]{Ra} showed that the theory of a Frobenius field is NSOP$_1$.\\

In this article, we consider the embedding property in the category of sorted profinite groups. In \cite{HoLee}, we introduced a notion of sorted profinite groups to study the (Shelah-)Galois groups of first order structures. The Galois groups of first order structures are typical examples of sorted profinite groups (see Example \ref{ex:sorted_profinite_group}). In \cite[Proposition 5.6]{HoLee}, Hoffmann and the author developed the independence theorem for PAC structures, analogous to \cite[Theorem 3.1]{C2}, and we proved that for a PAC structure $M$, if the sorted complete system of the Galois group of $M$ is $\omega$-stable, then the theory of $M$ is NSOP$_1$. This leads us to try to find a class of sorted profinite groups having $\omega$-stable sorted complete systems. The possible candidates are the sorted profinite groups with ``the embedding property". So, we introduce a notion of the {\em sorted embedding property} (SEP) for a sorted profinite group, exactly analogous to the embedding property for profinite groups. Also, we need to find a notion of ``embedding property" which is first order axiomatizable. For this, we introduce a weaker notion of the finitely sorted embedding property (FSEP) (see Definition \ref{def:Iwasawa_property}). Fortunately, the two notions of the embedding properties for sorted profinite groups are equivalent (see Theorem \ref{thm:FSIP=SIP=IP+homogeneous_sorting_data}).

  We have two main results in this article. First, we show the existence and the uniqueness of the universal sorted embedding cover for a sorted profinite group (see Theorem \ref{thm:existence_universal_SIP_cover} and Theorem \ref{thm:uniqueness_SEP-cover}). Second, we show that the theory of the complete system of a sorted profinite group having the sorted embedding property is $\omega$-stable (see Theorem \ref{thm:description_complete_types}(2)).\\

In Section \ref{Section:preliminaries}, we introduce the category of sorted profinite groups whose morphisms are always epimorphisms, and we see that the category of sorted profinite groups is closed under the inverse limit and the fibre product. And we recall the category of sorted complete systems, which is equivalent to the category of sorted profinite groups via natural contravariant functors. In Section \ref{Section:universal_SIP_cover}, we prove the existence and uniqueness of a universal SEP-cover of  given sorted profinite groups. In Section \ref{Section:model theory of sorted profinite group with SIP}, we prove the theory of the sorted complete system of a sorted profinite group is $\omega$-stable under the assumption that the set of sorts is countable, and we describe the forking independence there.
% As a byproduct of $\omega$-stability, we prove the existence of a universal SEP-cover of a sorted profinite group, which comes from the prime model of the theory of its sorted complete system.

\section{Preliminaries}\label{Section:preliminaries}
\subsection{Sorted profinite group}
For a profinite group $G$, we write $\CN(G)$ for the set of open normal subgroups of $G$. Let $\BN$ be the set of positive integers. {\bf Fix a set $\CJ$, called a set of {\em sorts}}

\begin{notation}
\begin{enumerate}
	\item For $n\in \BN$, let $\CJ^n$ be the set of $n$-tuples of elements in $\CJ$, and let $\CJ^{<\BN}:=\bigcup_{n\in \BN} \CJ^n$.
	\item For $J,J'\in \CJ^{<\BN}$, we write $J\le J'$ if $J$ is a subtuple of $J'$, not necessarily consisting of successive elements. 
	\item For $J,J'\in \CJ^{<\BN}$, we write $J^{\frown} J'$ for the concatenation of $J$ and $J'$.
	\item For $J=(j_1,\ldots,j_n)\in \CJ^n$, $|J|:=n$ and $\Vert J\Vert:=\{j_1,\ldots,j_n\}$.
	\item For $J=(j_1,\ldots,j_n)\in \CJ^n$ and a permutation $\sigma\in \Sym(n)$, $\sigma(J)=(j_{\sigma(1)},\ldots,j_{\sigma(n)})$.
	\item For $J\in \CJ^{<\BN}$, $\sqrt{J}:=\{J'\in \CJ^{<\BN}:\Vert J\Vert\subseteq \Vert J'\Vert\}$.
\end{enumerate}
\end{notation}

\noindent {\bf Fix two functions
\begin{itemize}
	\item $J_{\subseteq}^*:\BN\times \CJ^{<\BN}\rightarrow \CJ^{<\BN}$; and
	\item $J_{\cap}^*:\CJ^{<\BN}\times \CJ^{<\BN}\rightarrow \CJ^{<\BN},(J,J')\mapsto J^{\frown}J'$.
\end{itemize}}
\noindent Note the function $J_{\subseteq}^*$ can be arbitrary. The functions $J_{\subseteq}^*$ and $J_{\cap}^*$ are necessary to axiomatize the {\em sorted complete system} of a sorted profinite group in an appropriate first order language so that the sorted complete system satisfies the modular lattice axiom (cf. \cite[Definition 3.7]{HoLee}).

\begin{definition}\label{def:sorting_data}
For a profinite group $G$, we associate a non-empty subset $F(N)$ of $\CJ^{<\BN}$ for each $N\in \CN(G)$ and consider an indexed family $F:=\{F(N):N\in \CN(G)\}$. We say that the indexed family $F$ is a {\em sorting data} of $G$ if the following hold: For $N,N_1,N_2\in \CN(G)$,
\begin{enumerate}
	\item $F(G)=\CJ^{<\BN}$.
	\item $J\in F(N)\Leftrightarrow \sqrt{J}\subseteq F(N)$;
%	\item For $J\in \CJ^n$ and $\sigma \in \Sym(n)$, $J\in F(N)\Leftrightarrow \sigma(J)\in F(N)$.
	\item Suppose $N_1\subseteq N_2$ and $[G:N_1]\le k$. For $J\in \CJ^{<\BN}$, $$J\in F(N_1)\Rightarrow J_{\subseteq}^*(k,J)\in F(N_2).$$
	\item For $J_1\in F(N_1)$ and $J_2\in F(N_2)$, $J_{\cap}^*(J_1,J_2)\in F(N_1\cap N_2)$.
%	\item (Invariant under the inner automorphism) For $N_1,N_2\in \CN(G)$ and $g\in G$, if $g^{-1}N_1g=N_2$, that is, $\varphi_g[N_1]=N_2$ for the inner automorphism $\varphi_g$, then $$F(N_1)=F(N_2).$$
\end{enumerate}
We call the pair $(G,F)$ a {\em sorted profinite group}, and we say that the sorting data $F$ comes from $\CJ$. For sorting data $F$, $F'$ on $G$, we write $F\subseteq F'$ if $F(N)\subseteq F'(N)$ for any $N\in \CN(G)$. A sorting data $F$ on $G$ is called {\em full} if $F(N)=\CJ^{<\BN}$ for all $N\in \CN(G)$. 
\end{definition}

A typical example of a sorted profinite group comes from the Galois group of a first order structure by attaching additional data to each open normal subgroup of the Galois group, which recognize on which sorts a primitive element of a finite Galois extension of the structure live (cf. \cite[Section 2]{HoLee}). So, `sorted' in a sorted profinite group means that each open normal subgroup is sorted in a certain way.

\begin{example}\label{ex:typical_example_galois_gp}
Fix a first order language $\CL$ with a set $\CJ$ of all sorts. Let $T$ be a complete $\CL$-theory eliminating quantifiers and imaginaries, and let $\mathfrak{C}\models T$ be a monster model. For each $J=(S_1,\ldots,S_k)\in \CJ^{<\BN}$, write $S_J$ for the sort of $S_1\times\cdots\times S_k$.

Let $K\subset \mathfrak{C}$ be a small definably closed substructure. Consider the Galois group of $K$, $$G(K):=G(\acl(K)/K)=\{\varphi\restriction_{\acl(K)}:\varphi\in\aut(\mathfrak{C}/K)\}.$$ For each $N\in \CN(G(K))$, let $\acl(K)^N$ be the substructure consisting of elements in $\acl(K)$ fixed pointwise by each $\sigma\in N$, that is, $$\acl(K)^N:=\{x\in \acl(K):\forall \sigma\in N\left(\sigma(x)=x\right)\}.$$ Note that there is a finite tuple $a\in \acl(K)^N$ such that $\acl(K)^N=\dcl(a,K)$ and we call such an element a {\em primitive element} of $\acl(K)^N$ over $K$ (cf. \cite[Fact 2.8]{HoLee}). An extension $K\subseteq L \subseteq \acl(K)$ is called a {\em Galois extension} of $K$ if $L=\dcl(L)$ and for each $\sigma\in \aut(\mathfrak{C}/K)$, $\sigma[L]=L$. For a Galois extension $K\subseteq L$, the Galois group of $L$ over $K$ is $$G(L/K):=\{\sigma\restriction_L:\sigma\in G(K)\}.$$

Now, we define a natural sorting data $F$ on $G(K)$ as follows: For $N\in \CN(G(K))$ with $[G(K):N]=n$ and for $J\in \CJ^{<\BN}$, $J\in F(N)$ if and only if there is a finite tuple $a$ in $(S_J(\acl(K)))^n$ such that $\dcl(K,a)=\acl(K)^{N}$. Note that there is a function $J_{\subseteq}^*:\BN\times \CJ^{<\BN}\rightarrow \CJ^{<\BN}$ defined in \cite[Remark 3.1]{HoLee} such that for $N\subseteq N'\in \CN(G(K))$ with $[G(K):N]\le k$ and for $J\in \CJ^{<\BN}$, if $J\in F(N)$, then $J_{\subseteq}^*(k,J)\in F(N')$. Similarly, for a Galois extension $L$ of $K$, we define a natural sorting data $F$ on $G(L/K)$ as follows: For $N\in \CN(G(L/K))$ with $[G(L/K):N]=n$ and for $J\in \CJ^{<\BN}$, $J\in F(N)$ if and only if there is a finite tuple $a$ in $(S_J(L))^n$ such that $\dcl(K,a)=L^N$.
\end{example}

\subsection{The category of sorted profinite groups}
The category $\PG$ of profinite groups is a category consisting of the following:
\begin{itemize}
	\item $\ob$ : The objects of $\PG$ are profinite groups.
	\item $\mor$ : Let $G_1$ and $G_2$ be profinite groups. A morphism from $G_1$ to $G_2$ is a continuous homomorphism from $G_1$ to $G_2$.
\end{itemize}

Next, we introduce the category of sorted profinite groups whose sorting data come from $\CJ$. 
\begin{definition}\label{def:category_sorted_profinite_group}
The category $\SPG_{\CJ}(J_{\subseteq}^*,J_{\cap}^*)$ of sorted profinite groups with sorting data from $\CJ$ consists of the following:
\begin{itemize}
	\item $\ob$ : The objects are sorted profinite groups whose sorting data comes from $\CJ$; and
	\item $\mor$ : Let $(G_1,F_1)$ and $(G_2, F_2)$ be in $\ob(\SPG_{\CJ}(J_{\subseteq}^*,J_{\cap}^*))$. A morphism $f$ from $(G_1,F_1)$ to $(G_2,F_2)$ is an {\bf epimorphism} from $G_1$ to $G_2$, that is, a surjective continuous homomorphism, satisfying that for $N\in \CN(G_2)$, $$F_2(N)\subseteq F_1(f^{-1}[N]).$$
\end{itemize}

For sorted profinite groups $(G,F)$ and $(G',F')$, we say that a epimorphism $\varphi:G'\rightarrow G$ is {\em sorted} with respect to $F'$ and $F$ if the epimorphism $\varphi$ induces a morphism from $(G',F')$ to $(G,F)$, that is, for each $N\in \CN(G)$, $$F(N)\subseteq F'(\varphi^{-1}[N]).$$ If the sorting data $F$ and $F'$ are clear from the context, we say $\varphi$ is sorted instead of being sorted with respect to $F'$ and $F$. If there is no confusion, we write $\SPG$ for $\SPG_{\CJ}(J_{\subseteq}^*,J_{\cap}^*)$.
\end{definition}

The concept of a sorted epimorphism first appeared in \cite[Section 5]{DHL20} to study an ultraproduct of isomorphisms between Galois groups of first order structures. Note that in the category of sorted profinite groups, every morphism is surjective. To consider only epimorphisms between sorted profinite groups is natural in the view that the category of sorted profinite groups and the category of sorted complete systems are equivalent via a contravariant functor (cf. Subsection \ref{Section:sorted_complete_system}).

\begin{example}\label{ex:sorted_profinite_group} Let us come back to Example \ref{ex:typical_example_galois_gp} for a moment. For small definably closed substructures $K\subseteq K'\subset \mathfrak{C}$, we say $K'$ is a {\em regular extension} of $K$ if $K'\cap \acl(K)=K$. Let $K\subset K' \subset \mathfrak{C}$ be small definably closed substructures such that $K'$ is a regular extension of $K$. Then, the restriction map $\res:G(K')\rightarrow G(K)$ induces a morphism from $(G(K'),F')$ to $(G(K),F)$, where $F$ and $F'$ are the natural sorting data on $G(K)$ and $G(K')$ respectively. Namely, for each $N\in \CN(G(K))$ and for a finite tuple $a$ in $\acl(K)^N$, $$\dcl(a,K)=\acl(K)^N\Leftrightarrow \dcl(a,K')=\acl(K')^{\res^{-1}[N]}$$ so that $F(N)\subseteq F'(\res^{-1}[N])$ (cf. \cite[Fact 2.10]{HoLee}).
\end{example}

\bigskip

Our main goal in this subsection is to show that the category $\SPG$ is closed under taking the inverse limit and taking the fibre product. We first introduce some terminology.

\begin{notation}\label{notation:union_intersection_presortingdata}
Let $I$ be an index set. For each $i\in I$, let $F_i:=\{F_i(N)\subseteq \CJ^{<\BN}:N\in \CN(G)\}$ be a $\CN(G)$-indexed set of subsets of $\CJ^{<\BN}$.
\begin{enumerate}
	\item Let $\bigcap_{i\in I}F_i$ be a $\CN(G)$-indexed family given as follows: For each $N\in \CN(G)$, $(\bigcap_{i\in I}F_i)(N):=\bigcap_{i\in I}F_i(N)$.
	\item Let $\bigcup_{i\in I}F_i$ be a $\CN(G)$-indexed family given as follows: For each $N\in \CN(G)$, $(\bigcup_{i\in I}F_i)(N):=\bigcup_{i\in I}F_i(N)$.
\end{enumerate}
\end{notation}

\begin{remark}\label{rem:minimal_sorting_data}
\begin{enumerate}
	\item For an indexed family $\hat F=\{\hat F(N)(\neq \emptyset):N\in \CN(G)\}$, there is a unique minimal sorting data $F$ on $G$ such that for each $N\in \CN(G)$, $F(N)$ contains $\hat F(N)$, which is called {\em generated by $\hat F$}. Note that such a minimal one is given by taking intersections of all sorting data on $G$ containing $\hat F$. We call an indexed family $\hat F$ with $\hat F(N)\neq \emptyset$ for each $N\in \CN(G)$ a {\em pre-sorting data on $G$}.
	\item Let $F_i=\{F_i(N):N\in \CN(G)\}$ be a sorting data of a profinite group $G$ for each $i\in I$. If $\bigcap_{i\in I}F_i(N)\neq \emptyset$ for each $N\in \CN(G)$, both of the pre-sorting data $\bigcap F_i$ and $\bigcup_{i\in I} F_i$ are sorting data on $G$. If $I$ is finite, then $\bigcap_{i\in I}F_i(N)\neq \emptyset$ for each $N\in \CN(G)$ so that $\bigcap_{i\in I} F_i$ is a sorting data on $G$. Namely, suppose $I=\{i_1,\ldots,i_n\}$. Fix $N\in \CN(G)$. For each $i_j$, choose $J_j\in F_{i_j}(N)$. Then, $J_1^{\frown}J_2^{\frown}\cdots^{\frown}J_n$ is in $F_{i_1}(N)\cap\cdots\cap F_{i_n}(N)$ by Definition \ref{def:sorting_data}(2).
\end{enumerate} 
\end{remark}

\begin{definition}\label{def/rem:pushforward_sortingdata}
Let $\varphi:G_1\rightarrow G_2$ be an epimorphism. Let $F_1$ be a sorting data on $G_1$. Consider a pre-sorting data $\hat F_2$ on $G_2$ given by $\hat F_2(N_2):=F_1(f^{-1}[N_2])$ for $N_2\in \CN(G_2)$. Then, the pre-sorting data $\hat F_2$ is a sorting data, and we call this sorting data the {\em push-forward sorting data} of $F_1$ along $\varphi$, denoted by $\varphi_*(F_1)$. 
\end{definition}
\noindent Note that for any sorting data $F_2'$ on $G_2$, $\varphi:(G_1,F_1)\rightarrow (G_2,F_2')$ is sorted if and only if $F_2'\subseteq \varphi_*(F_1)$.

\begin{definition}\label{def:base_at_e}
Let $G$ be a profinite group and let $e$ be the identity of $G$.
\begin{enumerate}
	\item We say that a subset $\CB\subseteq \CN(G)$ is a {\em base at $e$} if for any $N\in \CN(G)$, there is $N'\in \CB$ such that $N'\subseteq N$.
	\item We say that a subset $X\subseteq \CN(G)$ {\em generates a base at $e$} if the set $\CB(X):=\{N_1\cap \cdots \cap N_k:N_i\in X\}$ forms a base at $e$, equivalently, $\bigcap X=\{e\}$. Indeed, if $X$ generates a base at $e$, then $\bigcap X=\{e\}$ because $G$ is a Hausdorff space. Conversely, suppose $\bigcap X=\{e\}$. Take $N\in \CN(G)$ arbitrary. Suppose $N_1\cap \cdots \cap N_k\not\subseteq N$ for any $N_1,\ldots,N_k\in X$. Then, by compactness, we have that $\bigcap X\cap G\setminus N\neq \emptyset$ and $e\not\in N$, which is a contradiction. So, for some $N_1,\ldots,N_k\in X$, $N_1\cap\cdots\cap N_k\subseteq N$. 
\end{enumerate}
\end{definition}

\begin{remark}\label{def/rem:generating_sording_data}
Let $G$ be a profinite group.
\begin{enumerate}
	\item Let $\CB\subseteq \CN(G)$ be a base at $e$. For each $N\in \CB$, choose $F_{\CB}(N)(\neq \emptyset)\subseteq \CJ^{<\BN}$, and put $F_{\CB}:=\{F_{\CB}(N):N\in \CB\}$, called a {\em pre-sorting data on $\CB$}. Then, there is a unique minimal sorting data $F$ such that for each $N\in \CB$, $F(N)$ contains $F_{\CB}(N)$. In this case, we say that $F$ is generated by $F_{\CB}$.
	\item Let $X\subseteq \CN(G)$ generate a base at $e$. For each $N\in X$, choose $F_X(N)(\neq \emptyset)\subseteq \CJ^{<\BN}$, and put $F_{X}:=\{F_X(N):N\in X\}$, called a {\em pre-sorting data on $X$}. Then, there is a unique minimal sorting data $F$ such that for each $N\in X$, $F(N)$ contains $F_X(N)$. In this case, we say that $F$ is generated by $F_{X}$.
\end{enumerate}
\end{remark}
\begin{proof}
$(1)$ Let $\CB\subseteq \CN(G)$ be a base at $e$ and let $F_{\CB}$ be a pre-sorting data on $\CB$. Define a pre-sorting data $\hat F$ on $G$ given as follows: For $N\in \CN(G)$,
\begin{itemize}
	\item if $N\in \CB$, put $\hat F(N):=F_{\CB}(N)$; and
	\item if $N\not\in \CB$, put $\hat F(N):=\bigcup_{N'\subseteq N}\bigcup_{k\ge [G:N']}J_{\subseteq }^*[\{k\}\times F_{\CB}(N') ]$.
\end{itemize}
Let $F$ be a sorting data of $G$ generated by $\hat F$, which exists by Remark \ref{rem:minimal_sorting_data}. Then, the sorting data $F$ is also generated by $F_{\CB}$.\\

$(2)$ Let $X\subseteq \CN(G)$ generate a base at $e$. Put $\CB:=\{N_1\cap \cdots \cap N_k:N_i\in X\}$, which is a base at $e$. Let $F_X$ be a pre-sorting data on $X$. Define a pre-sorting data $F_{\CB}$ on $\CB$ given as follows: For $N\in \CB$, put $$F_{\CB}(N):=\bigcup_{N_1,\ldots,N_k\in X, N=N_1\cap \cdots \cap N_k}\limits \{J_1^{\frown}\cdots^{\frown}J_k:J_1\in F_{X}(N_1),\ldots, J_k\in F_X(N_k)\}.$$ Let $F$ be a sorting data of $G$ generated by $F_{\CB}$, which exists by $(1)$. Then, the sorting data $F$ is also generated by $F_{X}$.
\end{proof}

\bigskip

Now, we show that the category $\SPG$ is closed under taking the inverse limit.
\begin{proposition}\label{prop:inverselimit_sorted_prof_gp}
The category $\SPG$ is closed under taking the inverse limit.
\end{proposition}
\begin{proof}
Consider an inverse system $((G_i,F_i),f_j^i:(G_i,F_i)\rightarrow (G_j,F_j))_{j\le i\in I}$ of sorted profinite groups indexed by a directed poset $(I,\le)$. Let $G$ be the inverse limit of $G_i$ in the category of profinite groups, which is a profinite group. Then, for each $i\in I$, there is an epimorphism $f_i:G\rightarrow G_i$ such that for $j\le i$, $f_j=f_j^i\circ f_i$.

We consider a pre-sorting data $\hat F$ on $G$ given as follows: Let $N\in \CN(G)$. Put $I_N:=\{i\in I:N=f_i^{-1}[f_i[N]]\}$, equivalently, $i\in I_N$ if and only if $\ker f_i\subseteq N$. Note that $I_N\neq \emptyset$. Put $$\hat F(N):=\bigcup_{i\in I_N} F_i(f_i[N]).$$ Note that $F_j(f_j[N])\subseteq F_i(f_i[N])$ for $j\le i \in I_N$.
\begin{claim}\label{claim:sorting_data_inverselimit}
The pre-sorting data $\hat F$ is a sorting data.
\end{claim}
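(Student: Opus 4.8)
The plan is to verify that $\hat F$ satisfies each of the six axioms in Definition \ref{def:sorting_data}, working throughout with the description $\hat F(N)=\bigcup_{i\in I_N}F_i(f_i[N])$ and exploiting the monotonicity remark noted just before the claim, namely that $F_j(f_j[N])\subset F_i(f_i[N])$ whenever $j\le i$ in $I_N$. This monotonicity, together with the fact that $(I,\le)$ is directed, means that the union defining $\hat F(N)$ is a directed union: for any finitely many indices in $I_N$ there is a common upper bound in $I_N$ (since $\ker f_i\subset N$ is preserved under passing to larger indices), so membership questions can always be settled inside a single $F_i$. This observation is what will make the whole verification go through, because each axiom either is already known to hold in the individual sorting data $F_i$, or reduces to a statement about finitely many tuples which can be pushed into one common $F_i$.

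Concretely, I would proceed axiom by axiom. Axiom $(1)$ is immediate: for $N=G$ we have $f_i[G]=G_i$, so $\hat F(G)=\bigcup_i F_i(G_i)=\bigcup_i\CJ^{<\BN}=\CJ^{<\BN}$. Axioms $(2)$ and $(3)$ are ``pointwise'' in the sense that they concern a single $N$ and transformations of a single tuple $J$ (passing to $\sqrt{J}$, or permuting coordinates): here I would fix a witnessing index $i\in I_N$ with $J\in F_i(f_i[N])$, apply the corresponding axiom for $F_i$, and conclude that the transformed tuple lies in $F_i(f_i[N])\subset\hat F(N)$; for the $\sqrt{J}\subset\hat F(N)\Rightarrow J\in\hat F(N)$ direction one uses that $J\in\sqrt{J}$. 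Axiom $(6)$ (invariance under inner automorphisms) is similar, but requires checking that conjugation commutes appropriately with the maps $f_i$: if $g^{-1}N_1g=N_2$ in $G$, then $f_i(g)^{-1}f_i[N_1]f_i(g)=f_i[N_2]$ in $G_i$, and one also verifies $I_{N_1}=I_{N_2}$ (since $\ker f_i$ is normal, $\ker f_i\subset N_1$ iff $\ker f_i\subset N_2$), after which invariance in each $F_i$ yields $\hat F(N_1)=\hat F(N_2)$.

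The two axioms that genuinely use directedness are $(4)$ and $(5)$. For $(5)$, given $J_1\in\hat F(N_1)$ and $J_2\in\hat F(N_2)$, I would pick witnessing indices, then use directedness of $I$ to find a single index $i$ lying in $I_{N_1}\cap I_{N_2}\cap I_{N_1\cap N_2}$ (note $\ker f_i\subset N_1\cap N_2$ iff it is contained in both), large enough that $J_1\in F_i(f_i[N_1])$ and $J_2\in F_i(f_i[N_2])$; then axiom $(5)$ for $F_i$ gives $J^{\frown}_{\cap}(J_1,J_2)\in F_i(f_i[N_1]\cap f_i[N_2])$, and one checks $f_i[N_1\cap N_2]=f_i[N_1]\cap f_i[N_2]$ for such $i$ (this last equality is where $\ker f_i\subset N_1\cap N_2$ is used). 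For $(4)$, suppose $N_1\subset N_2$ with $[G:N_1]\le k$ and $J\in\hat F(N_1)$; I would choose $i\in I_{N_1}$ witnessing $J\in F_i(f_i[N_1])$, enlarge it if necessary so that $i\in I_{N_2}$ as well, and observe that $f_i[N_1]\subset f_i[N_2]$ with $[G_i:f_i[N_1]]\le[G:N_1]\le k$; then axiom $(4)$ for $F_i$ gives $J^*_{\subset}(k,J)\in F_i(f_i[N_2])\subset\hat F(N_2)$.

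The main obstacle I anticipate is purely bookkeeping rather than conceptual: namely, keeping straight the interaction between the index sets $I_N$ and the image subgroups $f_i[N]$, and in particular verifying the set-theoretic identities $f_i[N_1\cap N_2]=f_i[N_1]\cap f_i[N_2]$ and the index inequality $[G_i:f_i[N_1]]\le[G:N_1]$, both of which hold precisely because the relevant kernels are contained in the subgroups in question (so that $f_i$ restricts to an isomorphism $G/N_1\to G_i/f_i[N_1]$ on the relevant quotients). Once these elementary profinite-group facts about the quotient maps $f_i$ are in hand, each axiom reduces cleanly to the already-established axiom for a single $F_i$, and no delicate limiting argument is needed beyond the directedness of the union.
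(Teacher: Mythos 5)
Your proposal is correct and follows essentially the same route as the paper: for axioms $(4)$ and $(5)$ the paper likewise picks a single witnessing index $i$ with $\ker f_i$ contained in the relevant subgroup(s) (using directedness and the monotonicity $F_j(f_j[N])\subset F_i(f_i[N])$ for $j\le i\in I_N$), applies the axiom for $F_i$, and uses exactly the identities $f_i[N_1\cap N_2]=f_i[N_1]\cap f_i[N_2]$ and $f_i[N_1]\subset f_i[N_2]$ that you isolate. The only difference is that the paper declares the remaining axioms $(1)$--$(3)$ and $(6)$ immediate and omits them, while you verify them explicitly (correctly, including the observation that $J\in\sqrt{J}$ handles the nontrivial direction of $(2)$ and that normality of $\ker f_i$ gives $I_{N_1}=I_{N_2}$ in $(6)$).
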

\begin{proof}
It is enough to show that $\hat F$ satisfies the conditions $(3)$ and $(4)$ in Definition \ref{def:sorting_data}. We first show that the condition $(3)$ holds for $\hat F$. Take $N_1\subseteq N_2\in \CN(G)$ and $k\in \BN$ with $[G:N_1]\le k$. Take $J\in \CJ^{<\BN}$ with $J\in \hat F(N_1)$. Then, by definition, there is $i\in I$ such that $\ker f_i\subseteq N_1$ and $J\in F_i(f_i[N_1])$. We have that $J_{\subseteq}^*(k,J)\in F_i(f_i[N_2])$ because $F_i$ is a sorting data of $G_i$ and $f_i[N_1]\subseteq f_i[N_2]\in \CN(G_i)$. Since $\ker f_i\subseteq N_1\subseteq N_2$, we have that $i\in I_{N_2}$ and $J_{\subseteq}^*(k,J)\in F_i(f_i[N_2])\subseteq \hat F(N_2)$.

Next, we show that the condition $(4)$ holds. Take $J_1\in \hat F(N_1)$ and $J_2\in \hat F(N_2)$. Take $i\in I$ such that
\begin{itemize}
	\item  $\ker f_i\subseteq N_1\cap N_2$; and
	\item $J_1\in F_i(f_i[N_1])$ and $J_2\in F_i(f_i[N_2])$.
\end{itemize}
We have that $f_i[N_1]\cap f_i[N_2]=f_i[N_1\cap N_2]$ because $\ker f_i\subseteq N_1\cap N_2$. Also, since $F_i$ is a sorting data of $G_i$, we have that $$J_{\cap}^*(J_1,J_2)\in F_i(f_i[N_1]\cap f_i[N_2])=F_i(f_i[N_1\cap N_2]).$$
\end{proof}

\begin{claim}\label{claim:inverselimit}
The sorted profinite group $(G,\hat F)$ satisfies the following universal property: Let $(G',F')$ be a sorted profinite group and let $g_i :(G',F')\rightarrow (G_i,F_i)$ be a morphism for each $i\in I$. Then, there is a morphism $g:(G',F')\rightarrow (G,F)$ such that for each $i$, $g_i=f_i\circ g$. 
\end{claim}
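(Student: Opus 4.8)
The plan is to build the map in the underlying category $\PG$ first and then promote it to a morphism of $\SPG_{\CJ}$ by checking the two extra conditions: surjectivity (every arrow of $\SPG_{\CJ}$ is an epimorphism) and sortedness. Since the family $(g_i)_{i\in I}$ is a compatible cone over the inverse system, i.e. $g_j=f_j^i\circ g_i$ for $j\le i$, the usual universal property of $G=\varprojlim G_i$ in $\PG$ furnishes a unique continuous homomorphism $g:G'\to G$ with $f_i\circ g=g_i$ for every $i$. It remains to see that this $g$ is already a morphism in $\SPG_{\CJ}$ into $(G,\hat F)$.

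Next I would verify that $g$ is an epimorphism. As $G'$ is compact and $g$ continuous, $H:=g[G']$ is a closed subgroup of $G$, and for each $i$ we have $f_i[H]=f_i[g[G']]=g_i[G']=G_i$ by surjectivity of $g_i$. Invoking the standard fact that a closed subgroup of an inverse limit which maps onto every factor is the whole group (concretely, since the kernels $\ker f_i$ form a neighbourhood base of the identity, every basic neighbourhood $x\ker f_i$ of a point $x\in G$ meets $H$, so $H=\bar H=G$), we conclude $H=G$ and hence $g$ is surjective.

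The heart of the argument is checking that $g$ is sorted, namely $\hat F(N)\subset F'(g^{-1}[N])$ for every $N\in\CN(G)$. Fix such an $N$ and take $J\in\hat F(N)$. By the definition of $\hat F$ there is $i\in I_N$ (so $\ker f_i\subset N$) with $J\in F_i(f_i[N])$. Because $g_i:(G',F')\to(G_i,F_i)$ is sorted and $f_i[N]\in\CN(G_i)$, we get $F_i(f_i[N])\subset F'(g_i^{-1}[f_i[N]])$, whence $J\in F'(g_i^{-1}[f_i[N]])$. The key bookkeeping step is then the identity $g_i^{-1}[f_i[N]]=g^{-1}[f_i^{-1}[f_i[N]]]=g^{-1}[N]$, where the final equality uses $\ker f_i\subset N$ to force $f_i^{-1}[f_i[N]]=N$. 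This yields $J\in F'(g^{-1}[N])$, as required, so $g$ is a morphism $(G',F')\to(G,\hat F)$.

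Uniqueness of $g$ comes for free, since a morphism of $\SPG_{\CJ}$ is in particular a continuous homomorphism and the underlying map is already unique in $\PG$. I expect the surjectivity step to be the main obstacle: in a category whose morphisms are arbitrary homomorphisms the canonical map into the inverse limit need not be onto, so here it is essential to use the compactness of $G'$ together with the surjectivity of each $g_i$. By contrast, the sorting condition, although it is the conceptual point of the construction of $\hat F$, reduces cleanly to the definition of $\hat F$ and the elementary identity $f_i^{-1}[f_i[N]]=N$ valid for $i\in I_N$.
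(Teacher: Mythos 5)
Your proposal is correct and follows essentially the same route as the paper: invoke the universal property of the inverse limit in the category of profinite groups, then verify sortedness via the identity $g^{-1}[N]=g_i^{-1}[f_i[N]]$ for $i\in I_N$, exactly as in the paper's proof of the claim. The only difference is that you spell out the surjectivity of $g$ (via compactness of $G'$ and the kernels $\ker f_i$ forming a base at the identity), a point the paper absorbs into the phrase ``there is a morphism $g$'' --- a worthwhile elaboration, but not a different argument.
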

\begin{proof}
Since $G$ is the inverse limit of $G_i$ in the category of profinite groups, there is a morphism $g:G'\rightarrow G$ such that for each $i$, $g_i=f_i\circ g$. It is enough to show that $g$ is a morphism in the category of $\SPG$, that is, for each $N\in \CN(G)$, $$\hat F(N)\subseteq F'(g^{-1}[N]).$$ Take $J\in \hat F(N)$. By definition, there is $i\in I$ such that $\ker f_i\subseteq N$ and $J\in F_i(f_i[N])$. Since $\ker f_i\subseteq N$ and $g_i=f_i\circ g$, we have that $$g^{-1}[N]=g^{-1}[f_i^{-1}[f_i[N]]]=g_i^{-1}[f_i[N]].$$ Since $g_i$ is a morphism in $\SPG$ and $J\in F_i(f_i[N])$, we have that  $$J\in F'(g_i^{-1}[f_i[N]])=F'(g^{-1}[N]).$$
\end{proof}
\noindent By Claim \ref{claim:inverselimit}, the sorted profinite group $(G,\hat F)$ is the inverse limit in the category $\SPG$.
\end{proof}

\bigskip

Next, we consider a notion of the fibre product in the category $\SPG$. 
\begin{definition}\label{def:fibre_product}
Let $\Pi_1:(B_1,F_1)\rightarrow (A,F_A)$ and $\Pi_2:(B_2,F_2)\rightarrow (A,F_A)$ be morphisms of sorted profinite groups so that they are epimorphisms. The {\em fibre product} of $B_1$ and $B_2$ over $A$ with respect to $\Pi_1$ and $\Pi_2$ is the following sorted profinite group $(B,F)$:
\begin{itemize}
	\item $B=B_1\times_A B_2=\{(b_1,b_2)\in B_1\times B_2:\Pi_1(b_1)=\Pi_2(b_2)\}$, which is the fibre product in the category of profinite groups; and
	\item Let $p_1:B\rightarrow B_1$ and $p_2:B\rightarrow B_2$ be the projection maps. Let $X=\{p_1^{-1}[N_1]:N_1\in \CN(B_1)\}\cup\{p_2^{-1}[N_2]:N_2\in \CN(B_2)\}$, which generates a base at $e$ because $\bigcap X=\{(e_1,e_2)\}$ where $e_1$ and $e_2$ are the identities of $B_1$ and $B_2$ respectively. Let $F_X$ be a pre-sorting data on $X$ given as follows: For $N_1\in \CN(G_1)$ and $N_2\in \CN(G_2)$, $$F_X(p_1^{-1}[N_1])=F_1(N_1),\ F_X(p_2^{-1}[N_2])=F_2(N_2).$$ Let $F$ be the sorting data generated by $F_X$.
\end{itemize}
The sorting data $F$ is the minimal one among sorting data $F'$ on $B$ to make $p_1$ and $p_2$ morphisms in the category $\SPG$, that is, the sorting data $F$ makes the projection maps $p_1$ and $p_2$ sorted and for any such sorting data $F'$, $F\subseteq F'$. 
\end{definition}
In the category of profinite groups, the fibre product is characterized by the following properties:
\begin{remark}\label{rem:char_fired_prod}\cite[Lemma 1.1]{HL}
Consider a commutative diagram of groups with epimorphisms  in the category of sorted profinite groups: 
$$
\begin{tikzcd}
B \arrow[d, "p_1"'] \arrow[r, "p_2"] & B_2 \arrow[d, "\Pi_2"] \\
B_1 \arrow[r, "\Pi_1"']& A
\end{tikzcd}
$$
and put $p=\Pi_1\circ p_1=\Pi_2\circ p_2$. The following are equivalent:
\begin{enumerate}
	\item $B$ is isomorphic to the fibre product of $B_1$ and $B_2$ over $A$.
	\item $B$ with $p_1$ and $p_2$ is a pullback of the pair $(\Pi_1,\Pi_2)$, that is, for any morphisms $\psi_i:C\rightarrow B_i$ for $i=1,2$ with $\Pi_1\circ \psi_1=\Pi_2\circ \psi_2$, there is a unique morphism $\psi:C\rightarrow B$ such that $p_i\circ \psi=\psi_i$ for $i=1,2$.
$$
\begin{tikzcd}
C
\arrow[dr, "\psi"]
\arrow[drr, bend left, "\psi_2"]
\arrow[ddr, bend right, "\psi_1"'] & & \\
& B \arrow[d, "p_1"'] \arrow[r, "p_2"] & B_2 \arrow[d, "\Pi_2"]\\
& B_1 \arrow[r, "\Pi_1"'] & A
\end{tikzcd}
$$
	\item $\ker p_1\cap \ker p_2=\{e\}$, and $A$ with $\Pi_1, \Pi_2$ is a  pushout of the pair $(p_1,p_2)$, that is, for any morphism $\varphi_i:B_i\rightarrow G$ for $i=1,2$ with $\varphi_1\circ p_1=\varphi_2\circ p_2$, there is a unique morphism $\varphi:A\rightarrow G$ such that $\varphi \circ \Pi_i=\varphi_i$ for $i=1,2$.
$$
\begin{tikzcd}
B \arrow[d, "p_1"'] \arrow[r, "p_2"] & B_2 \arrow[d, "\Pi_2"] \arrow[ddr, bend left, "\varphi_2"] & \\
B_1 \arrow[r, "\Pi_1"'] \arrow[drr, bend right, "\varphi_1"'] & A \arrow[dr, "\varphi"] & \\
& & G
\end{tikzcd}
$$
	\item $\ker p=\ker p_1 \times \ker p_2$.
\end{enumerate}
Note the universal property of $(2)$ does not make sense in the category $\SPG$ because the map $\psi$ need not be surjective even though both $\psi_1$ and $\psi_2$ are surjective.
\end{remark}

We borrow a notion of a cartesian diagram in the category of profinite groups from \cite[p. 185]{HL}.
\begin{definition}\label{def:cartesian_diagram}
We say that a diagram of sorted profinite groups in the category $\SPG$
$$
\begin{tikzcd}
(B,F) \arrow[d, "p_1"'] \arrow[r, "p_2"] & (B_2,F_2) \arrow[d, "\Pi_2"] \\
(B_1,F_1) \arrow[r, "\Pi_1"']& (A, F_A)
\end{tikzcd}
$$
is called {\em cartesian} if $(B,F)$ is isomorphic to the fibre product of $(B_1,F_1)$ and $(B_2,F_2)$ over $(A,F_A)$.
\end{definition}
\noindent We have the following analogue to \cite[Lemma 1.2]{HL}.

\begin{remark}\label{rem:inducing_fibre_prod}
Let $\psi_i:(C,F_C)\rightarrow (B_i,F_i)$ be a morphism for $i=1,2$. Then there is a commutative diagram:
$$
\begin{tikzcd}
(C,F_C)
\arrow[dr, "\psi"]
\arrow[drr, bend left, "\psi_2"]
\arrow[ddr, bend right, "\psi_1"'] & & \\
& (B,F) \arrow[d, "p_1"'] \arrow[r, "p_2"] & (B_2,F_2) \arrow[d, "\Pi_2"]\\
& (B_1,F_1) \arrow[r, "\Pi_1"'] & (A,F_A)
\end{tikzcd}
$$
where the square is cartesian and $\psi$ is a morphism.
\end{remark}
\begin{proof}
By \cite[Lemma 1.2]{HL}, there is a unique commutative digram 
$$
\begin{tikzcd}
C
\arrow[dr, "\psi"]
\arrow[drr, bend left, "\psi_2"]
\arrow[ddr, bend right, "\psi_1"'] & & \\
& B \arrow[d, "p_1"'] \arrow[r, "p_2"] & B_2 \arrow[d, "\Pi_2"]\\
& B_1 \arrow[r, "\Pi_1"'] & A
\end{tikzcd}
$$
such that $B$ is the fibre product of $B_1$ and $B_2$ over $A$ with an epimorphism $\psi$ in the category of profinite groups. Let $F_A$ be an arbitrary sorting data on $A$ such that the epimorphism $\Pi_i:(B_i,F_i)\rightarrow (A,F_A)$ is sorted for each $i=1,2$. Take the sorting data $F$ such that $(B,F)$ is the fibre product of $(B_1,F_1)$ and $(B_2,F_2)$ over $(A,F_A)$ so that the square is cartesian in the category $\SPG$. Note that the sorting data $F$ depends only on two morphisms $p_1$ and $p_2$.

It remains to show that the surjective homomorphism $\psi:(C,F_C)\rightarrow (B,F)$ is sorted. Put $X=\{p_1^{-1}[N_1]:N_1\in\CN(B_1)\}\cup\{p_2^{-1}[N_2]:N_2\in \CN(B_2)\}$. Take $N_1\in \CN(B_1)$ and $J_1\in F_1(N_1)$. Then, we have that $$J_1\in F_C(\psi_1^{-1}[N_1])=F_C(\psi^{-1}[p_1^{-1}[N_1]])$$ because $\psi_1=p_1\circ \psi$. The same things holds for $N_2\in \CN(B_2)$ and $J_2\in F_2(N_2)$. Thus, by the minimality of $F$, we have that $F[N]\subseteq F_C(\psi^{-1}[N])(=\psi_*(F_C)[N])$ for any $N\in X$ and so $F\subseteq \psi_*(F_C)$. So, by Remark \ref{def/rem:pushforward_sortingdata}(1), the epimorphism $\psi:(C,F_C)\rightarrow (B,F)$ is sorted.
\end{proof}

\subsection{Sorted embedding property}
We introduce the {\em sorted embedding property} for sorted profinite groups, analogous to the embedding property for profinite groups in \cite{HL} (or also called the Iwasawa property in \cite{C1}). We start with the definition of {\em embedding condition} from \cite[p. 185]{HL}. Fix a set $\CJ$. Throughout this section, a sorted profinite group means a sorted profinite group in $\SPG$. Let $(G,F)$ be a sorted profinite group. For a pair $((A,F_A),(B,F_B))$ of sorted profinite groups, the {\em sorted embedding condition}, denoted by $\Emb_{(G,F)}((A,F_A),(B,F_B))$, is defined as follows: If $(A,F_A)$ is a quotient of $(G,F)$, then for every pair of morphisms $\Pi:(A,F_A)\rightarrow (B,F_B)$ and $\varphi:(G,F)\rightarrow (B,F_B)$, there is a morphism $\psi:(G,F)\rightarrow (A,F_A)$ such that $\Pi\circ \psi=\varphi$. Let $\SIM(G,F)$ be the set of isomorphism classes of sorted finite quotients of $(G,F)$. 
\begin{definition}\label{def:Iwasawa_property}
We say that a sorted profinite group $(G,F)$ satisfies the {\em sorted embedding property} (SEP) if for all $(A,F_A),(B,F_B)\in \SIM(G,F)$, the condition $\Emb_{(G,F)}((A,F_A),(B,F_B))$ holds.
\end{definition}

Also, we introduce the {\em finitely sorted embedding property} (FSEP), which is weaker than SEP, but they will turn out to be equivalent in Theorem \ref{thm:FSIP=SIP=IP+homogeneous_sorting_data}. The advantage of FSEP is that it can be first order axiomatizable in the language of sorted complete systems. We say that a sorting data $F$ on a profinite group $G$ is {\em finitely generated} if there are a subset $X$ of $\CN(G)$ generating a base at $e$ and a pre-sorting data $F_{X}$ such that
	\begin{itemize}
		\item $F_X$ generates $F$; and
		\item $F_X(N)$ is finite for each $N\in X$.
	\end{itemize}
In this case, we say that $(G,F)$ is {\em finitely sorted}. Let $\FSIM(G,F)$ be the set of isomorphism classes of finitely sorted finite quotients of $(G,F)$. Clearly we have that $\FSIM(G,F)\subset \SIM(G,F)$.

\begin{definition}\label{def:fin_sorted_Iwasawa_property}
We say that a sorted profinite group $(G,F)$ satisfies the {\em finitely sorted embedding property} (FSEP) if for all $(A,F_A),(B,F_B)\in \FSIM(G,F)$, the condition $\Emb_{(G,F)}((A,F_A),(B,F_B))$ holds.
\end{definition}

\begin{example}\label{ex:sorted_profinite_group_having_SEP}
\begin{enumerate}
	\item If $G$ is a profinite group having the embedding property, then the sorted profinite group $(G,F)$ has SEP where $F$ is the full sorting data on $G$. For example, the free profinite group has the embedding property.
	
In general, for any profinite group $H$, there is an epimorphism $p:G\rightarrow H$ such that $G$ has the embedding property. In this case, we call such a $G$ an {\em embedding cover}. We say an embedding cover $G$ of $H$ is {\em universal} if for any embedding cover $G'$ of $H$ and any epimorphisms $p:G\rightarrow H$ and $p':G'\rightarrow H$, there is an epimorphism $q:G'\rightarrow G$ such that $p'=p\circ q$. Moreover, any profinite group has a unique universal embedding cover and any finite group has a unique finite universal embedding cover (see \cite[Theorem 2.7]{C1} and \cite[Theorem 1.12]{HL}).
	\item Let $\CJ=\{s_1,s_2\}$. Put $J_{\subseteq}^*:\BN\times \CJ^{<\BN}\rightarrow \CJ^{<\BN}, (k,J)\mapsto J^{\frown}(s_1,s_2)$. Let $G=\BZ/2\BZ\times \BZ/2\BZ$, which has the embedding property as a profinite group. Then $$\CN(G)=\{0, G,N_{(1,1)}, N_{(1,0)}, N_{(0,1)} \}$$ where $N_a$ is the subgroup of $G$ generated by $a$ for $a\in G$. Define a sorting data $F$ on $G$ as follows:
	\begin{itemize}
		\item $F(G)=\CJ^{<\BN}$;
		\item $F(N_{(1,0)})=F(N_{(0,1)})=F(N_{(1,1)})=\{J\in \CJ^{<\BN}:s_1\in ||J||\}$;
		\item $F(0)=\{J\in \CJ^{<\BN}:s_1,s_2\in ||J||\}$.
	\end{itemize}
Then, $(G,F)$ has SEP.	
\end{enumerate}
  
\end{example}

Now, we show that the weaker notion of FSEP is actually equivalent to the notion of SEP.
%\begin{lemma}\label{lem:sortingdata_lifting_under_FSIP}
%Suppose $(G,F)$ satisfies FSEP. Let $(B,F_B)\in \SIM(G,F)$. Take $F_B'\subseteq F_B$ finitely generated so that $(B,F_B')\in \FSIM(G,F)$. Let $\pi:(G,F)\rightarrow (B,F_B')$ be a morphism. Then, $F_B\subseteq \pi_*(F)$. 
%\end{lemma}
%\begin{proof}
%For a contradiction, suppose $F_B\not\subseteq \pi_*(F)$. Then, there are $N\in \CN(B)$ and $J_N\in F_B(N)\setminus \pi_*(F)(N)$. Let $\hat F_B$ be a pre-sorting data given as follows: For $N'\in \CN(B)$,
%$$\hat F_B(N'):=\begin{cases}
%F_B'(N') & N'\neq N\\
%F_B'(N')\cup \{J_N\} & N'=N
%\end{cases}.$$
%Let $F_B''$ be the sorting data generated by $\hat F_B$. Then, clearly $(B,F_B'')\in \FSIM(G,F)$. Since $(G,F)$ has FSEP, we have the following diagram:
%$$
%\begin{tikzcd}
%& (G,F) \arrow[dl, dashrightarrow, "\pi"'] \arrow[d, "\pi"]\\
%(B,F_B'') \arrow[r, "\id"'] & (B,F_B')
%\end{tikzcd}
%$$
%so that $F_B''\subseteq \pi_*(F)$ and $J_N\in \pi_*(F)(N)$, a contradiction.
%\end{proof}

\begin{lemma}\label{lem:descritpion_pushforward_sortingdata_for_isoimage}
Let $(G,F)$ be a sorted profinite group having FSEP and let $N,N'\in \CN(G)$ with $G/N\cong G/N'$. Then, $F(N)=F(N')$
\end{lemma}
\begin{proof}
Take $N,N'\in \CN(G)$ with $\varphi:G/N'\cong G/N$. For a contradiction, suppose $F(N)\neq F(N')$. Without loss of generality, we may assume that there is $J'\in F(N')\setminus F(N)$. Let $A:=G/N$, $A':=G/N'$, and let $\pi:G\rightarrow A$ and $\pi':G\rightarrow A'$ be the  projections from $G$ to $A$ and $A'$. For each $\tilde N\in \CN(A)$, choose $$\tilde J\in F(\pi^{-1}[\tilde N])\cap F((\pi')^{-1}\circ \varphi^{-1}[\tilde N]).$$ Consider a pre-sorting data on $A$ and $A'$ given as follows: For $\tilde N\in \CN(A)$,
\begin{align*}
\hat F_A(\tilde N)&:=\{\tilde J\},\\
\hat F_{A'}(\varphi^{-1}[\tilde N])&:=\begin{cases}
\{\tilde J\}&\varphi^{-1}[\tilde N]\neq \{e_{A'}\},\\
\{\tilde J,J'\}&\varphi^{-1}[\tilde N]= \{e_{A'}\}.
\end{cases}
\end{align*}
Let $F_A$ and $F_{A'}$ be the sorting data generated by $\hat F_A$ and $\hat F_{A'}$ respectively. Then, clearly $(A,F_A),(A',F_{A'})\in \FSIM(G,F)$. Since $(G,F)$ has FSEP, we have the following diagram: 
$$
\begin{tikzcd}
& (G,F) \arrow[dl, dashrightarrow, "\psi"'] \arrow[d, "\pi"]\\
(A',F_{A'}) \arrow[r, "\varphi"'] & (A,F_A)
\end{tikzcd}
$$
for a morphism $\psi:(G,F)\rightarrow (A',F_{A'})$. So, we have that
\begin{align*}
J'\in F_{A'}(\{e_{A'}\})&\subseteq F(\psi^{-1}[\{e_{A'}\}])\\
&=F(\psi^{-1}[\{\varphi^{-1}(e_A)\}])\\
&=F(\pi^{-1}[\{e_A\}])\\
&=F(N)
\end{align*}
which contradicts the assumption that $J'\notin F(N)$.
\end{proof}

\begin{theorem}\label{thm:FSIP=SIP=IP+homogeneous_sorting_data}
For a sorted profinite group $(G,F)$, the following are equivalent:
\begin{enumerate}
	\item $(G,F)$ has SEP.
	\item $(G,F)$ has FSEP.
	\item $G$ has EP and for all $N,N'\in \CN(G)$ with $G/N\cong G/N'$, $F(N)=F(N')$
\end{enumerate}
\end{theorem}
\begin{proof}
It is clear that $(1)\Rightarrow (2)$ by definition and $(2)\Rightarrow (3)$ by Lemma \ref{lem:descritpion_pushforward_sortingdata_for_isoimage}. It is enough to show that $(3)\Rightarrow (1)$.

Take $(A,F_A)$ and $(B,F_B)$ in $\SIM(G,F)$, and take two morphisms $\pi_A:(G,F)\rightarrow (A,F_A)$ and $\pi:(B,F_B)\rightarrow (A,F_A)$ arbitrary. Since $G$ has EP, there is an epimorphism $\pi_B':G\rightarrow B$ such that  
$$
\begin{tikzcd}
& G \arrow[dl, dashrightarrow, "\pi_B'"'] \arrow[d, "\pi_A"]\\
B \arrow[r, "\pi"'] & A
\end{tikzcd}
$$
It is enough to show that $\pi_B'$ is sorted, that is, for each $N_B\in \CN(B)$, $$F_B(N_B)\subseteq F((\pi_B')^{-1}[N_B]).$$ Since $(B,F_B)\in \SIM(G,F)$, there is a morphism $\pi_B:(G,F)\rightarrow (B,F_B)$. Take $N_B\in \CN(B)$ arbitrary. Then, we have
$$G/(\pi_B')^{-1}[N_B]\cong B/N_B\cong G/\pi_B^{-1}[N_B],$$ and $F_B(N_B)\subseteq F(\pi_B^{-1}[N_B])$. So, by Lemma \ref{lem:descritpion_pushforward_sortingdata_for_isoimage}, 
$$F_B(N_B)\subseteq F(\pi_B^{-1}[N_B])= F((\pi_B')^{-1}[N_B]).$$

\end{proof}

\subsection{Sorted complete system}\label{Section:sorted_complete_system}
We recall the notion of {\em sorted complete system} from \cite[Section 3.2]{HoLee}. The sorted complete system of a sorted profinite group is a first order structure to encode the inverse system of finite quotients of a sorted profinite group by its open normal subgroups as the complete system of a profinite group encodes the inverse system of finite quotients of the profinite group by its open normal subgroups.

For each sorted profinite group in the category $\SPG$ , we associate a dual object, called a {\em sorted complete system}. Consider the following first order language $\CL_{SCS}(\CJ)$ with the sorts $m(k,J)$ for each $(k,J)\in \BN\times \CJ^{<\BN}$ together with
\begin{itemize}
	\item a family of binary relations $\le_{k,k',J,J'}$ and $C_{k,k',J,J'}$; and
	\item a family of ternary relations $P_{k,J}$.
\end{itemize}
For a sorted profinite group $(G,F)$, the sorted complete system $\CS(G,F)$ is a $\CL_{SCS}(\CJ)$-structure given as follows:
\begin{itemize}
	\item For $(k,J)\in \BN\times \CJ^{<\BN}$, $$m(k,J):=\bigcup_{N\in\CN(G),[G:N]\le k,J\in F(N)} G/N\times \{k\}.$$
	\item For $(k,J),(k',J')\in \BN\times \CJ^{<\BN}$,
 $$\le_{k,k',J,J'}:=\{\left((gN,k),(g'N',k')\right)\in m(k,J)\times m(k',J'):k\ge k',N\subseteq N'\}.$$
 	\item For $(k,J),(k',J')\in \BN\times \CJ^{<\BN}$,
	$$C_{k,k',J,J'}:=\{\left((gN,k),(g'N',k')\right)\in m(k,J)\times m(k',J'):k\ge k',gN\subseteq g'N'\}.$$

	\item For $(k,J)\in \BN\times \CJ^{<\BN}$,
	$$P_{k,J}=\{ \left((g_1N,k),(g_2N,k),(g_3N,k) \right)\in m(k,J)^3:g_3N=g_1g_2N\}.$$
\end{itemize}
If there is no confusion, we omit the subscripts and write $\le$ ,$C$, and $P$. We also write $gN$ for $(gN,k)$. Sorted complete systems are axiomatized by a $\CL_{SCS}(\CJ)$-theory, $SCS$ in \cite[Definition 3.7]{HoLee}.

Conversely, any model $S$ of $SCS$ is a sorted complete system of a sorted profinite group, denoted by $(G(S),F(S))$. Let $\sim$ be the equivalence relation on $S$ given as follows: For $a,b\in S$, $$a\sim b\Leftrightarrow a\le b\wedge b\le a.$$ For $a\in S$, let $[a]$ be the $\sim$-class of $a$. Then, for each $a\in m(k,J)$, $[a]\cap m(k,J)$ forms a group whose group operation is induced from $P$. The profinite group $G(S)$ is the inverse limit of the group $[a]\cap m(k,J)$ with the transition maps induced from $C$. Note that for each $N\in \CN(G(S))$, there is $a\in m(k,J)$ such that $N$ is the kernel of the projection from $G(S)$ to $[a]\cap m(k,J)$. In this case, we denote $N$ by $N_a$. We now associate the sorting data $F(S)$ on $G(S)$ as follows: For $N\in \CN(G(S))$ and $J\in \CJ^{<\BN}$, $$J\in F(S)(N)\Leftrightarrow\exists a\in m(k,J)(N=N_a).$$ Then, the sorted complete system of $(G(S),F(S))$ is naturally isomorphic to $S$. We write $\CG(S)$ for $(G(S),F(S))$.

Moreover, the associations $\mathcal S$ and $\CG$ define contravariant functors to make the category $\SPG$ of sorted profinite groups and the category of sorted complete systems whose morphisms are $\CL_{SCS}(\CJ)$-embeddings equivalent. For more detailed information, see \cite[Section 3.2]{HoLee}.

If necessary, we write $m(k,J)(S)$ for $m(k,J)$ to emphasize the sort $m(k,J)$ in a model $S$ of $SCS$.

\section{Universal SEP-cover}\label{Section:universal_SIP_cover}
Since the category $\SPG$ is closed under taking the inverse limit and the fibre product, we can transfer many arguments for profinite groups in \cite[Section 1]{HL} and in \cite[Section 24.4]{FJ} into the case of sorted profinite groups after modifying several notions properly. In this section, we aim to show that any sorted profinite group $(G,F)$ has a universal SEP-cover, generalizing \cite[Theorem 1.12]{HL} and \cite[Proposition 24.4.5]{FJ}.
\begin{definition}\label{def:SIP_cover}
We say that a morphism $p:(H,F_H)\rightarrow (G,F)$ is a {\em SEP-cover} if $(H,F_H)$ has SEP.
\end{definition}

\begin{definition}\label{def:universal_SIP_cover}
Let $(G,F)$ be a sorted profinite group. A {\em universal SEP-cover} $p:(H,F_H)\rightarrow (G,F)$ is a SEP-cover satisfying the following property: For any SEP-cover $r:(H',F_{H'})\rightarrow (G,F)$, there is a morphism $q:(H',F_{H'})\rightarrow (H,F_{H})$ such that $p\circ q=r$.
\end{definition}

\begin{remark}\label{rem:reduct_universal_cover}
Let $p:(G',F')\rightarrow (G,F)$ be a universal SEP-cover of a sorted profinite group $(G,F)$. Then, $p:G'\rightarrow G$ is the universal embedding cover of $G$ in the category of profinite groups. Namely, let $q:G''\rightarrow G$ be an embedding cover. Let $F''$ be the full sorting data on $G''$. Then, $q:(G'',F'')\rightarrow (G,F)$ is a SEP-cover. Since $p:(G',F')\rightarrow (G,F)$ is the universal SEP-cover, there is a morphism $r:(G'',F'')\rightarrow (G',F')$ such that $q=p\circ r:(G'',F'')\rightarrow (G,F)$, which implies $q=p\circ r:G''\rightarrow G$. Thus, $p:G'\rightarrow G$ is the universal embedding cover of $G$.
\end{remark}

Before showing that any sorted profinite group has a universal SEP-cover, we first introduce two posets $\CP$ and $\CH$ (see \cite[p. 188]{HL} or \cite[Section 24.4]{FJ}). Let $(G_1,F_1)$ and $(G_2,F_2)$ be sorted profinite groups. We consider the following class of pairs of morphisms with common images:
\begin{align*}
\CP&:=\CP((G_1,F_1),(G_2,F_2))\\
&=\{(\Pi_1,\Pi_2):\Pi_1:(G_1,F_1)\rightarrow (A,F_A),\Pi_2:(G_2,F_2)\rightarrow (A,F_A)\}.
\end{align*}
and define a pre-order $\le$ on $\CP$ as follows: For $(\Pi_1,\Pi_2), (\Pi_1',\Pi_2')\in \CP$, we write $(\Pi_1,\Pi_2)\le (\Pi_1',\Pi_2')$ if there is a morphism $\Pi:(A',F_{A'})\rightarrow (A,F_A)$ such that the following diagram is commutative:
$$
\begin{tikzcd}
(G_1,F_1) \arrow[dr, "\Pi_1'"] \arrow[ddr, "\Pi_1"'] &  & (G_2,F_2) \arrow[dl, "\Pi_2'"'] \arrow[ddl, "\Pi_2"]\\
& (A',F_{A'}) \arrow[d, "\Pi"]& \\
& (A,F_A)&
\end{tikzcd}
$$
We write $(\Pi_1,\Pi_2)\approx (\Pi_1',\Pi_2')$ if $(\Pi_1,\Pi_2)\le (\Pi_1',\Pi_2')$ and $(\Pi_1,\Pi_2)\ge (\Pi_1',\Pi_2')$. Then, the relation $\approx$ is an equivalence relation on $\CP$ and $\le$ gives a partial order, still denoted by $\le$, on the quotient set $\CP/\approx$.
\begin{remark}
$(\Pi_1,\Pi_2)\approx (\Pi_1',\Pi_2')$ if and only if $\Pi$ is an isomorphism.
\end{remark}
\begin{proof}
It is enough to show that the left-to-right implication holds. Suppose $(\Pi_1,\Pi_2)\approx (\Pi_1',\Pi_2')$ and $\Pi$ is not an isomorphism. First, note that $\Pi$ is bijective. Let $F:=\Pi_*(F_{A'})$ be the push-forward sorting data on $A$. Since $\Pi$ is not an isomorphism, $F_A\subset F$. Put $\Pi_1'':=\Pi\circ \Pi_1':(G_1,F_1)\rightarrow (A,F)$ and $\Pi_2'':=\Pi\circ \Pi_2'$. Since $\Pi:(A',F_{A'})\rightarrow (A,F)$ is an isomorphism, $$(\Pi_1,\Pi_2)\approx(\Pi_1',\Pi_2')\approx (\Pi_1'',\Pi_2'').$$ So, there is a morphism $\Pi':(A,F_A)\rightarrow (A,F)$ such that $\Pi'\circ \Pi_1=\Pi_1$ and $\Pi'\circ \Pi_2=\Pi_2$, which implies that $\Pi'$ is the identity map. Thus, we have that $F\subseteq F_A$, a contradiction. 
\end{proof}

We introduce a dual notion to $\CP$. Let $(G_1,F_1)\times (G_2,F_2)$ be the fibre product of $(G_1,F_1)$ and $(G_2,F_2)$ over the trivial group. Let $p_i:(G_1,F_1)\times (G_2,F_2)\rightarrow (G_i,F_i)$ for $i=1,2$ be the canonical projection. Put
\begin{align*}
\CH&:=\CH((G_1,F_1),(G_2,F_2))\\
&=\{\left((H,F_H),\Pi_1,\Pi_2\right):H\le G_1\times G_2, p_i(H)=G_i, i=1,2\}
\end{align*}
such that 
\begin{itemize}
	\item $p_i:(H_,F_H)\rightarrow (G_i,F_i)$ is a morphism for $i=1,2$;
	\item $(\Pi_1,\Pi_2)\in \CP$ with common a image $(A,F_A)$;	
	\item The following diagram is cartesian:
$$
\begin{tikzcd}
(H,F_H) \arrow[d, "p_1"'] \arrow[r, "p_2"]& (G_2,F_2) \arrow[d, "\Pi_2"]\\
(G_1,F_1) \arrow[r, "\Pi_1"']& (A,F_A)
\end{tikzcd}.
$$
\end{itemize}
By Remark \ref{rem:inducing_fibre_prod}, $\CH$ is not empty. We define a pre-order $\le'$ on $\CH$ as follows: $$\left((H,F_H),\Pi_1,\Pi_2\right)\le' \left((H',F_{H'}),\Pi_1',\Pi_2'\right)$$  if 
\begin{itemize}
	\item $(\Pi_1',\Pi_2')\le (\Pi_1,\Pi_2)$;
	\item $H\subseteq H'$.
\end{itemize}
Note that $F_H=F_{H'}$ if $H=H'$ because $H$ and $H'$ are fibre products. We write $\left((H,F_H),\Pi_1,\Pi_2\right)\approx' \left((H',F_{H'}),\Pi_1',\Pi_2'\right)$ if $H=H'$ and $(\Pi_1,\Pi_2)\approx (\Pi_1',\Pi_2')$, that is, $((H,F_H),\Pi_1,\Pi_2)\le' ((H',F_{H'}),\Pi_1',\Pi_2')$ and $((H',F_{H'}),\Pi_1',\Pi_2')\le' ((H,F_H),\Pi_1,\Pi_2)$. Then, the relation $\approx'$ is an equivalence relation on $\CH$ and $\le'$ gives a partial order on the quotient set $\CH/\approx'$.

Now we define a map $T:\CP\rightarrow \CH$ given as follows: For $(\Pi_1,\Pi_2)\in \CP$ with $(A,F_A)=\Im\Pi_1=\Im\Pi_2$, let $$T(\Pi_1,\Pi_2):=\left((G_1,F_1)\times_{(A,F_A)}(G_2,F_2), \Pi_1,\Pi_2\right).$$ The map $T$ induces a map from $\CP/\approx$ to $\CH/\approx'$. By abusing notation, we denote $\CP/\approx$, $\CH/\approx'$, and the induced map by $\CP$, $\CH$, and $T$ respectively. Note that the map $T$ is an order-reversing injection by definition. Then, we have a result analogous to \cite[Lemma 1.7]{HL} using Remark \ref{rem:inducing_fibre_prod}.
\begin{lemma}\label{lem:maximal_to_minimal}\cite[Lemma 1.7]{HL}
The map $T$ induces an order-reversing bijection between two posets $\CP/\approx$ and $\CH/\approx'$.
\end{lemma}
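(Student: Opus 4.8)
The plan is to show that the order-reversing injection $T:\CP/\approx\;\rightarrow\;\CH/\approx'$ is in fact a bijection by exhibiting an explicit inverse, namely the ``pushout'' construction dual to the fibre product. Since $T$ is already established to be an order-reversing injection, it suffices to prove that $T$ is surjective; the fact that an order-reversing injection with an order-reversing inverse is a bijection of posets is then automatic.

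First I would take an arbitrary element $\left((H,F_H),\Pi_1,\Pi_2\right)\in\CH$, so that $H\le G_1\times G_2$, $p_i(H)=G_i$ for $i=1,2$, and the square over $(A,F_A)$ with $(\Pi_1,\Pi_2)\in\CP$ is cartesian. My goal is to recover $(\Pi_1,\Pi_2)$ from the subgroup $H$ alone, up to $\approx$, so that $T$ applied to the recovered pair returns the class of $\left((H,F_H),\Pi_1,\Pi_2\right)$. The key observation, borrowed from \cite[Lemma 1.7]{HL}, is that on the profinite-group level the cartesian condition is equivalent to $\ker p_1\cap\ker p_2=\{e\}$ together with $(A,\Pi_1,\Pi_2)$ being a pushout of $(p_1,p_2)$ (Remark \ref{rem:char_fired_prod}(3)). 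Thus from $H$ I would reconstruct $A$ as the pushout of the two projection restrictions $p_i\restriction_H:H\rightarrow G_i$, equivalently as the quotient $G_1\times G_2\,/\,\langle\langle H,\,\{e\}\times\ker p_2,\,\ker p_1\times\{e\}\rangle\rangle$ in the appropriate sense, with $\Pi_i$ the induced maps. This shows $T$ is surjective onto the underlying profinite data.

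The remaining and main point is the bookkeeping of sorting data, which is where the sorted setting differs from \cite{HL}. On the reconstructed common quotient $A$ I must specify a sorting data $F_A$ and check it is the right one. Here I would invoke the minimality clause of Definition/Remark \ref{def/rem:fibre_product}: the sorting data $F_H$ on the fibre product is, by construction, the minimal sorting data making both projections $p_1,p_2$ sorted. Conversely, given $(H,F_H)$ arising in $\CH$, the square being cartesian forces $F_H$ to equal the fibre-product sorting data determined by $\Pi_1,\Pi_2$; and since $F_A$ is constrained (via Definition \ref{def:category_sorted_profinite_group}) to be any sorting data making $\Pi_1,\Pi_2$ sorted, two choices $(\Pi_1,\Pi_2)$ and $(\Pi_1',\Pi_2')$ mapping to $\approx'$-equivalent elements of $\CH$ must have isomorphic $(A,F_A)$, i.e.\ they are $\approx$-equivalent in $\CP$. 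This is precisely the content guaranteeing that $T$ is injective on classes and that its set-theoretic inverse respects $\approx$ and $\approx'$.

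Assembling these, I would argue: $T$ is order-reversing and injective (given); $T$ is surjective by the pushout reconstruction above; and the reconstruction is order-reversing because enlarging $H$ (i.e.\ moving up in $\le'$) forces the pushout $A$ to become a smaller quotient (moving down in $\le$), by the universal property of the pushout together with Remark \ref{rem:char_fired_prod}(3). Hence $T$ is an order-reversing bijection. The step I expect to be the main obstacle is verifying that the reconstructed pushout data genuinely carries a well-defined sorting data $F_A$ for which the cartesian square returns $F_H$ exactly; the sorting data on $A$ is not canonical in isolation (any $F_A\subset(\Pi_i)_*(F_i)$ works at the profinite level), so I must argue carefully that the $\approx$-class, which only remembers $A$ up to the isomorphism class of its sorting data, is nonetheless uniquely pinned down by $H$ via minimality in Definition/Remark \ref{def/rem:fibre_product}.
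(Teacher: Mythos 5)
Your profinite-level pushout reconstruction is correct, but the step you yourself flag as the main obstacle is a genuine gap, and it cannot be repaired by the minimality clause you invoke. The fibre-product sorting data $F_H$ of Definition/Remark \ref{def/rem:fibre_product} is generated by the pre-sorting data $F_X(p_i^{-1}[N_i])=F_i(N_i)$; it therefore depends only on the subgroup $H\le G_1\times G_2$ and on $F_1,F_2$, and not on $F_A$ at all. Consequently $(H,F_H)$ cannot remember the sorting data on $A$, and the $\approx$-class of $(\Pi_1,\Pi_2)$ is \emph{not} pinned down by $(H,F_H)$. Concretely: take $G_1=G_2=A=G$, $\Pi_1=\Pi_2=\id$, and two sorting data $F_A\subsetneq F$ on $A$. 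Both pairs lie in $\CP$ and are $\approx$-inequivalent (the identity is a morphism $(A,F)\rightarrow (A,F_A)$ but not conversely), yet they produce literally the same sorted fibre product $(H,F_H)$, namely the diagonal with the same generated sorting data. So any attempted inverse $\CH\rightarrow\CP$ defined from $(H,F_H)$ alone is ill-defined at the sorted level; at best a canonical choice (say the maximal $F_A=(\Pi_1)_*(F_1)\cap(\Pi_2)_*(F_2)$) would hit only some $\approx'$-classes, not all of them.

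What saves the lemma --- and what the paper's formulation exploits, offering no proof beyond citing \cite[Lemma 1.7]{HL} together with Remark \ref{rem:inducing_fibre_prod} --- is that elements of $\CH$ are here \emph{triples} carrying $(\Pi_1,\Pi_2)$ with them: $\approx'$ requires $H=H'$ \emph{and} $(\Pi_1,\Pi_2)\approx(\Pi_1',\Pi_2')$, and $\le'$ likewise has the pair-order built in. With these definitions, injectivity on classes and both directions of order-reversal are definitional, and surjectivity is immediate: a triple $((H,F_H),\Pi_1,\Pi_2)\in\CH$ is, by cartesianness, $\approx'$-identified with $T(\Pi_1,\Pi_2)$ for its own pair, so no reconstruction of the pair from $H$ is ever needed. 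The pushout recovery of $(\Pi_1,\Pi_2)$ from the bare subgroup $H$ is the content of \cite[Lemma 1.7]{HL} in the \emph{unsorted} setting, and in the sorted setting it reappears only in Remark \ref{rem:extending_fibreprod_into_CH}, where the sorting data $F_{A'}$ is \emph{chosen}, not canonically recovered --- consistent with the failure above. Note also that your statement that $\approx'$-equivalent images force isomorphic $(A,F_A)$ is true, but only because $\approx'$ includes $\approx$ of the pairs by definition (plus the remark that $\approx$ forces $\Pi$ to be an isomorphism), not because of any minimality property of $F_H$.
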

%\begin{proof}
%By definition of $T$, the map $T$ is an order-reversing injection. Also, $T$ is surjective. Take $((H,F_H),\Pi_1,\Pi_2)$ in $\CH$. By the proof of \cite[Lemma 1.7]{HL} with Remark \ref{rem:inducing_fibre_prod}, we have that $((H,F_H),\Pi_1,\Pi_2)=T(\Pi_1,\Pi_2)$. 
%\end{proof}
\begin{remark}\label{rem:extending_fibreprod_into_CH}
Let $(G_1,F_1)$ and $(G_2,F_2)$ be sorted profinite groups. Let $p_i:G_1\times G_2\rightarrow G_i$ be the canonical projection for $i=1,2$. Let $H'\le H\le G_1\times G_2$ such that $p_i[H]=G_i$ and $p_i[H']=G_i$ for $i=1,2$. For any $((H,F),\Pi_1,\Pi_2)\in \CH$, there is $(\Pi_1',\Pi_2')$ in $\CP$ such that $$((H',F'),\Pi_1',\Pi_2')\le'((H,F),\Pi_1,\Pi_2),$$ where $(H',F')$ is the fibre product of $(G_1,F_1)$ and $(G_2,F_2)$ along $\Pi_1'$ and $\Pi_2'$.
\end{remark}
\begin{proof}
Since $H'\le H$, by \cite[Lemma 1.7]{HL}, there are epimorphisms $\Pi_1'$ and $\Pi_2'$ with $\Im(\Pi_1')=\Im(\Pi_2')=:A'$ and an epimorphism $\Pi:A'\rightarrow A$ such that the following diagram is commutative:
$$
\begin{tikzcd}
H' \arrow[d, "p_1"'] \arrow[r, "p_2"] & G_2 \arrow[d, "\Pi_2'"] \arrow[ddr, bend left, "\Pi_2"]& \\
G_1 \arrow[r, "\Pi_1'"'] \arrow[drr, bend right, "\Pi_1"'] & A' \arrow[dr, "\Pi"] & \\
& & A
\end{tikzcd}
$$
where the square is cartesian. Let $F_{A'}$ be a sorting data on $A'$ such that all epimorphisms $\Pi_1'$, $\Pi_2'$, and $\Pi$ are sorted. Let $F'$ be the sorting data on $H'$ such that $(H',F')$ is the fibre product of $(G_1,F_1)$ and $(G_2,F_2)$ over $(A',F_{A'})$. Then, by the choice of $F'$ and $F_{A'}$, we have that $$((H',F'),\Pi_1',\Pi_2')\le' ((H,F),\Pi_1,\Pi_2).$$
\end{proof}

\noindent Using Zorn's Lemma with the inverse limit, we have the following result:
\begin{lemma}\label{lem:existing_maximal_in_CP}\cite[Lemma 1.8]{HL}
For every $(\Pi_1,\Pi_2)\in \CP$, there is a maximal element $(\Pi_1',\Pi_2')\in \CP$ such that $(\Pi_1,\Pi_2)\le (\Pi_1',\Pi_2')$.
\end{lemma}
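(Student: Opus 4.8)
The plan is to apply Zorn's Lemma to the partially ordered set $\CP/\approx$, so the whole argument reduces to producing, for an arbitrary chain lying above $(\Pi_1,\Pi_2)$, an upper bound inside $\CP$. First I would record that $\CP/\approx$ is genuinely a set: every common image $(A,F_A)$ occurring in a pair $(\Pi_1,\Pi_2)\in\CP$ is a sorted quotient of $(G_1,F_1)$, and the isomorphism classes of quotients of a fixed profinite group (together with a sorting data from $\CJ$) form a set. Hence Zorn's Lemma is applicable once the chain condition is verified.

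So fix a chain $\{(\Pi_1^\alpha,\Pi_2^\alpha)\}_{\alpha\in D}$ in $\CP/\approx$ with all members $\ge (\Pi_1,\Pi_2)$, and write $(A^\alpha,F_{A^\alpha})$ for the common image of $\Pi_1^\alpha$ and $\Pi_2^\alpha$. Whenever $\alpha\le\beta$, the relation $(\Pi_1^\alpha,\Pi_2^\alpha)\le(\Pi_1^\beta,\Pi_2^\beta)$ supplies a morphism $\Pi^\beta_\alpha:(A^\beta,F_{A^\beta})\rightarrow(A^\alpha,F_{A^\alpha})$ with $\Pi_i^\alpha=\Pi^\beta_\alpha\circ\Pi_i^\beta$ for $i=1,2$. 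Since the $\Pi_i^\beta$ are epimorphisms, each $\Pi^\beta_\alpha$ is uniquely determined, and the same uniqueness forces the coherence identities $\Pi^\gamma_\alpha=\Pi^\beta_\alpha\circ\Pi^\gamma_\beta$. Thus $((A^\alpha,F_{A^\alpha}),\Pi^\beta_\alpha)$ is an inverse system of sorted profinite groups over the directed poset $D$.

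Next I would take $(A^\infty,\hat F):=\varprojlim(A^\alpha,F_{A^\alpha})$ in $\SPG_{\CJ}$, which exists by Remark \ref{rem:inverselimit_sorted_prof_gp}, with sorted projections $\pi_\alpha:(A^\infty,\hat F)\rightarrow(A^\alpha,F_{A^\alpha})$. The families $\{\Pi_i^\alpha\}_{\alpha\in D}$ are compatible with the transition maps, so by the universal property of the sorted inverse limit (Claim \ref{claim:inverselimit}) they induce morphisms $\Pi_i^\infty:(G_i,F_i)\rightarrow(A^\infty,\hat F)$ with $\pi_\alpha\circ\Pi_i^\infty=\Pi_i^\alpha$ for every $\alpha$. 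The pair $(\Pi_1^\infty,\Pi_2^\infty)$ then lies in $\CP$ and, through the projections $\pi_\alpha$, dominates every $(\Pi_1^\alpha,\Pi_2^\alpha)$; hence it is an upper bound of the chain (and of $(\Pi_1,\Pi_2)$, which I may as well include as the bottom of the chain to cover the empty case). With the chain condition established, Zorn's Lemma yields a maximal element of $\CP/\approx$ above $(\Pi_1,\Pi_2)$, which is the assertion.

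The step I expect to require the most care is checking that $(\Pi_1^\infty,\Pi_2^\infty)$ really belongs to $\CP$, i.e. that each $\Pi_i^\infty$ is an epimorphism onto the full inverse limit rather than onto a proper closed subgroup, and that the two maps share the common image $A^\infty$. This is the standard profinite surjectivity-onto-an-inverse-limit fact: the closed image $\Pi_i^\infty[G_i]$ surjects onto each $A^\alpha$ under $\pi_\alpha$, and a compactness (finite-intersection) argument on the fibres $\Pi_i^\infty[G_i]\cap\pi_\alpha^{-1}(x_\alpha)$ shows that such a closed subgroup must be all of $A^\infty$; this is already subsumed in the morphism produced by Claim \ref{claim:inverselimit}, since morphisms in $\SPG_{\CJ}$ are epimorphisms by definition. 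Everything else is bookkeeping of sorting data, which is handled automatically because the inverse-limit sorting data $\hat F$ and the transition and projection morphisms are all sorted by construction.
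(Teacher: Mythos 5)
Your proposal is correct and matches the paper's (only sketched) argument: the paper proves this lemma exactly by ``Zorn's Lemma with the inverse limit,'' following \cite[Lemma 1.8]{HL}, i.e.\ bounding a chain in $\CP/\approx$ by the sorted inverse limit of the common images via Remark \ref{rem:inverselimit_sorted_prof_gp} and Claim \ref{claim:inverselimit}. Your additional checks (uniqueness and coherence of the transition maps, surjectivity of the induced maps onto the inverse limit, and that $\CP/\approx$ is a set) are precisely the routine details the paper leaves implicit.
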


We introduce a notion of the {\em quasi SEP-cover} for sorted profinite groups, analogous to the quasi-embedding cover of profinite groups in \cite[p. 189]{HL} or the $I$-cover in \cite[Definition 24.4.3]{FJ}.
\begin{definition}\label{def:quasi_embedding_cover}
A morphism $p:(H,F_H)\rightarrow (G,F)$ is called a {\em quasi SEP-cover} (q.s.c.) if for every SEP-cover $\varphi :(E,F_E)\rightarrow (G,F)$, there is a morphism $\psi:(E,F_E)\rightarrow (H,F_H)$ such that $p\circ \psi=\varphi$.
\end{definition}

\begin{remark}\label{rem:basic_property_qsc}
Let $(G,F)$ be a sorted profinite group.
\begin{enumerate}
	\item For two morphisms $p:(H,F_H)\rightarrow (G,F)$ and $\Pi:(G,F)\rightarrow (A,F_A)$ of sorted profinite groups, if both $p$ and $\Pi$ are q.s.c., then $\Pi\circ p$ is a q.s.c. 
	\item There is a cardinal $\kappa$ depending only on the cardinality of $G$ such that for any q.s.c. $p:(H,F_H)\rightarrow (G,F)$, the cardinality of $H$ is less than $\kappa$. Furthermore, if $G$ is finite, we can take $\kappa$ as finite. 
	\item Let $p:(H,F_H)\rightarrow (G,F)$ be a q.s.c. which is a SEP-cover. Then, $p$ is a universal SEP-cover.
\end{enumerate}
\end{remark}
\noindent From the proof of \cite[Lemma 24.4.4]{FJ}, we have the following result.

\begin{lemma}\label{lem:maximal_in_P_qsc}
Let $(G,F)$ be a sorted profinite group and let $(B,F_B)\in \SIM(G,F)$. Let $(\Pi_1,\Pi_2)\in \CP((B,F_B),(G,F))$ be a maximal element. Consider the following cartesian diagram induced from $(\Pi_1,\Pi_2)$:
$$
\begin{tikzcd}
(H,F_H) \arrow[d, "p_1"'] \arrow[r, "p_2"] & (G,F) \arrow[d, "\Pi_2"]\\
(B,F_B) \arrow[r, "\Pi_1"'] & (A,F_A)
\end{tikzcd}
$$
Then, $p_2$ is a q.s.c.
\end{lemma}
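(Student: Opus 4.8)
The plan is to verify the definition of quasi SEP-cover (Definition \ref{def:quasi_embedding_cover}) directly: fix an arbitrary SEP-cover $\varphi\colon (E,F_E)\to (G,F)$ and produce a morphism $\psi\colon (E,F_E)\to (H,F_H)$, in fact one with $p_2\circ\psi=\varphi$. First I would record that both $(B,F_B)$ and $(A,F_A)$ lie in $\SIM(E,F_E)$. Since $(B,F_B)\in\SIM(G,F)$, there is an epimorphism $\rho\colon (G,F)\to (B,F_B)$, so $\rho\circ\varphi$ exhibits $(B,F_B)$ as a sorted finite quotient of $(E,F_E)$; and $(A,F_A)$ is a sorted finite quotient of $(E,F_E)$ via the composite $\Pi_2\circ\varphi$ of sorted epimorphisms.

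Next I would feed these into the embedding property of $E$. Because $(E,F_E)$ has SEP, the condition $\Emb_{(E,F_E)}\big((B,F_B),(A,F_A)\big)$ holds; applied to the morphism $\Pi_1\colon (B,F_B)\to (A,F_A)$ and the morphism $\Pi_2\circ\varphi\colon (E,F_E)\to (A,F_A)$, it yields a morphism $\psi_1\colon (E,F_E)\to (B,F_B)$ with $\Pi_1\circ\psi_1=\Pi_2\circ\varphi$. From the equality $\Pi_1\circ\psi_1=\Pi_2\circ\varphi$ and the pullback property of the fibre product in the category of profinite groups (the content of Remark \ref{rem:char_fired_prod}(2) read in $\PG$) I obtain a unique continuous homomorphism $\psi\colon E\to H$ with $p_1\circ\psi=\psi_1$ and $p_2\circ\psi=\varphi$. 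Note $\psi$ is not yet known to be surjective.

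The crux, and the step I expect to be the main obstacle, is showing that $\psi$ is onto, which is exactly where maximality of $(\Pi_1,\Pi_2)$ enters. Set $H'=\psi[E]\le H\le B\times G$. Since $p_1\circ\psi=\psi_1$ and $p_2\circ\psi=\varphi$ are surjective, $H'$ projects onto both $B$ and $G$, so Remark \ref{rem:extending_fibreprod_into_CH} supplies a pair $(\Pi_1',\Pi_2')\in\CP$ with common image $(A',F_{A'})$, with $H'$ the associated fibre product, such that $\big((H',F'),\Pi_1',\Pi_2'\big)\le'\big((H,F_H),\Pi_1,\Pi_2\big)$; by the definition of $\le'$ this means $(\Pi_1,\Pi_2)\le(\Pi_1',\Pi_2')$ in $\CP$. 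As $(\Pi_1,\Pi_2)$ is maximal, we get $(\Pi_1,\Pi_2)\approx(\Pi_1',\Pi_2')$, and transporting this equivalence through the order-reversing bijection $T$ of Lemma \ref{lem:maximal_to_minimal} gives $\big((H',F'),\Pi_1',\Pi_2'\big)\approx'\big((H,F_H),\Pi_1,\Pi_2\big)$, whence $H'=H$ and $\psi$ is surjective.

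Finally I would check sortedness. Now that $\psi$ is an epimorphism the push-forward $\psi_*(F_E)$ is defined, and for the generating pre-sorting data $F_X$ of $F_H$ on $X=\{p_1^{-1}[N_1]\}\cup\{p_2^{-1}[N_2]\}$ the relations $\psi_1=p_1\circ\psi$ and $\varphi=p_2\circ\psi$, together with the sortedness of $\psi_1$ and $\varphi$, give $F_X\subset\psi_*(F_E)$ pointwise on $X$; minimality of $F_H$ then yields $F_H\subset\psi_*(F_E)$, exactly as in the proof of Remark \ref{rem:inducing_fibre_prod}, so $\psi$ is sorted. Hence $\psi\colon (E,F_E)\to (H,F_H)$ is a morphism with $p_2\circ\psi=\varphi$, witnessing that $p_2$ is a q.s.c. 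The only genuinely delicate point is the surjectivity argument, which rests on combining Remark \ref{rem:extending_fibreprod_into_CH} with the maximality of $(\Pi_1,\Pi_2)$ through the bijection $T$; the rest is a routine assembly of the embedding property of $E$ with the pullback and push-forward formalism already in place.
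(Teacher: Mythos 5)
Your proposal is correct and follows essentially the same route as the paper's proof: apply SEP of the cover $(E,F_E)$ to obtain $\psi_1$ with $\Pi_1\circ\psi_1=\Pi_2\circ\varphi$, pull back to get $\psi\colon E\to H$, force surjectivity from maximality of $(\Pi_1,\Pi_2)$ via Remark \ref{rem:extending_fibreprod_into_CH}, and verify sortedness through the generating pre-sorting data $F_X$ and minimality of $F_H$. The only difference is cosmetic: where the paper compresses the surjectivity step into the phrase that $((H,F_H),\Pi_1,\Pi_2)$ is ``minimal,'' you unpack exactly that content by passing through the order-reversing bijection $T$ of Lemma \ref{lem:maximal_to_minimal}, which is a faithful (and slightly more explicit) rendering of the same argument.
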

\begin{proof}
Let $\psi_2:(G',F')\rightarrow (G,F)$ be a SEP-cover. Then, $(B,F_B)$ is in $\SIM(G',F')$. Since $(G',F')$ has SEP, there is a morphism $\psi_1:(G',F')\rightarrow (B,F_B)$ such that $\Pi_1\circ \psi_1=\Pi_2 \circ \psi_2$. Since $H$ is a fibre product of $B$ and $G$ over $A$, by Remark \ref{rem:char_fired_prod}(2), there is a homomorphism $\psi: G'\rightarrow H$ such that the following diagram is commutative:
$$
\begin{tikzcd}
(G',F')
\arrow[dr, "\psi"]
\arrow[drr, bend left, "\psi_2"]
\arrow[ddr, bend right, "\psi_1"'] & & \\
& (H,F_H) \arrow[d, "p_1"'] \arrow[r, "p_2"] & (G,F) \arrow[d, "\Pi_2"]\\
& (B,F_B) \arrow[r, "\Pi_1"'] & (A,F_A)
\end{tikzcd}
$$
Since $\psi[G']\le H$ and $((H,F_H),\Pi_1,\Pi_2)$ is minimal, we have that $\psi[G']=H$ so that $\psi$ is an epimorphism. Indeed, if $\psi[G']\lneq H$, then by Remark \ref{rem:extending_fibreprod_into_CH}, there is $(\Pi_1',\Pi_2')$ in $\mathcal P$ such that $((\psi[G'],F_{\psi[G']}),\Pi_1',\Pi_2')\lneq' ((H,F_H),\Pi_1,\Pi_2)$, where $(\psi[G'],F_{\psi[G']})$ is the fibre product of $(B,F_B)$ and $(G,F)$ along $\Pi_1'$ and $\Pi_2'$. This contradicts  the minimality of $((H,F_H),\Pi_1,\Pi_2)$.

We will show that $\psi$ is sorted. Since $(H,F_H)$ is the fibre product of $(B,F_B)$ and $(G,F)$, the sorting data $F_H$ is generated by the following pre-sorting data $F_X$ (see Definition \ref{def:fibre_product}):
\begin{itemize}
	\item $X=\{p_1^{-1}[N_1]:N_1\in \CN(B)\}\cup\{p_2^{-1}[N_2]:N_2\in \CN(G)\}$;
	\item For $N_1\in \CN(B)$ and $N_2\in \CN(G)$, $$F_X(p_1^{-1}[N_1])=F_1(N_1),\ F_X(p_2^{-1}[N_2])=F_2(N_2).$$
\end{itemize}
Since $\psi_1$ and $\psi_2$ are sorted, for $N_1\in \CN(B)$ and $N_2\in \CN(G)$, $$F_1(N_1)\subseteq F'(\psi_1^{-1}[N_1]),\ F_2(N_2)\subseteq F'(\psi_2^{-1}[N_2]).$$ Since $\psi_1=p_1\circ \psi$ and $\psi_2=p_2\circ \psi$, we have that for $N_1\in \CN(B)$ and $N_2\in \CN(G)$, 
\begin{align*}
F'(\psi^{-1}[p_1^{-1}[N_1]])&=F'(\psi_1^{-1}[N_1])\\
&\supseteq F_1(N_1)\\
&=F_H(p_1^{-1}[N_1]),
\end{align*}
and
\begin{align*}
F'(\psi^{-1}[p_2^{-1}[N_2]])&=F'(\psi_2^{-1}[N_2])\\
&\supseteq F_2(N_2)\\
&=F_H(p_2^{-1}[N_2]),
\end{align*}
which implies $\psi$ is sorted because $F_H$ is generated by $F_X$. Therefore, we have that $\psi_2=p_2\circ \psi$ for a morphism $\psi$, and $p_2$ is a q.s.c. 
\end{proof}
\noindent We have the following characterization of sorted profinite groups not having SEP, analogous to \cite[Lemma 24.4.4]{FJ}.
\begin{lemma}\label{lem:char_no_SIP}
If a sorted profinite group $(G,F)$ does not have SEP, then, either 
	\begin{enumerate}
		\item there exists a q.s.c. $p:(H,F_H)\rightarrow (G,F)$ with a non-trivial kernel, or
		\item there is a q.s.c. $\id:(G,F')\rightarrow (G,F)$ with $F\subset F'$.
	\end{enumerate}
\end{lemma}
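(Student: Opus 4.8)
The plan is to feed the failure of SEP into the $\CP$/$\CH$ machinery and to extract a nontrivial quasi-embedding cover via Lemma \ref{lem:maximal_in_P_qsc}. Assume $(G,F)$ does not have SEP. Unwinding the definition of the sorted embedding condition, there are finite sorted quotients $(D,F_D),(E,F_E)\in\SIM(G,F)$ together with morphisms $\rho\colon(D,F_D)\rightarrow(E,F_E)$ and $\varphi\colon(G,F)\rightarrow(E,F_E)$ such that no morphism $\psi\colon(G,F)\rightarrow(D,F_D)$ satisfies $\rho\circ\psi=\varphi$. The first move is to record that the pair $(\rho,\varphi)$ is an element of $\CP((D,F_D),(G,F))$, with common image $(E,F_E)$; since $(D,F_D)$ is a \emph{finite} quotient, every construction of this section is available.

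By Lemma \ref{lem:existing_maximal_in_CP} I would then pick a maximal element $(\Pi_1,\Pi_2)\in\CP((D,F_D),(G,F))$ above $(\rho,\varphi)$, say with common image $(A,F_A)$. By definition of the pre-order $\le$ this supplies a morphism $\Pi\colon(A,F_A)\rightarrow(E,F_E)$ with $\Pi\circ\Pi_1=\rho$ and $\Pi\circ\Pi_2=\varphi$. Applying Lemma \ref{lem:maximal_in_P_qsc} to this maximal element (with $(D,F_D)$ playing the role of $(B,F_B)$) yields the cartesian square with projections $p_1\colon(H,F_H)\rightarrow(D,F_D)$ and $p_2\colon(H,F_H)\rightarrow(G,F)$ and, crucially, guarantees that $p_2$ is a q.s.c. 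Everything now reduces to showing that $p_2$ is not a sorted isomorphism, since the two alternatives in the statement correspond precisely to whether this failure is witnessed by the kernel or by the sorting data.

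The heart of the argument is a single contradiction. Suppose $p_2$ were a sorted isomorphism. Then $\psi:=p_1\circ p_2^{-1}\colon(G,F)\rightarrow(D,F_D)$ is a morphism, and the commutativity $\Pi_1\circ p_1=\Pi_2\circ p_2$ of the cartesian square gives $\Pi_1\circ\psi=\Pi_2$; composing with $\Pi$ yields $\rho\circ\psi=\Pi\circ\Pi_1\circ\psi=\Pi\circ\Pi_2=\varphi$, contradicting the non-existence of a lift. Hence $p_2$ is not a sorted isomorphism. If $\ker p_2\neq\{e\}$ we are in alternative (1). Otherwise $p_2\colon H\rightarrow G$ is an isomorphism of profinite groups; setting $F':=(p_2)_*(F_H)$, the fact that $p_2$ is a morphism gives $F\subseteq F'$ by Definition/Remark \ref{def/rem:pushforward_sortingdata}, while $F=F'$ would make $p_2$ a sorted isomorphism, which is excluded. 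Thus $F\subsetneq F'$, and transporting the q.s.c. $p_2$ through the sorted isomorphism $(H,F_H)\cong(G,F')$ shows that $\id\colon(G,F')\rightarrow(G,F)$ is itself a q.s.c., which is alternative (2).

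The step I expect to demand the most care is this last transport. For an arbitrary SEP-cover $\theta\colon(G',F_{G'})\rightarrow(G,F)$ one takes the morphism $(G',F_{G'})\rightarrow(H,F_H)$ provided by $p_2$ being a q.s.c. and postcomposes with the sorted isomorphism $(H,F_H)\cong(G,F')$, and one must verify that the result is a genuine morphism into $(G,F')$; this is where the identity $F'=(p_2)_*(F_H)$ is used. I would also double-check the orientation of the pre-order $\le$ on $\CP$, to be certain the mediating map $\Pi$ runs from $(A,F_A)$ to $(E,F_E)$ and hence that the identities $\Pi\circ\Pi_1=\rho$ and $\Pi\circ\Pi_2=\varphi$ hold in the direction used in the contradiction, since the entire argument hinges on them.
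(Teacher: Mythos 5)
Your proposal is correct and follows essentially the same route as the paper's proof: pass from the failing lift to a maximal element of $\CP((B,F_B),(G,F))$ via Lemma \ref{lem:existing_maximal_in_CP}, invoke Lemma \ref{lem:maximal_in_P_qsc} to get that $p_2$ is a q.s.c., rule out $p_2$ being a sorted isomorphism by the composition $\pi\circ\Pi_1\circ p_1\circ p_2^{-1}=\pi_2$, and then split on whether $\ker p_2$ is trivial, using the push-forward sorting data $F'=(p_2)_*(F_H)$ in the trivial-kernel case. Your explicit verification of the transport of the q.s.c.\ property through the sorted isomorphism $(H,F_H)\cong(G,F')$ is a detail the paper leaves implicit, but it is the same argument.
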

\begin{proof}
Suppose a sorted profinite group $(G,F)$ does not have SEP. So, there exist
\begin{itemize}
	\item $(A,F_A),(B,F_B)\in \SIM(G,F)$; and
	\item morphisms $\pi_1:(B,F_B)\rightarrow (A,F_A)$ and $\pi_2 :(G,F)\rightarrow (A,F_A)$,
\end{itemize}
such that there is no morphism $p:(G,F)\rightarrow (B,F_B)$ with $\pi_2=\pi_1\circ p$. By Lemma \ref{lem:existing_maximal_in_CP}, there is a maximal element $(\Pi_1,\Pi_2)\in \CP((B,F_B),(G,F))$ such that $(\pi_1,\pi_2)\le (\Pi_1,\Pi_2)$. Then, we have the following diagram:

$$
\begin{tikzcd}
(H,F_H) \arrow[d, "p_1"'] \arrow[r, "p_2"] & (G,F) \arrow[d, "\Pi_2"] \arrow[ddr, bend left, "\pi_2"]& \\
(B,F_B) \arrow[r, "\Pi_1"'] \arrow[drr, bend right, "\pi_1"'] & (A',F_{A'}) \arrow[dr, "\pi"] & \\
& & (A,F_A)
\end{tikzcd},
$$
where $(H,F_H)$ is the fibre product of $(B,F_B)$ and $(G,F)$ over $(A',F_{A'})$.

Note that $p_2$ is a q.s.c. by Lemma \ref{lem:maximal_in_P_qsc}. Suppose $p_2$ is an isomorphism. Let $p=p_1\circ p_2^{-1}$. Then, we have that
\begin{align*}
\pi_1\circ p&=\pi_1\circ (p_1\circ p_2^{-1})\\
&=\left ((\pi\circ \Pi_1)\circ p_1\right) \circ p_2^{-1}\\
&=\pi\circ (\Pi_2\circ p_2)\circ p_2^{-1}\\
&=\pi\circ \Pi_2\\
&=\pi_2,
\end{align*}
which is a contradiction. So, $p_2$ is not an isomorphism. If $p_2$ has a non-trivial kernel, then $p_2$ is the desired q.s.c.. Suppose $p_2$ has trivial kernel. Consider a sorting data $F'$ on $G$ given as follows: For $N\in \CN(G)$, $$F'(N):=F_H(p_2^{-1}[N]).$$ Then, $p_2 : (H,F_H)\rightarrow (G,F')$ is an isomorphism and a q.s.c.. So, by Remark \ref{rem:basic_property_qsc}(1), $\id=p_2^{-1}\circ p_2 :(G,F')\rightarrow (H,F_H)\rightarrow (G,F)$ is a q.s.c. with $F\subset F'$.

\end{proof}
Motivated by the proof of \cite[Theorem 1.12]{HL}, we provide the following lemma. 
\begin{lemma}\label{lem:inverse_limit_qsc}
For an ordinal $\alpha$, consider an inverse system $((G_i,F_i))_i$ indexed by ordinals $i< \alpha$ such that
\begin{itemize}
	\item for each $i<j$, the transition map $\pi_{j,i}$ is a q.s.c.;
	\item for each limit ordinal $\beta$, $(G_{\beta},F_{\beta})$ is the inverse limit of the inverse system $((G_i,F_i))_{i<\beta}$ with transition maps $\pi_{j,i}$ for $i<j<\beta$;
	\item for each limit ordinal $\beta$ and for $i<\beta$, the transition map $\pi_{\beta,i}$ is the natural projection from $(G_{\beta},F_{\beta})$ to $(G_i,F_i)$ coming from the inverse limit construction.
\end{itemize}
Let $(G,F)$ be the inverse limit of $((G_i,F_i))_{i<\alpha}$ and let $\pi_i:(G,F)\rightarrow (G_i,F_i)$ be the canonical projection for each $i<\alpha$. Then, $\pi_0$ is a q.s.c.
\end{lemma}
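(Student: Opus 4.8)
The plan is to fix an arbitrary SEP-cover $\varphi:(E,F_E)\to(G_0,F_0)$ and to produce, by transfinite recursion along the ordinals $i<\alpha$, a compatible family of morphisms $\psi_i:(E,F_E)\to(G_i,F_i)$ with $\psi_0=\varphi$ and $\pi_i^j\circ\psi_i=\psi_j$ for all $j\le i$, where $\pi_i^j$ denotes the transition map $(G_i,F_i)\to(G_j,F_j)$. Once such a family is built, the universal property of the inverse limit in $\SPG_{\CJ}$ (Claim \ref{claim:inverselimit}, see also Remark \ref{rem:inverselimit_sorted_prof_gp}) supplies a morphism $\psi:(E,F_E)\to(G,F)$ with $\pi_i\circ\psi=\psi_i$ for every $i<\alpha$; in particular $\pi_0\circ\psi=\psi_0=\varphi$, which is exactly what is needed to conclude that $\pi_0$ is a q.s.c.

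The recursion has three cases. For the base case, set $\psi_0:=\varphi$. For a successor $i+1<\alpha$, note first that since $(E,F_E)$ has SEP (being a SEP-cover of $(G_0,F_0)$), the morphism $\psi_i:(E,F_E)\to(G_i,F_i)$ is itself a SEP-cover of $(G_i,F_i)$ by Definition \ref{def:SIP_cover}. As the transition map $\pi_{i+1}^i:(G_{i+1},F_{i+1})\to(G_i,F_i)$ is a q.s.c.\ by hypothesis, applying the defining property of a q.s.c.\ to this SEP-cover yields a morphism $\psi_{i+1}:(E,F_E)\to(G_{i+1},F_{i+1})$ \emph{over the base}, i.e.\ with $\pi_{i+1}^i\circ\psi_{i+1}=\psi_i$; compatibility with the earlier $\psi_j$ ($j\le i$) then follows by composing with the already-established relation $\pi_i^j\circ\psi_i=\psi_j$. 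For a limit ordinal $\beta<\alpha$, the group $(G_\beta,F_\beta)$ is by hypothesis the inverse limit of $((G_i,F_i))_{i<\beta}$, so the compatible family $(\psi_i)_{i<\beta}$ induces, again by Claim \ref{claim:inverselimit}, a unique morphism $\psi_\beta:(E,F_E)\to(G_\beta,F_\beta)$ through which all the $\psi_i$ factor; full compatibility up to $\beta$ is then immediate from the induction hypothesis.

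The key point making each stage legitimate is that in $\SPG_{\CJ}$ morphisms are required to be epimorphisms, and the universal property of Claim \ref{claim:inverselimit} indeed returns an epimorphism: a closed subgroup of an inverse limit that projects onto every factor is the whole limit (by a compactness, i.e.\ finite-intersection, argument), so the assembled map from the compact group $E$ is automatically surjective, and it is sorted by construction. I expect the only real subtlety to be bookkeeping: one must carry the \emph{full} compatibility $\pi_i^j\circ\psi_i=\psi_j$ through the recursion (not merely the one-step relations), so that the universal property can be invoked both at limit stages and at the final assembly, and one must use that a q.s.c.\ supplies a lift over the base rather than an abstract morphism, which is precisely the strength of the q.s.c.\ condition established in the conclusion of Lemma \ref{lem:maximal_in_P_qsc}. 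No genuinely hard step arises; the argument is a clean transfinite recursion resting on the q.s.c.\ hypothesis at successors and on closure of $\SPG_{\CJ}$ under inverse limits at limit stages.
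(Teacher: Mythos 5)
Your proof is correct and takes essentially the same route as the paper's: fix a SEP-cover of $(G_0,F_0)$, construct a compatible family of lifts by transfinite recursion (invoking the q.s.c.\ hypothesis at successor stages and the universal property of the inverse limit in $\SPG_{\CJ}$ at limit stages), and assemble the family into a morphism $\psi$ with $\pi_0\circ\psi=\varphi$. The paper even makes the same implicit use you flag, namely that a q.s.c.\ supplies a lift commuting over the base (as in the conclusion of Lemma \ref{lem:maximal_in_P_qsc}) rather than a bare morphism.
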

\begin{proof}
If $\alpha$ is a successor ordinal, that is, $\alpha=\alpha'+1$, then $(G,F)=(G_{\alpha'},F_{\alpha'})$ and we are done. We assume that $\alpha$ is a limit ordinal. Let $p:(G',F')\rightarrow (G_0,F_0)$ be a SEP-cover. To show that $\pi$ is a q.s.c., using transfinite induction, we will construct a sequence $(p_i:(G',F')\rightarrow (G_i,F_i))_{i<\alpha}$ of morphisms such that for each $0\le i<j<\alpha$, $p_i=\pi_{j,i}\circ p_j$. Put $p_0:=p$. Suppose we have constructed $(p_i)_{i< \gamma}$ for some $\gamma<\alpha$. If $\gamma$ is a limit ordinal, there is a desired morphism $p_{\gamma}:(G',F')\rightarrow (G_{\gamma},F_{\gamma})$ because $(G_{\gamma},F_{\gamma})$ is the inverse limit of $(G_i,F_i)_{i<\gamma}$. If $\gamma=\gamma'+1$ is a successor ordinal, there is a morphism $r:(G',F')\rightarrow (G_{\gamma},F_{\gamma})$ such that $p_{\gamma'}=\pi_{\gamma,\gamma'}\circ r$ because $\pi_{\gamma,\gamma'}$ is a q.s.c. Put $p_{\gamma}:=r$, which is the desired one. 

Since $(G,F)$ is the inverse limit of $(G_i,F_i)_{i<\beta}$, there is $q:(G',F')\rightarrow (G,F)$ such that for each $i<\alpha$, $p_i=\pi_i\circ q$. Thus, we have that $p=\pi_0\circ q$, and $\pi_0$ is a q.s.c.
\end{proof}

\begin{theorem}\label{thm:existence_universal_SIP_cover}
Let $(G,F)$ be a sorted profinite group. Then, there is a universal SEP-cover $p:(H,F_H)\rightarrow (G,F)$. Furthermore, if $G$ is finitely generated, then $p$ is the unique universal SEP-cover (up to isomorphism).
\end{theorem}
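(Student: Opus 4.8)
The plan is to build the universal SEP-cover of $(G,F)$ by iterating the ``one-step improvement'' provided by Lemma~\ref{lem:char_no_SIP} along a suitably long transfinite sequence, and then to pass to the inverse limit, using Lemma~\ref{lem:inverse_limit_qsc} to verify that the limit is still a q.s.c.\ over $(G,F)$. Concretely, I would construct an inverse system $((G_i,F_i))_{i<\alpha}$ with $(G_0,F_0)=(G,F)$ and each transition map $\pi_i^j$ a q.s.c.: at a successor stage $i+1$, if $(G_i,F_i)$ already has SEP I stop (or keep the system stationary); otherwise Lemma~\ref{lem:char_no_SIP} furnishes a q.s.c.\ $\pi_{i+1}^i\colon(G_{i+1},F_{i+1})\to(G_i,F_i)$ which is a \emph{strict} improvement, either by enlarging the kernel (case (1)) or by strictly enlarging the sorting data (case (2), $F_i\subsetneq F_{i+1}$). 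At limit stages I take the inverse limit, which is again in $\SPG_{\CJ}$ by Remark~\ref{rem:inverselimit_sorted_prof_gp}, and whose projections are q.s.c.\ by Lemma~\ref{lem:inverse_limit_qsc}. Composition of q.s.c.\ maps is a q.s.c.\ by Remark~\ref{rem:basic_property_qsc}(1), so each $\pi_0^i\colon(G_i,F_i)\to(G,F)$ is a q.s.c.

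The heart of the argument is proving that this process \emph{terminates}: I must bound the length $\alpha$ by a cardinal depending only on $(G,F)$ so that at some stage $(G_\alpha,F_\alpha)$ has SEP. The bound comes from Remark~\ref{rem:basic_property_qsc}(2): every q.s.c.\ over $(G,F)$ has underlying group of cardinality at most $|\mathbb{F}_\kappa|$ where $\kappa=\rk(G)$, so the underlying profinite groups in the tower cannot strictly grow beyond this bound, and the sorting data live inside the fixed set of $\CN(G_i)$-indexed subsets of $\CJ^{<\BN}$. Thus I would argue that any strictly increasing (in the $\leq'$/improvement sense) tower must stabilize before a fixed cardinal $\lambda=\lambda(\kappa,|\CJ|)$, since each strict successor step either strictly increases a kernel or strictly increases a sorting datum, and neither can happen cofinally often past $\lambda$. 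Once the tower stabilizes, the stable object has no strict improvement available, hence by the contrapositive of Lemma~\ref{lem:char_no_SIP} it has SEP; being a q.s.c.\ over $(G,F)$ that is itself a SEP-cover, it is a universal SEP-cover by Remark~\ref{rem:basic_property_qsc}(3).

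For the finitely generated case I would sharpen the termination bound. If $G$ is finitely generated then $\rk(G)$ is finite and, for the uniqueness, I would use that a q.s.c.\ which is a SEP-cover is automatically universal (Remark~\ref{rem:basic_property_qsc}(3)): given two universal SEP-covers $p\colon(H,F_H)\to(G,F)$ and $p'\colon(H',F_{H'})\to(G,F)$, the defining universal property yields morphisms $q\colon(H',F_{H'})\to(H,F_H)$ and $q'\colon(H,F_H)\to(H',F_{H'})$ with $p\circ q=p'$ and $p'\circ q'=p$. Then $p\circ(q\circ q')=p$ and $p'\circ(q'\circ q)=p'$; the composites $q\circ q'$ and $q'\circ q$ are endomorphisms over $(G,F)$, and I would argue that in the finitely generated setting these must be isomorphisms (a surjective endomorphism of a finitely generated profinite group is an automorphism, together with the fact that the induced maps on sorting data must be equalities since the sorting data are pinned down by the maximality/fibre-product construction). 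This forces $q,q'$ to be mutually inverse sorted isomorphisms, giving uniqueness up to isomorphism.

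I expect the main obstacle to be the termination/bounding step: making precise that strict improvements of \emph{either} type (kernel enlargement or sorting-data enlargement) cannot occur cofinally often, so that the transfinite tower provably stabilizes at a bounded ordinal. One must handle the two improvement types simultaneously—an increasing chain of kernels in a group of bounded cardinality must stabilize, and once kernels stabilize the sorting data can only strictly increase finitely many times at each level before being pinned, but one has to control this uniformly across all $N\in\CN(G_i)$. The surjective-endomorphism-is-an-isomorphism fact in the finitely generated uniqueness step is the second delicate point, since one must verify it respects the sorting data and not merely the underlying profinite groups.
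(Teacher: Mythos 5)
Your plan is, in skeleton, exactly the paper's proof: a transfinite tower of quasi SEP-covers produced by Lemma~\ref{lem:char_no_SIP}, inverse limits at limit stages with Lemma~\ref{lem:inverse_limit_qsc}, composition via Remark~\ref{rem:basic_property_qsc}(1), a cardinality bound via Remark~\ref{rem:basic_property_qsc}(2), and the conclusion via Remark~\ref{rem:basic_property_qsc}(3). The one place you genuinely diverge is the point you yourself flag as the main obstacle, and it is worth seeing how the paper dissolves it: instead of interleaving the two improvement types and controlling them ``simultaneously,'' the paper runs two separate phases. In the first phase it only ever takes q.s.c.'s with \emph{non-trivial kernel}; termination is by pigeonhole at the top of the tower, since the kernels $\ker(\pi_i^{\alpha})$ form a strictly decreasing chain of (closed normal) subgroups of a group of cardinality at most $|\mathbb{F}_{\kappa}|$, so the length is bounded by $(2^{|\mathbb{F}_\kappa|})^+$. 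The resulting $(E(G),E(F))$ has the property that \emph{every} q.s.c.\ to it is injective, and this property persists down the second phase (any q.s.c.\ to a later stage composes with the identity q.s.c.'s to give a q.s.c.\ to $(E(G),E(F))$); hence in the second phase only case (2) of Lemma~\ref{lem:char_no_SIP} can occur, and one climbs a strictly increasing chain of sorting data on the now-\emph{fixed} group $E(G)$, taking unions at limits. Note also that your parenthetical justification --- ``once kernels stabilize the sorting data can only strictly increase finitely many times at each level'' --- is false as stated: the chain of sorting data can be transfinite; it merely stabilizes at a bounded ordinal because each strict step adds at least one pair $(N,J)$ from a set of bounded cardinality. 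Your headline claim (stabilization below a cardinal $\lambda(\kappa,|\CJ|)$) is right, but the bookkeeping you defer is exactly what the two-phase split makes trivial.

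On uniqueness in the finitely generated case, your first half matches the paper: universality gives mutual morphisms $q,q'$, one quotes Remark~\ref{rem:reduct_universal_cover} together with \cite[Theorem 1.12]{HL} to see $G_1,G_2$ are finitely generated, and \cite[Proposition 7.6]{R} makes $q,q'$ bijective. But your appeal to ``the sorting data are pinned down by the maximality/fibre-product construction'' is not an argument, and it is not how the paper closes the gap. The inclusion $F_1\subseteq q_*(F_2)$ is automatic from $q$ being a morphism; for the reverse inclusion the paper makes a \emph{second} appeal to universality: since $q:(G_2,F_2)\cong (G_1,q_*(F_2))$, the map $p_2\circ q^{-1}:(G_1,q_*(F_2))\rightarrow (G,F)$ is again a universal SEP-cover, so there is a morphism $r:(G_1,F_1)\rightarrow (G_1,q_*(F_2))$ with $p_1=(p_2\circ q^{-1})\circ r=p_1\circ r$, which forces $r=\id$ and hence $q_*(F_2)\subseteq F_1$. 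Without some such argument your proof of uniqueness is incomplete, since bijectivity of $q$ at the level of profinite groups says nothing by itself about equality of the sorting data.
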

\begin{proof}
If $(G,F)$ has SEP, then $\id : (G,F)\rightarrow (G,F)$ is a universal SEP-cover. Suppose $(G,F)$ does not have SEP. Let $\kappa$ be a cardinal such that any q.s.c. $(H,F_H)$ of $(G,F)$ has the cardinality less than $\kappa$ and let $\aleph$ be a cardinal with $2^{\kappa}<\aleph$.

Using transfinite induction, we will construct an inverse system $((G_i,F_i))_i$ indexed by ordinals $i\le \alpha$ for some ordinal $\alpha<\aleph$ such that
\begin{enumerate}
	\item for each $i<j$, the transition map $\pi_{j,i}$ is a q.s.c. with a non-trivial kernel;
	\item for each limit ordinal $\beta$, $(G_{\beta},F_{\beta})$ is the inverse limit of the inverse system $((G_i,F_i))_{i<\beta}$ with transition maps $\pi_{i,j}$ for $i<j<\beta$;
	\item for each limit ordinal $\beta$ and for $i<\beta$, the transition map $\pi_{\beta,i}$ is the natural projection from $(G_{\beta},F_{\beta})$ to $(G_i,F_i)$ coming from the inverse limit construction;
	\item any q.s.c. to $(G_{\alpha},F_{\alpha})$ has trivial kernel.
\end{enumerate}
Put $(G_0,F_0)=(G,F)$. Suppose we have constructed an inverse system $((G_i,F_i))_{i<\beta}$ for an ordinal $\beta$, which satisfies $(1)-(3)$.\\

Case. $\beta$ is a successor ordinal, that is, $\beta=\beta'+1$. If $(G_{\beta'},F_{\beta'})$ has SEP, then we stop the process. Suppose $(G_{\beta'},F_{\beta'})$ does not have SEP. By Lemma \ref{lem:char_no_SIP}, there is a q.s.c. to $(G_{\beta'},F_{\beta'})$, which is not an isomorphism. If every q.s.c. to $(G_{\beta'},F_{\beta'})$ is injective, we stop the process. So, we may assume that there is a q.s.c. $p:(G',F')\rightarrow (G_{\beta'},F_{\beta'})$ with a non-trivial kernel. Put $(G_{\beta},F_{\beta}):=(G',F')$ and $\pi_{\beta,\beta'}:=p$. For each $i<\beta$, put $\pi_{\beta,i}:=\pi_{\beta',i}\circ p$. By Remark \ref{rem:basic_property_qsc}(1), each $\pi_{\beta,i}$ is a q.s.c.\\

Case. $\beta$ is a limit ordinal. Let $(G_{\beta},F_{\beta})$ be the inverse limit of $((G_i,F_i))_{i<\beta}$. For each $i<\beta$, let $\pi_{\beta,i}$ be the natural projection map from $(G_{\beta},F_{\beta})$ to $(G_i,F_i)$. By Lemma \ref{lem:inverse_limit_qsc}, each $\pi_{\beta,i}$ is a q.s.c.\\

For each $i<j$, $\pi_{j,i}$ has a non-trivial kernel. Namely, suppose there are $i<j$ such that $\ker(\pi_{j,i})$ is trivial. Since $\pi_{j,i}=\pi_{i+1,i}\circ \pi_{j,i+1}$, where $\pi_{k,k}=\id$ for each $k$, $\ker(\pi_{i+1,i})$ is also trivial, which is a contradiction. In our construction, $\beta$ should be less than $\aleph$. Suppose $\beta\ge \aleph$. Without loss of generality, we may assume that $\beta$ is a successor ordinal and write $\beta=\beta'+1$. We have that $|G_{\beta'}|\le \kappa$. So, $|\CN(G_{\beta'})|\le 2^{\kappa}$. Since $|\beta'|\ge \aleph>2^{\kappa}$, by the pigeon hole principle, for some $i<j<\beta'$, $\ker(\pi_{\beta',j})=\ker(\pi_{\beta',i})$. Since $\pi_{\beta',i}=\pi^j_i\circ \pi_{\beta',j}$, we have that $\pi_{j,i}$ is injective, which is a contradiction. 

Now, take a maximal inverse system $((G_i,F_i))_{i<\beta}$ satisfying $(1)-(3)$. Note that $\beta$ is a successor ordinal and write $\beta=\alpha+1$. Then, for the q.s.c. $\pi_{\alpha,0}:(G_{\alpha},F_{\alpha})\rightarrow (G,F)$, we have that every q.s.c. to $(G_{\alpha},F_{\alpha})$ is injective and so $((G_i,F_i))_{i\le \alpha}$ is the desired one.\\

Let $(E(G),E(F)):=(G_{\alpha},F_{\alpha})$ and let $\pi:=\pi_{\alpha,0}$. If $(E(G),E(F))$ has SEP, then $\pi$ is a universal SEP-cover of $(G,F)$. Suppose $(E(G),E(F))$ does not have SEP. By transfinite induction, we will construct a sequence $(E^i(F))_{i\le \gamma'}$ of sorting data on $E(G)$ such that for $i<j\le \gamma'$
\begin{itemize}
	\item $E^i(F)\subset E^j(G)$;
	\item $\id_{j,i}:=\id:(E(G),E^j(F))\rightarrow (E(G),E^i(F))$ is a q.s.c.;
	\item $(E(G),E^{\gamma'}(F))$ has SEP.
\end{itemize}
Suppose we have constructed such a sequence $(E^i(F))_{i< \alpha'}$ for an ordinal $\alpha'$. If $\alpha'$ is a limit ordinal, put $E^{\alpha'}(F):=\bigcup_{i<\alpha'}E^i(F)$. By Lemma \ref{lem:inverse_limit_qsc}, each $\id_{\alpha',i}:(E(G),E^{\alpha'}(F))\rightarrow (E(G),E^i(F))$ is a q.s.c.

Suppose $\alpha'$ is a successor ordinal, that is, $\alpha'=\alpha''+1$. If $(E(G),E^{\alpha''}(F))$ has SEP, put $\gamma'=\alpha''$ and stop the process. If $(E(G),E^{\alpha''}(F))$ does not have SEP, then by Lemma \ref{lem:char_no_SIP}(2), there is a sorting data $F'$ on $E(G)$ such that $E^{\alpha''}(F)\subset F'$ and $\id:(E(G),F')\rightarrow (E(G),E^{\alpha''}(F))$ is a q.s.c, and put $E^{\alpha'}(F):=F'$. Note that we can not apply Lemma \ref{lem:char_no_SIP}(1) in this case. Namely, if there is a q.s.c $\pi:(G',F')\rightarrow (E(G),E^{\alpha''}(F))$ whose kernel is non-trivial, then we have a q.s.c. $\id \circ \pi:(G',F')\rightarrow (E(G),E^{\alpha''}(F))\rightarrow (G,F)$ whose kernel is non-trivial, which contradicts the choice of $(G,F)$.

This transfinite inductive process should stop for some $\gamma'<|\CJ|^+$ so that $(E(G),E^{\gamma'}(F))$ has SEP. Thus, a q.s.c. $$\pi_{\alpha,0}\circ \id_{\gamma',0}:(E(G),E^{\gamma'}(F))\rightarrow (E(G),E(F))\rightarrow(G,F)$$ is a SEP-cover so that the SEP-cover $\pi_{\alpha,0}\circ \id_{\gamma',0}$ is a universal SEP-cover.\\

We now prove the furthermore part. Suppose $G$ is finitely generated. Let $p_i:(G_i,F_i)\rightarrow (G,F)$ be a universal SEP-cover of $(G,F)$ for $i=1,2$. By Remark \ref{rem:reduct_universal_cover} and \cite[Theorem 1.12]{HL}, $G_1$ and $G_2$ are finitely generated. By universality, there are morphisms $q:(G_2,F_2)\rightarrow (G_1,F_1)$ and $q':(G_1,F_1)\rightarrow (G_2,F_2)$ such that $p_2=p_1\circ q$ and $p_1=p_2\circ q'$. Since $G_1$ and $G_2$ are finitely generated, by \cite[Proposition 7.6]{R}, both $q$ and $q'$ are bijective. We want to show that $q_*(F_2)=F_1$. Since $q:(G_2,F_2)\rightarrow (G_1,F_1)$ is a morphism, $F_1\subseteq q_*(F_2)$. Also we have that $q:(G_2,F_2)\cong (G_1,q_*(F_2))$ and $p_2\circ q^{-1}:(G_1,q_*(F_2))\rightarrow (G,F)$ is a universal SEP-cover. So, there is a morphism $r:(G_1,F_1)\rightarrow (G_1,q_*(F_2))$ such that $p_1=(p_2\circ q^{-1})\circ r$. Since $p_2=p_1\circ q$, the morphism $r$ should be the identity map on $G$ and $q_*(F_2)\subseteq F_1$. Thus, $F_1=q_*(F_2)$.
\end{proof}

%In the proof of Theorem \ref{thm:existence_universal_SIP_cover}, we only used the inverse limit and the fibre products and so we have the following result.
\begin{remark}\label{rem:full_subcategory_closed_under_a_universal_ec}
Let $\mathcal C$ be a formation of finite groups, that is, a set of finite groups closed under taking quotients and fibre products (see \cite[Section 17.3]{FJ}). Let $\Pro\mathcal C$ be the set of {\em pro-$\mathcal C$ groups}, that is, the set of inverse limits of groups in $\mathcal C$. Let $\Pro\mathcal C_{\CJ}$ be the full subcategory of the category $\SPG$ whose objects are of the form $(G,F)$, called a {\em sorted pro-$\mathcal C$ group}, for a pro-$\CC$ group $G$. In \cite[Lemma 24.4.6]{FJ}, any pro-$\mathcal C$ group $G$ has the universal embedding cover, which is also a pro-$\mathcal C$ group. So, by Remark \ref{rem:reduct_universal_cover}, any sorted pro-$\mathcal C$ group has a universal SEP-cover, which is also a pro-$\mathcal C$ group.

Suppose $\mathcal C$ is closed under taking subgroups. For example, let $\mathcal C$ be the set of abelian groups, nilpotent groups, solvable groups, or $p$-groups for a fixed prime $p$. By \cite[Lemma 17.3.1]{FJ}, $\Pro\mathcal C$ is closed under taking quotients, inverse limits, and fibre products. Since our proof works for any full subcategory of $\SPG$ closed under taking quotients, inverse limits, and fibre products, we also deduce from the proof of Theorem \ref{thm:existence_universal_SIP_cover} that any sorted pro-$\mathcal C$ group has an universal SEP-cover, which is also a sorted pro-$\mathcal C$ group.
\end{remark}

\begin{example}
We continue working with the notations in Example \ref{ex:sorted_profinite_group_having_SEP}(2). Define a sorting data $F$ on $G=\BZ_2\times \BZ_2$ as follows:
\begin{itemize}
	\item $F(0)=F(G)=F(N_{(1,1)})=\CJ^{<\BN}$;
	\item $F(N_{(1,0)})=F(N_{(0,1)})=\{J\in \CJ^{<\BN}:s_1\in ||J||\}$.
\end{itemize}
Take $(\Pi_1,\Pi_2)\in \CP((G,F),(G,F))$ such that
\begin{itemize}
	\item $A:=G$, $F_A$ is a sorting data on $A$ given as follows:
	\begin{itemize}
		\item $F_A(A)=\CJ^{\BN}$;
		\item $F_A(0)=F(N_{(1,0)})=F(N_{(0,1)})=F(N_{(1,1)})=\{J\in \CJ^{<\BN}:s_1\in ||J||\}$.
	\end{itemize}
	\item $\Pi_1,\Pi_2:(G,F)\rightarrow(A,F_A)$ given as follows:
	\begin{itemize}
		\item $\Pi_2=\id$;
		\item $\Pi_1:(1,1)\mapsto (1,0),(1,0)\mapsto (1,1), (0,1)\mapsto(0,1)$.
	\end{itemize}
\end{itemize}
Then, $(\Pi_1,\Pi_2)$ is a non-trivial maximal element. The fibre product with respect to $(\Pi_1,\Pi_2)$ is $(G,E^0(F))$ where the sorting data $E^0(F)$ is given as follows:
\begin{itemize}
	\item $E^0(F)(0)=E^0(F)(G)=E^0(F)(N_{(1,0)})=E^0(F)(N_{(1,1)})=\CJ^{<\BN}$;
	\item $E^0(F)(N_{(0,1)})=\{J\in \CJ^{<\BN}:s_1\in ||J||\}$.
\end{itemize}
Take $(\Pi_1',\Pi_2')\in \CP((G,E^0(F)),(G,E^0(F)))$ such that
\begin{itemize}
	\item $A':=G$, $F_{A'}$ is a sorting data on $A'$ given as follows:
	\begin{itemize}
		\item $F_{A'}(0)=F_{A'}(A')=F_{A'}(N_{(1,1)})=\CJ^{<\BN}$;
		\item $F_{A'}(N_{(1,0)})=F_{A'}(N_{(0,1)})=\{J\in \CJ^{<\BN}:s_1\in ||J||\}$.
	\end{itemize}
	\item $\Pi_1',\Pi_2':(G,E^0(F))\rightarrow (A',F_{A'})$ given as follows:
	\begin{itemize}
		\item $\Pi_2'=\id$;
		\item $\Pi_1':(1,1)\mapsto (0,1), (1,0)\mapsto (1,1), (0,1)\mapsto (1,0)$.
	\end{itemize}
\end{itemize}
Then, $(\Pi_1',\Pi_2')$ is a non-trivial maximal element. The fibre product with respect to $(\Pi_1',\Pi_2')$ is $(G,E^1(F))$ where the sorting data $E^1(F)$ is full. Thus, the identity map from $(G,E^1(F))$ to $(G,F)$ is a universal SEP-cover of $(G,F)$.
\end{example}

Using the characterization of SEP in Theorem \ref{thm:FSIP=SIP=IP+homogeneous_sorting_data}, we show that there is a unique (up to isomorphism) universal SEP-cover of a given sorted profinite group.

\begin{fact}\label{fact:unique_universal_EP-cover}\cite[Theorem 2.7]{C1}
A profinite group $H$ has the unique universal embedding cover $E(H)$ and $$\IM(E(H))=\bigcup_{A\in \IM(H)}\IM(E(A))$$ where $E(A)$ is the unique universal embedding cover of $A$.
\end{fact}

\begin{theorem}\label{thm:uniqueness_SEP-cover}
There is a unique (up to isomorphism) universal SEP-cover $(G,F)$ of a sorted profinite group $(H,F_H)$.
\end{theorem}
\begin{proof}
By Theorem \ref{thm:existence_universal_SIP_cover}, each $(A,F_A)\in \FSIM(H,F_H)$ has the unique universal SEP-cover $(E(A),E(F_A))$. Let $$\Gamma:=\bigcup_{(A,F)\in \FSIM(H,F_H)}\FSIM(E(A),E(F_A)).$$

\bigskip

Let $G$ be the unique universal embedding cover of $H$. Note that $$\IM(G)=\{A:(A,F_A)\in \Gamma \mbox{ for some sorting data }F_A\}$$ by Fact \ref{fact:unique_universal_EP-cover}. Define a pre-sorting data $\hat F$ on $G$ as follows: For each $N\in \CN(G)$, $$\hat F(N):=\bigcup_{(A,F_A)\in \Gamma, A\cong G/N}F_A(\{e_A\}).$$

\begin{claim}\label{claim:presort_is_sort}
The pre-sorting data $\hat F$ is a sorting data.
\end{claim}
\begin{proof}
It is enough to show that $\hat F$ satisfies $(3)$ and $(4)$ in Definition \ref{def:sorting_data}, that is,
\begin{enumerate}
	\item[(3)] Suppose $N_1\subseteq N_2$ and $[G:N_1]\le k$. For $J\in \CJ^{<\mathbb N}$, $$J\in \hat F(N_1)\Rightarrow J^*_{\subseteq}(k,J)\in \hat F(N_2).$$
	\item[(4)] For $J_1\in \hat F(N_1)$ and $J_2\in \hat F(N_2)$, $J^*_{\cap}(J_1,J_2)\in \hat F(N_1\cap N_2)$.
\end{enumerate}

\bigskip

$(3)$ Let $N_1\subseteq N_2\in \CN(G)$ with $[G:N_1]\le k$ and let $J\in \CJ^{<\mathbb N}$ with $J\in \hat F(N_1)$. By the choice of $\hat F$, there are 
\begin{itemize}
	\item the universal SEP-cover $(E(A),E(F_A))$ of some $(A,F_A)\in \FSIM(H,F_H)$; and
	\item $(G/N_1,F_1)\in \FSIM(E(A),E(F_A))$ such that $J\in F_1(\{e_{G/N_1}\})$.
\end{itemize}
Then, $J^*_{\subseteq}(k,J)\in F_1(N_2/N_1)$. Consider a push-forward sorting data $F_2:=\pi_*(F_1)$ on $G/N_2$ where $\pi:G/N_1\rightarrow G/N_2$ is the projection. Then, $(G/N_2,F_2)\in \FSIM(E(A),E(F_A))$ and $J^*_{\subseteq}(k,J)\in F_2(\{e_{G/N_2}\})=F_1(N_2/N_1)\subseteq \hat F(N_2)$.\\

$(4)$ Take $N_1,N_2\in \CN(G)$, $J_1\in \hat F(N_1)$, and $J_2\in \hat F(N_2)$ arbitrary. By the choice of $\hat F$, for each $i=1,2$, there are
\begin{itemize}
	\item $\bar N_i\in \CN(H)$;
	\item the universal SEP-cover $(E(G/\bar N_i),E(\bar F_i))$ of some $(G/\bar N_i,\bar F_i)\in \FSIM(H,F_H)$; and
	\item $(G/N_i,F_i)\in \FSIM(E(G/\bar N_i),E(\bar F_i))$ such that $J_i\in F_i(\{e_{G/N_i}\})$.
\end{itemize}

Let $\bar F_0$ be a sorting data on $H/(\bar N_1\bar N_2)$ such that the projection from $H/\bar N_i$ to $H/(\bar N_1\bar N_2))$ is sorted for $i=1,2$. Let $(H/(\bar N_1\cap \bar N_2),\bar F_3)$ be the fibre product of $(H/\bar N_1,\bar F_1)$ and $(H/\bar N_2,\bar F_2)$ over $(H/(\bar N_1\bar N_2), \bar F_0)$. Let $\left(E(H/(\bar N_1\cap \bar N_2)),E(\bar F_3)\right)$ be the unique universal SEP-cover of $(H/(\bar N_1\cap \bar N_2),\bar F_3)$. 

Let $F_0$ be a sorting data on $G/(N_1N_2)$ such that the projection from $G/N_i$ to $G/(N_1N_2)$ is sorted for $i=1,2$. Let $(G/(N_1\cap N_2),F_3)$ be the fibre product of $(G/N_1,F_1)$ and $(G/N_2,F_2)$ over $(G/(N_1N_2),F_0)$.

Then, there is a morphism from $(E(H/(\bar N_1\cap \bar N_2)),E(\bar F_3))$ to $(E(H/\bar N_i),E(\bar F_i))$ and so there is a morphism from $(E(H/(\bar N_1\cap \bar N_2)),E(\bar F_3))$ to $(G/N_i,F_i)$ for $i=1,2$. Since $(G/(N_1\cap N_2),F_3)$ is the fibre product of $(G/N_1,F_1)$ and $(G/N_2,F_2)$ over $(G/(N_1N_2),F_0)$, there is a morphism from $(E(H/(\bar N_1\cap \bar N_2)),E(\bar F_3))$ to $(G/(N_1\cap N_2),F_3)$. So, we have the following diagram:

$$
\begin{tikzcd}
(G/N_1,F_1)  & (E(H/\bar N_1),E(\bar F_1)) \arrow[l] \arrow[r] & (H/\bar N_1)  \\
(G/(N_1\cap N_2),F_3) \arrow[u] \arrow[d] &(E(H/(\bar N_1\cap \bar N_2)),E(\bar F_3)) \arrow[d,  dashrightarrow] \arrow[u,  dashrightarrow] \arrow[r] \arrow[l,  dashrightarrow]& (H/(\bar N_1\cap \bar N_2),\bar F_3) \arrow[u] \arrow[d]\\
(G/N_2,F_2)  & (E(H/\bar N_2),E(\bar F_2)) \arrow[l] \arrow[r] &(H/\bar N_2,\bar F_2)
\end{tikzcd}
$$

\noindent Therefore, $$(G/(N_1\cap N_2),F_3)\in \FSIM(E(H/(\bar N_1\cap \bar N_2)),E(\bar F_3))\subseteq \Gamma,$$ and so $$J^*_{\cap}(J_1,J_2)\in F_3(\{e_{G/(N_1\cap N_2)}\})\subseteq \hat F(N_1\cap N_2).$$
\end{proof}

By Theorem \ref{thm:FSIP=SIP=IP+homogeneous_sorting_data}(3), $(G,\hat F)$ has SEP. Now we show that $(G,\hat F)$ is the unique universal SEP-cover of $(H,F_H)$.

\begin{claim}\label{claim:minimality_hatF}
For any SEP-cover $(G',F')$ of $(H,F_H)$, any epimorphism $\varphi:G'\rightarrow G$ is sorted, that is, for each $N\in \CN(G)$, $$\hat F(N)\subseteq F'(\varphi^{-1}[N]).$$
\end{claim}
\begin{proof}
Let $(G',F')$ be a SEP-cover of $(H,F_H)$ and let $\varphi$ be an epimorphism from $G'$ to $G$. Take $N\in \CN(G)$ and $J\in \hat F(N)$ arbitrary. We show that $J\in F'(\varphi^{-1}[N])$. There is a sorting data $F$ on $G/N$ with $(G/N,F)\in \Gamma$ such that $J\in F(\{e_{G/N}\})$. Since $(G/N,F)\in \Gamma\subseteq \FSIM(G',F')$ and $G'/\varphi^{-1}[N]\cong G/N$, by Lemma \ref{lem:descritpion_pushforward_sortingdata_for_isoimage}, $J\in F(\{e_{G/N}\})\subseteq F'(\varphi^{-1}[N])$.
\end{proof}

Let $(G',F')$ be a SEP-cover of $(H,F_H)$. Since $G'$ is an embedding cover of $H$ and $G$ is the universal embedding cover of $H$, there is an epimorphism from $\varphi:G'\rightarrow G$. By Claim \ref{claim:minimality_hatF}, the epimorphism $\varphi$ is sorted. This implies that $(G,F)$ is a universal SEP-cover of $(H,F_H)$.

It remains to show that $(G, \hat F)$ is unique up to isomorphism. Let $(G',F')$ be a universal SEP-cover of $(H,F_H)$. Since $G'$ is a universal embedding cover of $H$, there is an isomorphism $\varphi:G'\rightarrow G$. We show that the isomorphism $\varphi:G'\rightarrow G$ induces an isomorphism $\varphi:(G',F')\rightarrow (G,\hat F)$. It is enough to show that for any $N\in \CN(G)$, $$\hat F(N)=F'(\varphi^{-1}[N]).$$ Take $N\in \CN(G)$ arbitrary. By Claim \ref{claim:minimality_hatF}, $\varphi$ is sorted so that $\hat F(N)\subseteq F'(\varphi^{-1}[N])$. Since $(G',F')$ is a universal SEP-cover of $(H,F_H)$, there is a morphism $\psi:(G,\hat F)\rightarrow (G',F')$. So, we have that 
\begin{itemize}
	\item $G/N\cong G'/\varphi^{-1}[N]\cong G/\psi^{-1}[\varphi^{-1}[N]]$; and
	\item $\hat F(N)\subseteq F'(\varphi^{-1}[N])\subseteq \hat F(\psi^{-1}[\varphi^{-1}[N]])$.
\end{itemize}
Since $G/N\cong G/\psi^{-1}[\varphi^{-1}[N]]$, by Lemma \ref{lem:descritpion_pushforward_sortingdata_for_isoimage}, $$\hat F(\psi^{-1}[\varphi^{-1}[N]])=\hat F(N)$$ and so $\hat F(N)= F'(\varphi^{-1}[N])= \hat F(\psi^{-1}[\varphi^{-1}[N]])$.

\end{proof}

\section{Model theory of sorted profinite groups having SEP}\label{Section:model theory of sorted profinite group with SIP}
In \cite[Theorem 2.4]{C1}, Chatzidakis showed that the theory of a complete system of a profinite group having embedding property is $\omega$-stable and gave a description of forking independence. In this section, our main goal is to generalize this phenomenon into the sorted profinite group having SEP when $\CJ$ is countable.

\begin{definition}\label{def:basic_terminologies_SCS}
\begin{enumerate}
	\item A {\em subsystem} of $S$ is a substructure of $S$ which is a model of $SCS$.

	\item Let $X\subseteq S$. There is the smallest subsystem $S_X$ containing $X$. In this case, we say that $S_X$ is {\em generated by $X$}.
	
	\item We say that a subsystem $S'$ is {\em finitely generated} if there is a  finite subset $X'$ such that $S'=S_{X'}$.
\end{enumerate}
\end{definition}

\begin{definition}\label{def:basic_terminoligies_2}
\begin{enumerate}	
	\item Let $X$ be a subset of $S$. We say that $X$ is {\em locally full} if for each $x\in X$ and for each $(k,J)\in \BN\times \CJ^{<\BN}$, $$[x]\cap m(k,J)\subseteq X.$$
%	 We say that $X$ is {\em full} if for each $s\in S$, if there is $x\in X$ such that $x\sim s$, then $s\in X$. We write $X^{full}:=\{s\in S:(\exists x\in X)(s\sim x)\}$. 
	
	\item A subset $X$ of $S$ is called {\em relatively dense} if for each $s\in S_X$, there is $x\in X$ such that $x\le s$. 
	
	\item A subset $X$ of $S$ is called a {\em presystem} if it is locally full and relatively dense. 
\end{enumerate}
\end{definition}
\noindent Note that any subsystem is already locally full because for a complete system $S$ and for $s\in S$ and $(k,J)\in \BN\times \CJ^{<\BN}$, $$s\in m(k,J)\Leftrightarrow 1\le |[s]\cap m(k,J)|\le k.$$

\begin{remark}\label{rem:smallest subsystem}
\begin{enumerate}
	\item Let $S'$ be a subsystem generated by $X$. Then, we can take $X$ as a presystem. Also, if $X$ is finite, we can take $X$ as a finite presystem.
	
	\item If $X$ is a presystem, then $S_X\subseteq \dcl(X)$. More precisely, for each $x\in S_X\cap m(k,J)$, there is $a_x\in X$ such that $x$ is the unique element in $S$ satisfying $S_X\models C(a_x,x)\wedge x\in m(k,J)$. So, any embedding from $S_X$ to $S$ is uniquely determined by the image of $X$.

	\item Let $S'$ be a finitely generated sorted complete system. By (1), there is a finite presystem $X$ generating $X$. Without loss of generality, we may assume that $m(k,J)\cap S'\subseteq X$ for $(k,J)\in \mathbb N\times \mathcal J^{<\mathbb N}$ with $X\cap m(k,J)\neq\emptyset$. We call such a presystem {\em nice}. 
	
	\item Let $S'$ be a finitely generated sorted complete system and let $X$ be a nice finite presystem generating $S'$. Note that for any $\CL_{SCS}(\CJ)$-embedding $\Phi:S'\rightarrow S'$, the restriction $\Phi|_X$ is an automorphism of $X$. Conversely, any automorphism of $X$ determines a unique automorphism of $S'$ by (2). Also, there is a formula $\theta_X(\bar x)$ such that $SCS\models \theta_X(\bar x)\rightarrow \qftp(X)$.	

\end{enumerate}
\end{remark}

Recall that $S/\sim$ forms a distributive lattice, denoted by $(S/\sim,\vee,\wedge)$ induced from the partial order $\le$ on $S$ (cf. \cite[Section 3.2]{HoLee}).
\begin{definition}\label{def:meet_join}
Let $A$ and $B$ be subsets of $S$.
\begin{enumerate}
	\item $\min A:=\{a\in A:\forall b\in A(a\le b) \}$.
	\item $A\vee B:=\min \{c\in S:[c]=[a]\vee[b],\ a\in  A,\ b\in B\}$.
	\item We write $A\le B$ if $a\le b$ for every $a\in A$ and $b\in B$.
	\item We write $A\sim B$ if $a\sim b$ for every $a\in A$ and $b\in B$.
\end{enumerate}
\end{definition}
\noindent For $a\in S$, we write $a\vee A$ for $\{a\}\vee A$. Note that if $A\vee B\neq \emptyset$, then $A\vee B=[a]$ for any $a\in A\vee B$.

\begin{remark}\label{rem:meet_join_subsystems}
\begin{enumerate}
	\item Let $S_1$ and $S_2$ be subsystems of $S$, and let $S_0=S_1\cap S_2$. Suppose $S_1$ is finitely generated. Then, $\min S_1\neq \emptyset$, $S_1\vee S_2\neq\emptyset$, and $\min S_0\sim S_1\vee S_2$.

	\item Let $S'$ be a sorted complete system with $\min S'\neq \emptyset$. Choose $a\in \min S'\cap m(k,J)\neq \emptyset$. Then, $G(S')\cong [a]\cap m(k,J)$ and for each $x\in m(k',J')$, there is a unique $a_x\in [a]\cap m(k,J)$ such that $x$ is the unique element in $S'$ satisfying $S'\models C(a_x,x)\wedge x\in m(k',J')$.
	
	\item A sorted complete system $S'$ is finitely generated if and only if $\mathcal G(S')$ is a finitely sorted finite group.

\end{enumerate}
\end{remark}
\begin{proof}
$(1)$ Suppose $S_1$ is finitely generated by a finite subset $X$ of $S_1$. We may assume that $X$ is relatively dense. Since $X$ is finite, $\min X\neq \emptyset$, and since $X$ is relatively dense, $\min X\subset \min S_1$.

Next, we show that $S_1\vee S_2$ is not empty. Choose $a_0\in \min S_1$ arbitrary. Then, there are only finitely many elements in the set $\{x\in S:a_0\le x\}$ up to $\sim$ and so there is a minimal element $c\in S$ such that $[c]=[a_0]\vee [b]$ for some $b\in S_2$. So, $c\in S_1\vee S_2$ and $S_1\vee S_2\neq\emptyset$.

For all $a_0\in \min S_1$ and for all $c\in S_1\vee S_2$, $a_0\le c$, and so there are only finitely many elements in $S_1\vee S_2$ up to $\sim$. Thus, $S_1\vee S_2\neq \emptyset$.

It remains to show that $\min S_0\sim S_1\vee S_2$. Since $S_0\subset S_1$, we have that $\min S_0\neq \emptyset$. Take $c\in S_1\vee S_2$ and $d\in \min S_0$ arbitrary. Since $d\in S_1\cap S_2$, it is clear that $c\le d$. Take $a\in S_1\cap m(k,J)$ and $b\in S_2\cap m(k',J')$ such that $[c]=[a]\vee[b]$ and $c\in m(kk',J_{\subset}^*(k,J))$. Since $S_1$ and $S_2$ are subsystems, there is $c'\in S_1\cap S_2\cap m(kk',J_{\subset}^*(k,J)^{\frown} JJ')$ such that $c'\sim c$. Thus, $d\le c'\le c$.\\

$(2)$ Fix $a\in \min S'\cap m(k,J)$ for some $(k,J)\in \mathbb N\times \CJ^{<\mathbb N}$. Since $G(S)$ is the inverse limit of the group $[x]\cap m(k',J')$ of $x\in m(k',J')$ with the transition maps induced from $C$, $G(S)\cong [a]\cap m(k,J)$. Also for each $b\in S$, after putting $C(b,S):=\{x\in S:S\models C(b,x)\}$, $C(b,S)\cap m(k',J')$ is linearly ordered by $\le$ for each $(k',J')\in \mathbb N\times \CJ^{<\mathbb N}$. Thus, for each $x\in m(k',J')$, there is a unique $a_x\in [a]\cap m(k,J)$ such that $x$ is the unique element in $S$ satisfying $S\models C(a_x,x)\wedge x\in m(k',J')$.\\

$(3)$ Let $S'\models SCS$. Suppose $S'$ is finitely generated by a finite presystem $X$ of $S'$. Since $X$ is relatively dense in $S'$ and $X$ is finite, $\min X\neq \emptyset$ and $\min X\sim \min S'$. By Remark \ref{rem:meet_join_subsystems}(2), $G(S')$ is finite. Also, since $S'=S_{X'}$ and $X'$ is finite, $F(S')$ is finitely generated. So, $\CG(S')=(G(S'),F(S'))$ is finitely sorted. 

Conversely, suppose $\mathcal G(S')=(G(S'),F(S'))$ is a finitely sorted finite group. Since $\CN(G(S'))$ is finite, we may assume that $F(S')$ is generated by a pre-sorting data $\hat F$ such that for each $N\in \CN(G(S'))$, $\hat F(N)$ is finite. Define a presystem $X$ as follows: For $(k,J)\in \mathbb N\times \CJ^{<\mathbb N}$, $$X\cap m(k,J)=\bigcup_{N\in \CN(G(s')),[G(S'):N]\le k, J\in \hat F(N)}G(S')/N\times \{k\},$$ which is finite because $\hat F(N)$ is finite for each $N\in \CN(G(S'))$. Then, the finite presystem $X$ generates $S'$.

\end{proof}

From the fibre product in Definition \ref{def:fibre_product}, we have the following lemma.
\begin{lemma}\label{lem:fibre_prod_in_sortedcompletesystem}
Let $S_0$, $S_1$, and $S_2$ be subsystems of $S$ and let $S_3=S_{S_1\cup S_2}$. Suppose  that $\min S_1\neq\emptyset$ and $S_0\subset S_1\cap S_2$. Consider the following inclusions:
\begin{itemize}
	\item $\iota_{S_0,S_1}:S_0\rightarrow S_1$, $\iota_{S_0,S_2}:S_0\rightarrow S_2$;
	\item $\iota_{S_1,S_3}:S_{1}\rightarrow S_{3}$;
	\item $\iota_{S_2,S_3}:S_{2}\rightarrow S_{3}$.
\end{itemize}
\begin{enumerate}
	\item Suppose $S_1\vee S_2\sim \min S_0$. Then, $S_3$ is the {\em co-fibre product} of $S_1$ and $S_2$ over $S_0$, that is, we have the following cartesian diagram:
$$
\begin{tikzcd}
\CG(S_3)\arrow[rr, "\CG(\iota_{S_2,S_3})"] \arrow[d, "\CG(\iota_{S_1,S_3})"']& & \CG(S_2) \arrow[d, "\CG(\iota_{S_0,S_2})"]\\
\CG(S_1) \arrow[rr, "\CG(\iota_{S_0,S_1})"'] & & \CG(S_0)
\end{tikzcd}
$$

	\item Let $S_1'$ be a subsystem of $S$ such that $S_0\subset S_1'$ and $S_1'\vee S_2\sim \min S_0$. Suppose there is an isomorphism $f:S_{1}\rightarrow S_{1}'$ making the following diagram commute:
$$
\begin{tikzcd}
S_0 \arrow[rr, "\iota_{S_0,S_1}"] \arrow[d, "\id"'] && S_{1} \arrow[d, "f"]\\
S_0 \arrow[rr, "\iota_{S_0,S_1'}"'] && S_{1}'
\end{tikzcd}
$$
Then, there is an isomorphism from $g:S_3\rightarrow S_3'$ extending $f\cup\id_{S_2}$, where $S_3'=S_{S_1'\cup S_2}$.
\end{enumerate}
Thus, if one of $S_1$ and $S_2$ is finitely generated, the subsystem $S_{S_1\cup S_2}$ is a co-fibre product of $S_1$ and $S_2$ over $S_1\cap S_2$
\end{lemma}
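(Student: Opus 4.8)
The plan is to exploit the contravariant equivalence between $\SPG_{\CJ}$ and the category of sorted complete systems recorded at the end of Subsection \ref{Section:sorted_complete_system}, so that each assertion about a co-fibre product of complete systems becomes an assertion about a fibre product of sorted profinite groups, where Remark \ref{rem:char_fired_prod} and Definition/Remark \ref{def/rem:fibre_product} are available. Throughout, write $G = G(S)$ and, for $i \in \{0,1,2\}$, let $K_i = \ker(G \to G(S_i))$ be the open normal subgroup with $G(S_i) = G/K_i$. The inclusions $S_0 \subseteq S_i$ dualize to $K_1, K_2 \subseteq K_0$, and since $S_3 = S_{S_1 \cup S_2}$ is closed under meets one gets $G(S_3) = G/(K_1 \cap K_2)$. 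Under this dictionary I read the hypothesis $S_1 \vee S_2 \sim \min S_0$ (via Definition \ref{def:meet_join}) as the group-theoretic equality $K_0 = K_1 K_2$, equivalently $\ker p = \ker p_1 \cdot \ker p_2$ with $\ker p_1 \cap \ker p_2 = \{e\}$ in $G(S_3)$.

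For (1) I would first verify that the underlying diagram of profinite groups is cartesian. Writing $p_1 = G(\iota_{S_1, S_3})$, $p_2 = G(\iota_{S_2, S_3})$ and $p = G(\iota_{S_0, S_3})$, one has $\ker p_i = K_i/(K_1 \cap K_2)$ and $\ker p = K_0/(K_1 \cap K_2)$; the translated hypothesis $K_0 = K_1 K_2$ then yields $\ker p = \ker p_1 \times \ker p_2$, so condition (4) of Remark \ref{rem:char_fired_prod} applies and the square is a fibre product of profinite groups. It then remains to match sorting data: I must show $F(S_3)$ coincides with the sorting data $F$ of Definition/Remark \ref{def/rem:fibre_product} generated by the pullbacks $F_X(p_1^{-1}[N_1]) = F_1(N_1)$ and $F_X(p_2^{-1}[N_2]) = F_2(N_2)$. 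One inclusion is immediate, since $p_1, p_2$ are morphisms of complete systems and hence sorted, so $F \subseteq F(S_3)$ by the minimality in Definition/Remark \ref{def/rem:fibre_product}; the reverse inclusion follows from the fact that every sort occurring at a level of $S_3 = S_{S_1 \cup S_2}$ is produced from sorts occurring in $S_1$ and $S_2$ through the complete-system operations, which is exactly how $F$ is generated from $F_X$. This identifies $(G(S_3), F(S_3))$ with the sorted fibre product, so the square is cartesian in $\SPG_{\CJ}$.

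For (2) the idea is to transport the cartesian description of (1) across $f$. Applying (1) twice (legitimate, as both $S_1 \vee S_2 \sim \min S_0$ and $S_1' \vee S_2 \sim \min S_0$ hold) gives $G(S_3) = G(S_1) \times_{G(S_0)} G(S_2)$ and $G(S_3') = G(S_1') \times_{G(S_0)} G(S_2)$. Dualizing $f$ yields an isomorphism $\bar f = G(f) : G(S_1') \to G(S_1)$ with $G(\iota_{S_0,S_1}) \circ \bar f = G(\iota_{S_0,S_1'})$, i.e. $\bar f$ respects the maps to $G(S_0)$. I then define $\bar g : G(S_3') \to G(S_3)$ on the fibre products by $(x,y) \mapsto (\bar f(x), y)$; compatibility of $\bar f$ over $G(S_0)$ makes this well-defined, it visibly commutes with both projections, and since $\bar f$ is an isomorphism $\bar g$ is a bijective homomorphism, hence an isomorphism of profinite groups. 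Sortedness of $\bar g$ and of $\bar g^{-1}$ is forced by the minimality of the fibre-product sorting data together with the sortedness of $\bar f$, so $\bar g$ is an isomorphism in $\SPG_{\CJ}$, and dualizing back produces $g = S(\bar g) : S_3 \to S_3'$. Reading off $p_1 \circ \bar g = \bar f \circ p_1'$ and $p_2 \circ \bar g = p_2'$ through the equivalence gives $g \circ \iota_{S_1,S_3} = \iota_{S_1',S_3'} \circ f$ and $g \circ \iota_{S_2,S_3} = \iota_{S_2,S_3'}$, i.e. $g$ extends $f \cup \id_{S_2}$.

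I expect two points to be genuinely delicate. First, the precise translation of $S_1 \vee S_2 \sim \min S_0$ into the kernel identity $K_0 = K_1 K_2$ demands care, since $\min$, $\vee$ and $\sim$ must be read off correctly from the order and group structure carried by the complete system. Second, and more essentially for the sorted setting, is the sorting-data bookkeeping: controlling $F(S_3)$ through the generation of $S_{S_1 \cup S_2}$ and matching it against the minimal fibre-product sorting data. It is worth emphasizing that the extension in (2) succeeds only because $f$ is an isomorphism: the naive pushout universal property is unavailable here, exactly as flagged after Remark \ref{rem:char_fired_prod} (the induced map into a fibre product need not be surjective), so pushing an arbitrary compatible pair of morphisms through the square would fail, whereas a bijective $\bar f$ yields a bijective, hence sorted, $\bar g$ with no surjectivity obstruction.
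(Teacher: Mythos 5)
Your proposal is correct and takes essentially the route the paper intends: the paper states this lemma without a written proof, asserting it follows from Definition/Remark \ref{def/rem:fibre_product} via the contravariant equivalence of Subsection \ref{Section:sorted_complete_system}, and your dualization --- translating $S_1\vee S_2\sim \min S_0$ into the kernel identity $K_0=K_1K_2$, invoking Remark \ref{rem:char_fired_prod}(4) for the underlying profinite square, matching $F(S_3)$ against the minimal generated sorting data, and transporting along $\bar f$ with minimality forcing sortedness of $\bar g$ and $\bar g^{-1}$ in (2) --- is precisely that omitted argument made explicit. One cosmetic slip: the kernels $K_i=\ker(G\rightarrow G(S_i))$ are closed but not necessarily open normal subgroups, since the subsystems $S_i$ need not be finitely generated; nothing in your argument actually uses openness, so this does not affect correctness.
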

\begin{proof}
For subsystems $S'\subseteq S'' \subseteq S$, we write $\iota_{S',S''}$ for the inclusion from $S'$ to $S''$. Let $S_0$, $S_1$, and $S_2$ be subsystems of $S$ and let $S_3:=S_{S_1\cup S_2}$. Suppose $\min S_1\neq \emptyset$, $S_0\subset S_1\cap S_2$, and  $S_1\vee S_2\sim \min S_0$.\\

$(1)$ Put $G:=G(S)$. Let $N_i:=\ker(\CG(\iota_{S_i,S}))$ so that we can identify $G(S_i)=G/N_i$ for $i=0,1,2,3$. Let $$X:=\{\CG(\iota_{S_1,S_3})^{-1}[N_1]:N_1\in \CN\left(G(S_1)\right)\}\cup\{\CG(\iota_{S_2,S_3})^{-1}[N_2]:N_2\in \CN\left(G(S_2)\right)\}$$ and let $F_X$ be a pre-sorting data on $X$ given as follows: For $N_1\in \CN(G(S_1))$ and $N_2\in \CN(G(S_2))$, $$F_X(\CG(\iota_{S_1,S_3})^{-1}[N_1]):=F(S_1)(N_1),\ F_X(\CG(\iota_{S_2,S_3})^{-1}[N_2]):=F(S_2)(N_2).$$

\noindent We show that the diagram
$$
\begin{tikzcd}
\CG(S_3)\arrow[rr, "\CG(\iota_{S_2,S_3})"] \arrow[d, "\CG(\iota_{S_1,S_3})"']& & \CG(S_2) \arrow[d, "\CG(\iota_{S_0,S_2})"]\\
\CG(S_1) \arrow[rr, "\CG(\iota_{S_0,S_1})"'] & & \CG(S_0)
\end{tikzcd}
$$
is cartesian, that is,
\begin{itemize}
	\item $G(S_3)$ is the fibre product of $G(S_1)$ and $G(S_2)$ over $G(S_0)$; and
	\item $F(S_3)$ is generated by a pre-sorting data $F_X$.
\end{itemize}

\medskip

Since $S_3=S_{S_1\cup S_2}$ and $S_1\vee S_2\sim \min S_0$, we have $N_3=N_1\cap N_2$ and $N_0=N_1N_2$. Recall that $G/(N_1\cap N_2)$ is the fibre product of $G/N_1$ and $G/N_2$ over $G/N_1N_2$ and so $G(S_3)$ is the fibre product of $G(S_1)$ and $G(S_2)$ over $G(S_0)$. Since $S_3$ is the smallest subsystem containing $S_1$ and $S_2$, the sorting data $F(S_3)$ is generated by the pre-sorting data $F_X$.\\

$(2)$ Let $S_1'$ be a subsystem of $S$ such that $S_0\subset S_1'$ and $S_1'\vee S_2\sim \min S_0$. Suppose there is an isomorphism $f:S_1\rightarrow S_1'$ such that $\iota_{S_0,S_1'}=f\circ \iota_{S_0,S_1}$.\\ 

Let $$X':=\{\CG(\iota_{S_1',S_3'})^{-1}[N_1']:N_1'\in \CN\left(G(S_1')\right)\}\cup\{\CG(\iota_{S_2,S_3'})^{-1}[N_2]:N_2\in \CN\left(G(S_2)\right)\}$$ and let $F_{X'}$ be a pre-sorting data on $X'$ given as follows: For $N_1'\in \CN(G(S_1'))$ and $N_2\in \CN(G(S_2))$, $$F_{X'}(\CG(\iota_{S_1',S_3'})^{-1}[N_1']):=F(S_1')(N_1'),\ F_{X'}(\CG(\iota_{S_2,S_3'})^{-1}[N_2]):=F(S_2)(N_2).$$ Then, the sorting data $F(S_3')$ is generated by $F_{X'}$.\\

We first identify 
\begin{align*}
G(S_3)\cong G(S_1)\times_{G(S_0)}G(S_2)&,\ x\mapsto (\CG(\iota_{S_1,S_3}(x),\CG(\iota_{S_2,S_3})(x)),\\
G(S_3')\cong G(S_1')\times_{G(S_0)}G(S_2)&,\ x'\mapsto (\CG(\iota_{S_1',S_3'}(x'),\CG(\iota_{S_2,S_3'})(x')),
\end{align*}
where
\begin{align*}
G(S_1)\times_{G(S_0)}G(S_2)&:=\{(a,b)\in G(S_1)\times G(S_2):\CG(\iota_{S_0,S_1})(a)=\CG(\iota_{S_0,S_2})(b)\},\\
G(S_1')\times_{G(S_0)}G(S_2)&:=\{(a',b)\in G(S_1')\times G(S_2):\CG(\iota_{S_0,S_1'})(a')=\CG(\iota_{S_0,S_2})(b)\},
\end{align*}
and so we have an isomorphism $$\CG(f)\times \CG(\iota_{S_0,S_2}):G(S_3')\rightarrow G(S_3),\ (a',b)\mapsto (\CG(f)(a'),b).$$

Since for each $N_1'\in \CN(G(S_1'))$, $$F(S_1')(N_1')=F(S_1)(\CG(f)^{-1}[N_1'])$$ and so the isomorphism $\CG(f)\times \CG(\iota_{S_0,S_2}):G(S_3')\rightarrow G(S_3)$ induces an isomorphism from $\CG(S_3')$ to $\CG(S_3)$. Then, the isomorphism $\mathcal S(\CG(f)\times \CG(\iota_{S_0,S_2})):S_3\rightarrow S_3'$ is the desired one. 
\end{proof}

%From the fibre product in Definition/Remark \ref{def/rem:fibre_product}, we have the following lemma.
%\begin{lemma}\label{lem:fibre_prod_in_sortedcompletesystem}
%Let $S_0$, $S_1$, and $S_2$ be subsystems of $S$ such that $S_0\subseteq S_1\cap S_2$, and let $S_3=S_{S_1\cup S_2}$. Consider the following inclusions:
%\begin{itemize}
%	\item $\iota_{S_0,S_1}:S_0\rightarrow S_1$, $\iota_{S_0,S_2}:S_0\rightarrow S_2$;
%	\item $\iota_{S_1,S_3}:S_{1}\rightarrow S_{3}$;
%	\item $\iota_{S_2,S_3}:S_{2}\rightarrow S_{3}$.
%\end{itemize}
%\begin{enumerate}
%	\item Suppose $S_1\vee S_2\sim \min S_0$. Then, $S_3$ is the {\em co-fibre product} of $S_1$ and $S_2$ over $S_0$, that is, we have the following cartesian diagram:
%$$
%\begin{tikzcd}
%G(S_3)\arrow[rr, "G(\iota_{S_2,S_3})"] \arrow[d, "G(\iota_{S_1,S_3})"']& & G(S_{2}) \arrow[d, "G(\iota_{S_0,S_2})"]\\
%G(S_{1}) \arrow[rr, "G(\iota_{S_0,S_1})"'] & & G(S_0)
%\end{tikzcd}
%$$
%
%
%	\item Let $S_1'$ be a subsystem of $S$ such that $S_0\subseteq S_1'$ and $S_1'\vee S_2\sim \min S_0$. Suppose there is an isomorphism $f:S_{1}\rightarrow S_{1}'$ making the following diagram commute:
%$$
%\begin{tikzcd}
%S_0 \arrow[rr, "\iota_{S_0,S_1}"] \arrow[d, "\id"'] && S_{1} \arrow[d, "f"]\\
%S_0 \arrow[rr, "\iota_{S_0,S_1'}"'] && S_{1}'
%\end{tikzcd}
%$$
%Then there is an isomorphism $g:S_3\rightarrow S_3'$ extending $f\cup\id_{S_2}$, where $S_3'=S_{S_1'\cup S_2}$.
%\end{enumerate}
%\end{lemma}

\begin{remark}\label{rem:co-fibre product_and_smallestsubsystem}
For any subsystems $S_1$ and $S_2$ of $S$, the subsystem $S_{S_1\cup S_2}$ is a co-fibre product of $S_1$ and $S_2$ over $S_0$, where $S_0$ is the subsystem generated by $(S_1\vee S_2)\cap S_1\cap S_2$.
\end{remark}

Next, we introduce a notion of the co-sorted embedding property, which is a dual notion of SEP for sorted profinite groups. Let $S$ be a sorted complete system.
\begin{definition}\label{def:co-(F)SIP}
We say that $S$ has {\em co-sorted embedding property} (co-SEP) if for any finitely generated subsystems $S_1,S_2\subseteq S$, and for any embedding $\Pi:S_2\rightarrow S_1$ and any embedding $\Phi:S_2\rightarrow S_2$, there is an embedding $\Psi:S_1\rightarrow S$ such that 
$$
\begin{tikzcd}
S_2 \arrow[r, "\Pi"] \arrow[d, "\Phi"'] & S_1 \arrow[d, dashed, "\Psi"]\\
S_2 \arrow[r, "\iota"']& S
\end{tikzcd},
$$ where $\iota$ is the inclusion.
\end{definition}

\begin{remark}\label{rem:axiomatizability_co-SIP}
By Remark \ref{rem:smallest subsystem}, co-SEP is first order axiomatizable in the language $\CL_{SCS}(\CJ)$. Denote the theory of sorted complete systems having co-SEP by $SCS_{SEP}$.
\end{remark}
\begin{proof}
For each finitely generated sorted complete systems $S_1$ and $S_2$ and for an embedding $\Pi:S_2\rightarrow S_1$, fix nice finite presystems $X_1$ and $X_2$ generating $S_1$ and $S_2$ respectively such that $\Pi[X_2]\subseteq X_1$. Consider a sentence $\varphi_{S_1,S_2,\Pi}$ given by
$$\forall \bar x_1\bar x_2\left(\theta_{X_1}(\bar x_1)\wedge\theta_{X_2}(\bar X_2)\rightarrow \bigwedge_{\Phi}\exists \bar x_3\left(\theta_{X_1}(\bar x_3)\wedge \bar x_2\subseteq \bar x_3\wedge\bigvee_{\Psi}(\Psi\circ \Pi)(\bar x_2)=\Phi(\bar x_2) \right) \right),$$ where $\Phi:\bar x_2\rightarrow \bar x_2$, and $\Psi:\bar x_1\rightarrow \bar x_3$ are $\CL_{SCS}(\CJ)$-embeddings. Then, the theory of sorted complete systems having co-SEP is given by 
$$
SCS\cup\{\varphi_{S_1,S_2,\Pi}:\ S_1,S_2\mbox{ are finitely generated},\ \Pi:S_2\rightarrow S_1\}.
$$
\end{proof}

\noindent Since the category $\SPG$ of sorted profinite groups and the category of sorted complete systems are equivalent by contravariant functors, we have the following relationship between SEP and co-SEP.
\begin{remark}\label{rem:(F)SIP_co-(F)SIP}
Let $(G,F)$ be a sorted profinite group and let $S$ be the sorted complete system of $(G,F)$. By Theorem \ref{thm:FSIP=SIP=IP+homogeneous_sorting_data}, we have that $(G,F)$ has SEP if and only if $(G,F)$ has FSEP if and only if $S$ has co-SEP.
\end{remark}
%{\color{green}
%\noindent We have the following result dual to Corollary \ref{lem:descritpion_pushforward_sortingdata_for_isoimage}.
%\begin{proposition}\label{prop:uniquness_full_substructure}
%Let $S\models SCS_{SEP}$ and let $S'\models SCS$. For any embeddings $\varphi_1:S'\rightarrow S$ and $\varphi_2:S'\rightarrow S$, the two substructures $\varphi_1[S']^{full}$ and $\varphi_2[S']^{full}$ of $S$ are isomorphic.
%\end{proposition}
%}
For a sorted complete system $S$, let $\coFSIM(S)$ be the set of isomorphism classes of finitely generated subsystems of $S$. We have  the following analogue to \cite[Theorem 2.2]{C1}.
\begin{lemma}\label{lem:key_lemma_for_alpeh_0_categoricity_for_SIP}
Let $S_1$ and $S_2$ be sorted complete systems having co-SEP. Suppose $|S_1|=|S_2|=\aleph_0$ and $\coFSIM(S_1)=\coFSIM(S_2)$. Then, $S_1\cong S_2$.
\end{lemma}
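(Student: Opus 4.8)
The plan is to run a standard back-and-forth between $S_1$ and $S_2$, using the hypothesis $\coFSIM(S_1)=\coFSIM(S_2)$ to locate an abstract copy of the relevant finitely generated subsystem, and using co-SEP to reposition that copy so that it sits correctly over the part already matched. Fix enumerations $S_1=\{a_n:n\in\omega\}$ and $S_2=\{b_n:n\in\omega\}$, which exist since $|S_1|=|S_2|=\aleph_0$. I will construct an increasing chain of isomorphisms $f_n\colon A_n\to B_n$ between finitely generated subsystems $A_n\subseteq S_1$ and $B_n\subseteq S_2$, arranging at even stages that $a_m\in A_{n}$ for some bookkept $m$ and at odd stages that $b_m\in B_n$, so that $\bigcup_n A_n=S_1$, $\bigcup_n B_n=S_2$, and the union $f=\bigcup_n f_n$ is a bijection $S_1\to S_2$. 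Since each relation symbol of $\CL_G(\CJ)$ has finite arity and each $f_n$ is an $\CL_G(\CJ)$-embedding, $f$ preserves and reflects all relations, hence is an isomorphism.

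For the base case, let $A_0$ and $B_0$ be the smallest subsystems of $S_1$ and $S_2$ (generated by the empty set, corresponding to the trivial quotient $G/G$); they are finitely generated and determined by $SCS$, hence isomorphic, and I take $f_0$ to be the unique such isomorphism. For the forth step, suppose $f_n\colon A_n\to B_n$ is given and let $a$ be the next element of $S_1$ to be absorbed. Let $A_{n+1}\subseteq S_1$ be the subsystem generated by $A_n\cup\{a\}$; it is again finitely generated, since it is locally full and each $[a]\cap m(k,J)$ is finite, and $A_n\subseteq A_{n+1}$. By $\coFSIM(S_1)=\coFSIM(S_2)$ there is a finitely generated subsystem $C\subseteq S_2$ together with an isomorphism $g\colon A_{n+1}\to C$. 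I now have two embeddings of $A_n$ into $S_2$, namely $f_n$ (with image $B_n$) and $g\restriction A_n$ (with image $g[A_n]\subseteq C$), and the goal is to modify $g$ so that its restriction to $A_n$ becomes $f_n$.

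This is where co-SEP enters. Apply Definition \ref{def:co-(F)SIP} inside $S=S_2$ with the role of its ``$S_2$'' played by $g[A_n]$, the role of ``$S_1$'' by $C$, the map $\Pi$ the inclusion $g[A_n]\hookrightarrow C$, and $\Phi:=f_n\circ(g\restriction A_n)^{-1}\colon g[A_n]\to S_2$ (an embedding with image $B_n$). Co-SEP yields an embedding $\Psi\colon C\to S_2$ with $\Psi\restriction g[A_n]=\Phi$. Set $f_{n+1}:=\Psi\circ g\colon A_{n+1}\to S_2$ and $B_{n+1}:=\Psi[C]$. Then $f_{n+1}$ is an embedding with finitely generated image; for $x\in A_n$ one computes $f_{n+1}(x)=\Psi(g(x))=\Phi(g(x))=f_n(x)$, so $f_{n+1}$ extends $f_n$ and $B_n=f_n[A_n]\subseteq B_{n+1}$, while $a\in A_{n+1}$. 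The back step is entirely symmetric: absorb the next $b\in S_2$ into $B_n$, use $\coFSIM(S_2)=\coFSIM(S_1)$ to find an abstract copy in $S_1$, and apply co-SEP inside $S_1$ to correct the restriction to $B_n$ to agree with $f_n^{-1}$.

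Taking unions as above produces the desired isomorphism $S_1\cong S_2$. The main obstacle is precisely the forth (and back) step: the equality of $\coFSIM$ alone only provides an abstract copy of $A_{n+1}$ somewhere in $S_2$, with no control over how it meets the already-matched part $B_n$, and the content of the argument is that co-SEP is exactly the amalgamation property needed to slide this copy into the correct position over $B_n$ while keeping it an embedding. The remaining points---finite generation of $A_{n+1}$, coherence of the chain, and that a bijective union of embeddings of $\CL_G(\CJ)$-structures is an isomorphism---are routine, the last using that all relations of $\CL_G(\CJ)$ have finite arity.
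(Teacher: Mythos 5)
Your proof is correct and follows essentially the same route as the paper's: a back-and-forth construction of isomorphisms between finitely generated subsystems, using $\coFSIM(S_1)=\coFSIM(S_2)$ to find an abstract copy of the extended subsystem and co-SEP to reposition that copy over the already-matched part. The only difference is cosmetic bookkeeping in the co-SEP application (you take $\Pi$ to be the inclusion $g[A_n]\hookrightarrow C$ and put the identification $f_n\circ(g\restriction A_n)^{-1}$ into $\Phi$, whereas the paper composes the identification into $\Pi$ and takes $\Phi$ to be the inclusion), which yields the same amalgamation.
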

\begin{proof}
We follow the proof scheme of back-and-forth argument in the proof of \cite[Theorem 2.2]{C1}. List $S_1=\{\alpha_0,\alpha_1,\ldots\}$ and $S_2=\{\beta_0,\beta_1,\ldots\}$.

Inductively, we will construct an increasing sequence of isomorphisms $f_i:S_{1,i}\rightarrow S_{2,i}$ between finitely generated subsystems $S_{1,i}$ and $S_{2,i}$ of $S_1$ and $S_2$ respectively, such that for $i\in \omega$, $\alpha_i\in S_{1,i}$ if $i$ is even, and $\beta_i\in S_{2,i}$ if $i$ is odd.  For each $l=1,2$, let $S_{l,-1}$ be the trivial subsystem of $S_l$ such that $m(k,J)(S_{l,-1})$ consists of the maximal element with respect to the partial order $\le$ on $S_l$ for each $(k,J)\in \BN\times \CJ^{<\BN}$. Let $f_{-1}:S_{1,-1}\rightarrow S_{2,-1}$ be the canonical isomorphism. Suppose that we have constructed $f_0,\ldots,f_i$. Without loss of generality, we may assume that $i$ is odd. Suppose $S_{1,i}$ is generated by a finite presystem $X_i$ of $S_1$.

If $\alpha_{i+1}\in S_{1,i}$, put $S_{1,i+1}:=S_{1,i}$ and put $f_{i+1}:=f_i$. If $\alpha_{i+1}\notin S_{1,i}$, let $S_{1,i+1}$ be the subsystem generated by the finite subset $X_i\cup \left([\alpha_{i+1}]\cap m(k,J)\right)$ where $\alpha_{i+1}\in m(k,J)$. Since $\coFSIM(S_1)=\coFSIM(S_2)$, there is a subsystem $\bar S_{2,i+1}$ of $S_2$ which is isomorphic to $S_{1,i+1}$. Let $\bar \psi :S_{1,i+1}\rightarrow \bar S_{2,i+1}$ be an isomorphism. Note that $\bar S_{2,i+1}$ is also finitely generated because $S_{1,i+1}$ is finitely generated. Since $S_2$ has co-SEP, there is an embedding $\psi :\bar S_{2,i+1}\rightarrow S_2$ to make the following diagram commute:
$$
\begin{tikzcd}
S_{2,i}\arrow[r, "f_{i,-1}"] \arrow[d, "\id"'] & S_{1,i+1} \arrow[r, "\bar\psi"] & \bar S_{2,i+1} \arrow[d, dashed, "\psi"]\\
S_{2,i} \arrow[rr, "\iota"'] & & S_2
\end{tikzcd}
$$
Put $S_{2,i+1}:=\psi\circ \bar\psi[S_{1,i+1}]$ and put $f_{i+1}=\psi\circ \bar\psi$. Note that $f_{i+1}$ extends $f_i$ because of the following diagram: 
$$
\begin{tikzcd}
S_{2,i}\arrow[r, "f_{i}^{-1}"] \arrow[d, "\id"'] & S_{1,i+1} \arrow[d, "f_{i+1}"]\\
S_{2,i} \arrow[r, "\iota"'] & S_{2,i+1}
\end{tikzcd}
$$
\end{proof}
\subsection{$\omega$-stability of sorted complete system with co-SEP}
From now on, we assume that $\CJ$ is {\bf countable}.

\begin{remark}\label{rem:firstorderproperty_SIM}
Let $S'$ be a finitely generated sorted complete system and let $X$ be a nice finite presystem generating $S'$. Then, by Remark \ref{rem:smallest subsystem}(4), $S'\in \coFSIM(S)$ if and only if $S\models \exists \bar x\theta_X(\bar x)$.
\end{remark}

\noindent As a corollary of Lemma \ref{lem:key_lemma_for_alpeh_0_categoricity_for_SIP}, we have the following result analogous to \cite[Theorem 2.3]{C1}. 

\begin{theorem}\label{thm:alpeh_0_categoricity_for_SIP}
Suppose that $S$ has co-SEP. For each finitely generated sorted complete system $S'$, fix a nice finite presystem $X_{S'}$ generating $S'$. Then, $Th(S)$ is axiomatized by 
\begin{align*}
&SCS_{SEP}\\
\cup&\{\exists \bar x \theta_{X_{S'}}(\bar x):S'\in \coFSIM(S) \}\\
\cup&\{\neg \exists \bar x\theta_{X_{S'}}(\bar x):S'\notin \coFSIM(S)\}.
\end{align*}
\end{theorem}

\begin{proof}
Let $S_1$ and $S_2$ be sorted complete systems. Suppose
\begin{align*}
S_1,S_2\models &SCS_{SEP}\\
\cup&\{\exists \bar x \theta_{X_{S'}}(\bar x):S'\in \coFSIM(S) \}\\
\cup&\{\neg \exists \bar x\theta_{X_{S'}}(\bar x):S'\notin \coFSIM(S)\}.
\end{align*}
We show that $S_1\equiv S_2$. By Remark \ref{rem:firstorderproperty_SIM}, $\coFSIM(S_1)=\coFSIM(S_2)$ and by L\"owenheim-Skolem, we may assume that $S_1$ and $S_2$ are countable. Then, by Lemma \ref{lem:key_lemma_for_alpeh_0_categoricity_for_SIP}, $S_1\equiv S_2$. 
\end{proof}

Next, we describe compete types of sorted complete systems with co-SEP, which is analogous to \cite[Theorem 2.4]{C1}. 
\begin{definition}
For $a\in m(k,J)$ and for a subsystem $S'$ of $S$, we write $a\vee S':=\{c\in m(kk',J_{\subseteq}^*)(k,J):[c]=[a]\vee [b], b\in \min S'\cap m(k',J')\}$. 
\end{definition}
\noindent Note that $a\vee S'$ is locally full, and $c_1\sim c_2$ for any $c_1,c_2\in a\vee S'$ so that $a\vee S'$ is relatively dense.

\begin{theorem}\label{thm:description_complete_types}
Let $S\models SCS_{SEP}$.  
\begin{enumerate}
	\item Let $A$ be a subsystem of $S$ and let $a,b\in m(k,J)$. Then, $\tp(a/A)=\tp(b/A)$ if and only if $a\vee A=b\vee A$ and there is an isomorphism $f:S_{(a\vee A)\cup\{a\}}\rightarrow S_{(b\vee A)\cup \{b\}}$ such that
	\begin{itemize}
		\item $f\restriction_{S_{a\vee A}}=\id_{S_{a\vee A}}$;
		\item $f(a)=b$.
	\end{itemize}
	\item $\Th(S)$ is $\omega$-stable.
\end{enumerate}
\end{theorem}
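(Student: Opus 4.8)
The plan is to adapt Chatzidakis' argument for the unsorted case in \cite[Theorem 2.4]{C1}, deriving part (2) from part (1) by a counting argument. I work inside a monster model $S^{*}\models SCS_{SEP}$, so that $\tp(a/A)=\tp(b/A)$ is equivalent to the existence of $\sigma\in\Aut(S^{*}/A)$ with $\sigma(a)=b$, and I use that by Remark \ref{rem:axiomatizability_co-SIP} and Proposition \ref{prop:FSIP=SIP} the model $S^{*}$ has co-SEP. For the forward direction of (1) I pick such a $\sigma$. Since $\sigma$ fixes $A$ pointwise it fixes $\min A$, and because $\vee$ is built from the relations $\le, C, P$ we get $\sigma[a\vee A]=(\sigma a)\vee A=b\vee A$; each member of $a\vee A$ is the unique common refinement of $a$ and the relevant $b'\in\min A$ and so lies in $\dcl(A\cup\{a\})$, which a direct check with cosets turns into the set equality $a\vee A=b\vee A$ with $\sigma$ fixing it pointwise. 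Restricting $\sigma$ then produces the required isomorphism $f\colon S_{a\vee A\cup\{a\}}\to S_{b\vee A\cup\{b\}}$ with $f\restriction_{S_{a\vee A}}=\id$ and $f(a)=b$.

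The backward direction is the core of (1). Assume $a\vee A=b\vee A=:W$ and an isomorphism $f\colon S_{W\cup\{a\}}\to S_{W\cup\{b\}}$ fixing $S_{W}$ and sending $a\mapsto b$. Because $W\subseteq\dcl(A\cup\{a\})$ and $a\in S_{W\cup\{a\}}$, one has $S_{A\cup\{a\}}=S_{A\cup S_{W\cup\{a\}}}$, so by Remark \ref{rem:co-fibre product_and_smallestsubsystem} it is the co-fibre product of $A$ and $S_{W\cup\{a\}}$ over a base $S_{0}\subseteq A\cap S_{W\cup\{a\}}$ determined by $S_{W\cup\{a\}}\vee A\sim\min S_{0}$, and the hypothesis $a\vee A=b\vee A$ is precisely what guarantees that $S_{W\cup\{b\}}$ amalgamates with $A$ over the \emph{same} base (so that $S_{1}'\vee A\sim\min S_{0}$ holds for $S_{1}'=S_{W\cup\{b\}}$ as well). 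Since $S_{0}\subseteq S_{W}$ and $f$ fixes $S_{W}$, the map $f$ fixes $S_{0}$; Lemma \ref{lem:fibre_prod_in_sortedcompletesystem}(2), applied with $S_{2}=A$, therefore extends $f\cup\id_{A}$ to an isomorphism $g\colon S_{A\cup\{a\}}\to S_{A\cup\{b\}}$ with $g\restriction_{A}=\id$ and $g(a)=b$. Finally, running the back-and-forth of Lemma \ref{lem:key_lemma_for_alpeh_0_categoricity_for_SIP} in $S^{*}$ with $g$ as its seed and $A$ held fixed (using co-SEP) extends $g$ to an automorphism of $S^{*}$ over $A$, so $\tp(a/A)=\tp(b/A)$.

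For (2) I reduce $\omega$-stability to counting $1$-types over a countable set of parameters, which, since $\CJ$ is countable the language $\CL_{G}(\CJ)$ is countable, may be enlarged to a countable subsystem $A$ (passing to $n$-types is routine, as tuples live in the product sorts $m(k,J)$). Fix a sort $m(k,J)$; by part (1) the type of any $a\in m(k,J)$ over $A$ is determined by the pair consisting of $W=a\vee A$ and the isomorphism type over $S_{W}$ of the one-generator sorted extension $S_{W\cup\{a\}}$ with $a$ named. For fixed $W$ the second coordinate ranges over only countably many values, being the atomic data of a single sorted generator (its finite $\sim$-class $[a]\cap m(k,J)$ of size $\le k$, its $\le,C,P$-relations to $W$, and the countably many sorts $J'$ carried by $N_{a}$). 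For the first coordinate, $a\vee A$ is governed by the image of $N_{a}$ in the profinite group $G(A)$ attached to the countable subsystem $A$, together with a coset and a sort; since $G(A)$ has countable weight it has only countably many open subgroups, and $[G(S^{*}):N_{a}]\le k$ is bounded, so only countably many values of $a\vee A$ occur. Summing, $|S_{1}(A)|\le\aleph_{0}$, whence $\Th(S)$ is $\omega$-stable.

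The main obstacle is the backward direction of (1): one must verify that the co-fibre product decomposition of $S_{A\cup\{a\}}$ has its amalgamation base $S_{0}$ contained in $S_{W}$ and that $a\vee A=b\vee A$ forces the $b$-side to share this base, so that Lemma \ref{lem:fibre_prod_in_sortedcompletesystem}(2) genuinely applies, and then that co-SEP upgrades the finitely-generated amalgam to a global automorphism. A secondary, purely sorted, subtlety is that $a\notin S_{a\vee A}$ only because the sort $J$ need not survive when one coarsens the members of $a\vee A$ down to $N_{a}$ inside the generated subsystem; tracking which sorts persist under coarsening is exactly what makes the extra generator $a$ in $S_{a\vee A\cup\{a\}}$ carry nontrivial information, and it underlies both the statement of (1) and the uniform counting (via the index bound $k$ and the countable weight of $G(A)$) needed for (2).
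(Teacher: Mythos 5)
Your overall strategy matches the paper's (both adapt \cite[Theorem 2.4]{C1}): the forward direction of (1) via an automorphism of a saturated model is exactly the paper's argument, and your counting in (2) is essentially the paper's count of the possible values of $a\vee A$ together with the finitely many isomorphism classes of $[a]$ over the presystem $a\vee A\cup([a]\cap m(k,J))$. The gap is in the backward direction of (1), at both of the points you yourself flag as ``the main obstacle,'' and neither is resolved. First, you apply Lemma \ref{lem:fibre_prod_in_sortedcompletesystem}(2) with $S_2=A$, which requires a base $S_0\subset S_{W\cup\{a\}}\cap A$ with $\min S_0\sim S_{W\cup\{a\}}\vee A\sim W$; but $W=a\vee A$ need not be contained in $A$ (its elements are joins with $a\notin A$, and only $\sim$-representatives of them, in sorts supplied by the sorting-data axioms, can be located in $A$), so the existence of such a common base needs an argument. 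The paper sidesteps this entirely by enlarging the $A$-side: it fixes a \emph{finite} locally full, relatively dense $X\subset A$ containing some $x_0\in\min A$ and a finite locally full $Y\subset a\vee A$, checks $S_{X\cup Y}\vee S_{Y\cup\{a\}}\sim S_Y$, and amalgamates $S_{Y\cup\{a\}}$ with $S_{X\cup Y}$ over the base $S_Y$, which manifestly lies in both sides.

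Second, and more seriously, your concluding step --- ``running the back-and-forth of Lemma \ref{lem:key_lemma_for_alpeh_0_categoricity_for_SIP} in $S^{*}$ with $g$ as its seed and $A$ held fixed'' --- is not licensed by anything proved so far. Co-SEP (Definition \ref{def:co-(F)SIP}) provides the extension property only for embeddings of \emph{finitely generated} subsystems, and the back-and-forth of Lemma \ref{lem:key_lemma_for_alpeh_0_categoricity_for_SIP} is between countable systems whose partial isomorphisms have finitely generated domains at every stage. Your seed $g:S_{A\cup\{a\}}\rightarrow S_{A\cup\{b\}}$ has infinitely generated (possibly uncountable) domain, so the inductive extension step does not apply; and asserting that a subsystem isomorphism over an arbitrary $A$ can be upgraded to an automorphism of the monster is essentially the statement being proved, so assuming it is circular. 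The paper's route avoids this: having produced the finitely generated isomorphism $g:S_{X\cup Y\cup\{a\}}\rightarrow S_{X\cup Y\cup\{b\}}$ over $S_{X\cup Y}$, it passes to a countable elementary substructure $S'$ containing $X,Y,a,b$, runs the back-and-forth of Lemma \ref{lem:key_lemma_for_alpeh_0_categoricity_for_SIP} inside $S'$ with this finitely generated seed to get $\tp(a/S_X)=\tp(b/S_X)$, and then concludes $\tp(a/A)=\tp(b/A)$ by compactness (finite character of types), since every finite subset of $A$ can be absorbed into such an $X$. To repair your proof you would have to reproduce exactly this localization, so the global amalgamation over all of $A$ buys nothing.
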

\begin{proof}
We follow the proof scheme of \cite[Theorem 2.4]{C1}.

$(1)$ Suppose $\tp(a/A)=\tp(b/A)$. Without loss of generality, we may assume that $S$ is saturated. Then, there is an automorphism $f$ of $S$ over $A$ sending $a$ to $b$. By the choice of $f$, we have that $a\vee A=b\vee A$. Also, the restriction of $f$ on $S_{(a\vee A)\cup \{a\}}$ is the desired one.

Conversely, suppose $a\vee A=b\vee A$ and there is an isomorphism $f:S_{(a\vee A)\cup \{a\}}\rightarrow S_{(b\vee A)\cup \{b\}}$ over $a\vee A$ sending $a$ to $b$. Take a locally full, relatively dense, and finite subset $X$ of $A$ so that $X$ contains an element $x_0\in \min A$. Take a locally full finite subset $Y$ of $a\vee A$. By the choice of $x_0$ and $Y$, we have that 
\begin{itemize}
	\item $S_{X\cup Y}\vee S_{Y\cup \{a\}}\sim \{x_0\}\vee \{a\}\sim S_{a\vee A}\sim S_Y$;
	\item $S_{X\cup Y}\vee S_{Y\cup \{b\}}\sim \{x_0\}\vee \{b\}\sim S_{b\vee A}\sim S_Y$.
\end{itemize}
And the restriction $f\restriction_{S_{Y\cup \{a\}}}:S_{Y\cup \{a\}}\rightarrow S_{Y\cup\{b\}}$ is an isomorphism over $S_Y$ with $f([a]\cap m(k,J))=[b]\cap m(k,J)$ so that we have the following diagram 
$$
\begin{tikzcd}
S_Y \arrow[rr, "\iota_{S_Y,S_{Y\cup \{a\}}}"] \arrow[d, "\id"'] && S_{Y\cup \{a\}} \arrow[d, "f\restriction_{S_{Y\cup \{a\}} }"]\\
S_Y \arrow[rr, "\iota_{S_Y,S_{Y\cup \{b\}}}"'] && S_{Y\cup \{b\}}
\end{tikzcd}
$$
By Lemma \ref{lem:fibre_prod_in_sortedcompletesystem}, there is an isomorphism $g:S_{X\cup Y\cup \{a\}}\rightarrow S_{X\cup Y\cup \{b\}}$ over $S_{X\cup Y}$. Take a countable elementary substructure $S'$ of $S$ containing $X,Y,a,b$ so that $S_{X\cup Y\cup\{a\}}$ and $S_{X\cup Y\cup\{b\}}$ are subsystems of $S'$. By the back-and-forth argument with co-SEP, we extend the isomorphism $g$ into an automorphism of $S'$ and we have that $\tp_{S'}(a/S_X)=\tp_{S'}(b/S_X)$ where $\tp_{S'}(a/S_X)$ is the complete type of $a$ over $S_X$ in $S'$. Since $S'$ is an elementary substructure of $S$, we have that $\tp(a/S_X)=\tp(b/S_X)$, and so by compactness, $\tp(a/A)=\tp(b/A)$.\\

$(2)$ As noted in the proof of \cite[Theorem 2.4]{C1}, it is enough to show that for a countable subset $A$ of $S$, there are only countably many unary types over $A$.

Let $A$ be a countable subset of $S$. Without loss of generality, we may assume that $A$ is a subsystem of $S$ and $S$ is $\aleph_1$-saturated. By $(1)$, for each $a\in m(k,J)$, $\tp(a/A)$ is determined by the isomorphism classes of $S_{a\vee A\cup \{a\}}$ over $S_{a\vee A}$. Since $A$ is countable and $\CJ$ is countable, there are only countably many possibilities of $a\vee A$ for each $a\in m(k,J)$. Also, $a\vee A\cup ([a]\cap m(k,J))$ is locally full and dense in $S_{a\vee A\cup \{a\}}$ because $a\le s$ for all $s\in S_{a\vee A\cup \{a\}}$. Thus, the isomorphism from $S_{a\vee A\cup\{a\}}$ is determined by the image of $a\vee A\cup ([a]\cap m(k,J))$.  Thus, for each $a\in m(k,J)$, the number of the isomorphism classes of $S_{a\vee A\cup \{a\}}$ over $a\vee A$ is bounded by the number of isomorphism classes of $[a]$, which is finite. Therefore, there are only countably many types $\tp(a/A)$ for each $a\in m(k,J)$.
\end{proof}

\subsection{Description of forking} In this subsection, we aim to describe forking independence and $U$-rank in sorted complete systems having co-SEP. We basically follow the proof scheme in \cite[Section 4]{C1}. We fix a complete extension $T$ of $SCS_{SEP}$ and assume that $S\models T$.

\begin{definition}\cite[Definition 1.10]{C1}
Let $a\le b\in S$. The length of $a$ over $b$, denoted by $L(a/b)$ is the largest integer $n$ such that there exists a chain $$a=a_0<a_1<\cdots<a_n=b.$$
\end{definition}

\begin{lemma}\label{lem:join_algebraic_closure}\cite[Lemma 4.1]{C1}
Let $a<b\in S$ and let $A\subset S$. Suppose $b\in a\vee A$. Then, $$a\in \acl(A)\Leftrightarrow a\in \acl(b).$$
\end{lemma}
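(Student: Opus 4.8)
The plan is to read both $\tp(a/A)$ and $\tp(a/b)$ off the description of complete types in Theorem~\ref{thm:description_complete_types}(1) and compare their realizations. First I would replace the parameter set by the subsystem it generates: writing $S_A$ for the subsystem generated by $A$, we have $\acl(A)=\acl(S_A)$ and $a\vee A=a\vee S_A$, so I may assume $A$ is a subsystem. Let $N_a,N_b$ and $N_A$ be the open normal subgroups of $G(S)$ attached to $a$, $b$ and to $\min S_A$. Recalling that $[x]\vee[y]$ is the class of $N_xN_y$, the hypotheses $a<b$ and $b\in a\vee A$ say precisely that $N_a\subsetneq N_b=N_aN_A$; moreover, since all elements of $a\vee A$ lie in the single $\sim$-class $[b]$, the subsystem they generate is exactly $S_b=S_{\{b\}}$. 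As $N_a\subseteq N_b$ gives $a\vee S_b=[N_aN_b]=[b]$ as well, I get $S_{a\vee A}=S_{a\vee S_b}=S_b$ and $S_{a\vee A\cup\{a\}}=S_{a\vee S_b\cup\{a\}}=S_{\{a,b\}}$. Hence by Theorem~\ref{thm:description_complete_types}(1), for $a'$ in the sort of $a$ one has: $a'$ realizes $\tp(a/A)$ iff $a'\vee A=a\vee A$ (equivalently $N_{a'}N_A=N_b$) together with an isomorphism $S_{\{a',b\}}\to S_{\{a,b\}}$ over $S_b$ sending $a'\mapsto a$; while $a'$ realizes $\tp(a/b)$ iff $a'\vee S_b=a\vee S_b$ (equivalently $N_{a'}\subseteq N_b$) together with \emph{the same} isomorphism clause over $S_b$. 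Thus the two types are governed by identical isomorphism data over the common base $S_b$, and differ only in the side condition $N_{a'}N_A=N_b$ versus $N_{a'}\subseteq N_b$.

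From this description the implication $a\in\acl(b)\Rightarrow a\in\acl(A)$ is immediate: since $N_{a'}N_A=N_b$ forces $N_{a'}\subseteq N_b$, every realization of $\tp(a/A)$ already realizes $\tp(a/b)$, so the realization set of $\tp(a/A)$ is a subset of that of $\tp(a/b)$, and finiteness of the latter transfers to the former.

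For the converse $a\in\acl(A)\Rightarrow a\in\acl(b)$ I would argue contrapositively: assuming $\tp(a/b)$ has infinitely many realizations I must produce infinitely many $a'$ that in addition satisfy $N_{a'}N_A=N_b$. Here I expect the embedding property to be the decisive tool. Dualizing through the equivalence between $\SPG_{\CJ}$ and sorted complete systems, a realization of the common isomorphism clause over $S_b$ corresponds to a lift of the epimorphism $G(S)\to G/N_b$ to an epimorphism onto a copy of $G/N_a$; the extra requirement $N_{a'}N_A=N_b$ is exactly the demand that this lift be compatible, through the fibre product $G/N_a\times_{G/N_b}G/N_A$, with the fixed map $G(S)\to G/N_A$. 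Using that $G(S)$ has SEP (Remark~\ref{rem:(F)SIP_co-(F)SIP}) and the cartesian/pushout characterization of fibre products in Remark~\ref{rem:char_fired_prod}, I would show that each of the infinitely many lifts onto $G/N_a$ can be realized compatibly with $G(S)\to G/N_A$, so that the subfamily satisfying $N_{a'}N_A=N_b$ is again infinite; equivalently, the ``excess'' realizations of $\tp(a/b)$ with $N_{a'}N_A\subsetneq N_b$ do not affect the finiteness dichotomy.

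The main obstacle is exactly this last step. The two realization families share their isomorphism data over $S_b$ but are carved out by genuinely different side conditions, so they are \emph{not} equal as sets (in a free $G(S)$ there are many admissible $N_{a'}\subseteq N_b$, most but not all with $N_{a'}N_A=N_b$); the content of the lemma is that their finiteness nonetheless coincides, and it is precisely the embedding property that forces the admissible levels to be spread out enough for the generic condition $N_{a'}N_A=N_b$ to persist infinitely often. Recasting ``$a'$ realizes $\tp(a/A)$'' as a fibre-product lifting problem over $S_b$ and matching its solution count with that of the problem defining $\tp(a/b)$ is where I would spend the effort; the additional sorted bookkeeping—checking that the sort of $b$ is available when passing between $S_A$ and $S_b$, which can fail in the sorted setting—is the point that most requires care and where the argument departs from the unsorted \cite[Lemma 4.1]{C1}.
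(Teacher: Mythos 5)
Your setup and the easy direction are sound: after replacing $A$ by the subsystem it generates you correctly get $S_{a\vee A}=S_b$, and Theorem \ref{thm:description_complete_types}(1) does show that the realizations of $\tp(a/A)$ are exactly the realizations of $\tp(a/b)$ satisfying the extra condition $b\in a'\vee A$, which gives $a\in\acl(b)\Rightarrow a\in\acl(A)$ --- the same direction the paper treats as immediate. But your hard direction is a plan with an acknowledged hole, and the plan leans on the wrong tool. The embedding property has already been fully consumed in proving Theorem \ref{thm:description_complete_types}(1); the paper's proof of this lemma never invokes SEP or fibre products again, and SEP cannot deliver what you ask of it: a lifting argument through the fibre product $G/N_a\times_{G/N_b}G/N_A$ produces \emph{one} compatible epimorphism at a time, whereas you need the family of realizations $a'$ of $\tp(a/b)$ with $N_{a'}N_A=N_b$ to be \emph{infinite}. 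Since $N_A$ is open of finite index, the map $a'\mapsto N_{a'}N_A$ takes only finitely many values between $N_A$ and $N_b$, so pigeonhole gives infinitely many realizations sharing a common value $N'$ --- but nothing in your sketch rules out $N'\subsetneq N_b$, which is precisely the failure mode to be excluded.

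The missing idea is the paper's induction on the length $L(a/b)$, which makes the side condition automatic instead of something to be engineered. In the base case $L(a/b)=1$: for any realization $a'$ of $\tp(a/\acl(b))$ one has $N_{a'}\subseteq N_{a'}N_A\subseteq N_b$ (using $N_A\subseteq N_b$, which follows from $N_b=N_aN_A$), and since $L(a'/b)=1$ is part of the type, either $N_{a'}N_A=N_b$, i.e.\ $b\in a'\vee A$, or $N_A\subseteq N_{a'}$; the latter can occur for only finitely many $a'$ in the sort of $a$, because only finitely many subgroups contain the open group $N_A$ and each contributes boundedly many elements. So if $a\notin\acl(b)$, saturation yields infinitely many realizations of $\tp(a/\acl(b))$, almost all of which satisfy $b\in a'\vee A$ and hence --- the isomorphism clause over $S_{a\vee A}\subseteq\acl(b)$ being already part of the type over $\acl(b)$ --- realize $\tp(a/A)$ by Theorem \ref{thm:description_complete_types}(1), contradicting $a\in\acl(A)$. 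For $L(a/b)=n\ge 2$ the paper chains: choosing $a=a_0<a_1<\cdots<a_n=b$, each $a_i$ lies in $\acl(a)\subseteq\acl(A)$, the modular law gives $a_{i+1}\in a_i\vee S_{A\cup\{a_{i+1}\}}$ with $L(a_i/a_{i+1})=1$, and the base case applied over the enlarged base $S_{A\cup\{a_{i+1}\}}$ yields $a_i\in\acl(a_{i+1})$, whence $a\in\acl(b)$. Your closing worry about sorted bookkeeping is not where the difficulty sits: the sorted aspects are absorbed by Theorem \ref{thm:description_complete_types}(1), and the remainder is the same chain-and-counting argument as in the unsorted \cite[Lemma 4.1]{C1}.
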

\begin{proof}
If $b\in \min A$, then it holds trivially. So, we assume that $\{b\}>\min A$. It is enough to show the left-to-right implication holds. Suppose $a\in \acl(A)$. We use induction on $L(a/b)$. Suppose $L(a/b)=1$. Without loss of generality, we may assume that $S$ is $(\aleph_0+|A|)^+$-saturated.

Suppose $a\in \acl(A)\setminus\acl(b)$. Then, $\tp(a/\acl(b))$ has $(\aleph_0+|A|)^+$-many realizations in $S$ and so there are infinitely many realizations $a_0,a_1,\ldots$ of $\tp(a/\acl(b))$ outside of $A$. Since $a_i\models \tp(a/\acl(b))$, we have that $L(a_i/b)=1$. Note that $\{a_i\},\min A<\{b\}$ and $L(a_i/b)=1$ for each $i=0,1,\ldots$. So, for each $i=0,1,\ldots$, $a_i\vee A\le \{b\}$. If $a_i\vee A<\{b\}$, then $b\notin a\vee A$, which contradicts the assumption that $b\in a\vee A$. So, $b\in a_i\vee A$ and $a\vee A=a_i\vee A=[b]$ for all $i$. Because $b\le x$ for all $x\in S_{a\vee A}$, $S_{a\vee A}\subseteq \acl(b)$. Since $a_i\models \tp(a/\acl(b))$, $a_i\models \tp(a/S_{a\vee A})=\tp(a/S_{a_i\vee A})$. By Theorem \ref{thm:description_complete_types}(1), there is an $\CL_{SCS}(\CJ)$-isomorphism from $S_{(a_i\vee A)\cup\{a_i\}}$ to $S_{(a\vee A)\cup\{a\}}$ sending $a_i$ to $a$ and fixing $S_{a\vee A}=S_{a_i\vee A}$ pointwise. In summary, we have that
\begin{itemize}
	\item $a_i\vee A=a\vee A$; and
	\item there is an $\CL_{SCS}(\CJ)$-isomorphism $S_{a_i\vee A\cup\{a_i\}}$ to $S_{a\vee A\cup\{a\}}$ sending $a_i$ to $a$ and fixing $S_{a\vee A}=S_{a_i\vee A}$ pointwise.
\end{itemize}
Therefore, by Theorem \ref{thm:description_complete_types}(1), $a_i\models \tp(a/A)$. Since $a_i$'s are distinct, $a\not\in \acl(A)$, a contradiction. 

Suppose $L(a/b)=n\ge 2$. Take $a=a_0<a_1<\ldots<a_n=b$. Note that each $a_i$ is in $\acl(A)$ because $a\in \acl(A)$. Since $\{b\}>\min A$, we have that $a_{i+1}\in a_i\vee S_{A\cup\{a_{i+1}\}}$ for each $i$. Since $L(a_i/a_{i+1})=1$, by induction, $a_i\in \acl(a_{i+1})$ for each $i$. Thus, $a=a_0\in \acl(a_n)=\acl(b)$.
\end{proof}

We describe forking independence, analogous to \cite[Proposition 4.1]{C1}.
\begin{proposition}\label{prop:forking_description}
Let $A\subseteq B$ be substructures of $S$ and let $a\in S$. Then,
$$a\indo_A B\Leftrightarrow a\vee B\subseteq \acl(A)\Leftrightarrow a\vee B\subseteq \acl(a\vee A).$$
\end{proposition}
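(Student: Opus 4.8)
The plan is to prove the second equivalence (the two algebraic-closure conditions) by a direct closure computation resting on Lemma \ref{lem:join_algebraic_closure}, and then to prove the first equivalence (the forking clause) by identifying the canonical base of $\tp(a/B)$ with the join $a\vee B$, using the explicit description of types in Theorem \ref{thm:description_complete_types}(1) together with the $\omega$-stability of $T$ from Theorem \ref{thm:description_complete_types}(2).

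For the equivalence $a\vee B\subset\acl(A)\Leftrightarrow a\vee B\subset\acl(a\vee A)$: since $A\subset B$, the minimal level $\min B$ refines $\min A$, so each $c\in a\vee B$ lies above $a$ and refines the corresponding element $c_A\in a\vee A$ obtained by coarsening $c$ to the $A$-level; thus $a\le c\le c_A$, and a direct computation with the underlying open subgroups shows $c_A\in c\vee A$. Because $c_A\in a\vee A\subset\acl(a\vee A)$ and all elements of $a\vee A$ are $\sim$-equivalent (so $\acl(a\vee A)=\acl(c_A)$), applying Lemma \ref{lem:join_algebraic_closure} to the pair $c<c_A$ yields $c\in\acl(A)\Leftrightarrow c\in\acl(c_A)\Leftrightarrow c\in\acl(a\vee A)$. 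Ranging over all $c\in a\vee B$ gives the equivalence. I expect this step to be essentially routine once the subgroup bookkeeping ($c_A\in c\vee A$) is set up.

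For the forking clause I would pass to $S^{\eq}$ and use that, in the stable theory $T$, one has $a\indo_A B$ iff $\Cb(\tp(a/B))\subset\acl^{\eq}(A)$. The heart is to show that $\Cb(\tp(a/B))$ is interalgebraic with a single imaginary $e$ coding the $\sim$-class $a\vee B$. By Theorem \ref{thm:description_complete_types}(1), $\tp(a/B)$ is determined by $a\vee B$ together with the isomorphism type of $S_{a\vee B\cup\{a\}}$ over $S_{a\vee B}$; since the latter is finite data algebraic over $a\vee B$, the type does not split over $a\vee B$, whence $\Cb(\tp(a/B))\subset\acl^{\eq}(e)$. Conversely, distinct $A$-conjugates of $\tp(a/B)$ carry distinct joins, so $e$ is fixed up to finitely many choices by any automorphism fixing the canonical base, giving $e\in\acl^{\eq}(\Cb(\tp(a/B)))$. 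With interalgebraicity in hand, $a\indo_A B\Leftrightarrow e\in\acl^{\eq}(A)\Leftrightarrow a\vee B\subset\acl(A)$, completing the chain.

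I expect the main obstacle to be the reverse interalgebraicity $e\in\acl^{\eq}(\Cb(\tp(a/B)))$, that is, checking that the join is genuinely controlled by the canonical base rather than by extraneous data in $B$. The cleanest way to handle it is to combine the amalgamation of sorted complete systems (the co-fibre product of Lemma \ref{lem:fibre_prod_in_sortedcompletesystem}) with stationarity over algebraically closed sets: when $a\vee B\subset\acl(A)$ one realizes $\tp(a/B)$ as the co-fibre product of $S_{a\vee A\cup\{a\}}$ and $S_{a\vee B}$ over $S_{a\vee A}$, exhibiting it as the nonforking extension of $\tp(a/A)$; and when $a\vee B\not\subset\acl(A)$ one takes a Morley sequence in $\tp(b/\acl(A))$ for a witnessing $b\in\min B$ and verifies that a single $a$ cannot refine the join along the entire sequence, producing a dividing witness. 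This last step—turning a non-algebraic join element into an explicit inconsistency—is where the argument is most delicate.
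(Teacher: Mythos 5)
Your plan is essentially correct but reaches the proposition by a genuinely different route from the paper, and the comparison is instructive. For the direction ``independence implies the join is algebraic over $A$'' the paper has a one-line argument: $a\vee B\subset\acl(a)\cap\acl(B)$, so $a\indo_A B$ forces $a\vee B\subset\acl(A)$ by basic forking calculus; your Morley-sequence dividing argument proves the same thing but is heavier, and to close it you must invoke the key finiteness fact that there are only boundedly many $\sim$-classes above any element of $m(k,J)$ (they correspond to subgroups of a group of order at most $k$), and run the sequence through the pair $(b,c)$ with $c\in a\vee B\setminus\acl(A)$, noting that an infinite indiscernible sequence of such $c_i$ lies in pairwise distinct $\sim$-classes since each class meets $m(k,J)$ in at most $k$ points; this is what makes $\{x\vee b_i\sim c_i\}$ $n$-inconsistent, and without it the step ``a single $a$ cannot refine the join along the entire sequence'' has no teeth. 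For the converse the paper writes down the explicit partial type $\Sigma(x)=\tp(a/S_{a\vee B})\cup\{x\vee\delta\sim c:\delta\in B,\,c\in a\vee B,\,\delta\le c\}$, uses Theorem \ref{thm:description_complete_types}(1) to see $\Sigma\models\tp(a/B)$, and concludes $\tp(a/B)$ is definable over $A$, hence nonforking; your alternative --- take any nonforking extension $q$ of $\tp(a/A)$ to $B$, apply the already-proved easy direction to a realization of $q$ to see it has the same join data, and then use the uniqueness clause of Theorem \ref{thm:description_complete_types}(1) to get $q=\tp(a/B)$ --- is correct and arguably cleaner, avoiding the definability check. Do record, as the paper does, that under $a\vee B\subset\acl(A)=A$ one has $a\vee B=a\vee A$ and $S_{a\vee B}\subset A$; this also shows your proposed co-fibre product of $S_{a\vee A\cup\{a\}}$ and $S_{a\vee B}$ over $S_{a\vee A}$ is degenerate as written (the two bottom factors coincide) --- what you want is to amalgamate $S_{a\vee A\cup\{a\}}$ with a piece of $B$ over $S_{a\vee A}$ via Lemma \ref{lem:fibre_prod_in_sortedcompletesystem}. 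Your treatment of the second equivalence (coarsening $c\in a\vee B$ to $c_A\in a\vee A$, checking $c_A\in c\vee A$, and applying Lemma \ref{lem:join_algebraic_closure}) is exactly what the paper's bare citation of that lemma intends.

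One local assertion in your canonical-base overlay is false as stated: for a type over an arbitrary set $B$ (not a model), nonsplitting over a subset does not imply the canonical base lies in its $\acl^{\eq}$. For instance, in the theory of an equivalence relation with infinitely many infinite classes, $\tp(a/b)$ containing $E(x,b)$ cannot split over $\emptyset$ (there is only one parameter available), yet it forks over $\emptyset$. So the containment $\Cb(\tp(a/B))\subset\acl^{\eq}(e)$ requires producing a global invariant or definable extension --- which is essentially the paper's $\Sigma$, or your own nonforking-extension argument. Since your final paragraph supplies exactly that mechanism and no longer leans on the nonsplitting claim, the proof goes through, but the canonical-base discussion should be demoted to motivation or rewritten in terms of the global extension.
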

\begin{proof}
Without loss of generality, we may assume that $A$ and $B$ are algebraically closed. Since $\min A\ge \min B$, we have that $a\vee A\ge a\vee B$. Since $\{a\}, \min B\le a\vee B$, we have that $a\vee B\subseteq \acl(a)\cap \acl(B)$. So, if $a\vee B\not\subseteq \acl(A)$, then $B\depo_A a$. Namely, if $B\indo_A a$, then $\acl(B)\indo_{\acl(A)}\acl(a)$ and so $a\vee B\indo_{\acl(A)}a\vee B$ by monotonicity of non-forking and the fact that $a\vee B\subseteq \acl(B)\cap \acl(a)$, which implies $a\vee B\subseteq \acl(A)$, a contradiction. Suppose $a\vee B\subseteq \acl(A)=A$ so that $a\vee B=a\vee A$ and $S_{a\vee B}\subseteq A$. Consider the following partial type over $B$, $$\Sigma(x):=\tp(a/S_{a\vee B})\cup\{x\vee \delta\sim c:\delta\in B,c\in a\vee B, \delta\le c\}.$$ By Theorem \ref{thm:description_complete_types}(1), the partial type $\Sigma$ is consistent and $\Sigma\models \tp(a/B)$. Since $a\vee B=a\vee A\subseteq A$, the type $\tp(a/B)$ is definable over $A$ and it does not fork over $A$. The second equivalence comes from Lemma \ref{lem:join_algebraic_closure}.
\end{proof}
\noindent By the same proof of \cite[Theorem 4.2]{C1}, we have the following description of $U$-rank.
\begin{theorem}\label{thm:U-rank}
Let $a\in S$ and $A\subset S$. Let $n=L(a/b)$ for some (equivalently, any) $b\in a\vee S_A$. Choose a sequence $a=a_0<a_1<\ldots<a_n=b$. Then, the $U$-rank of $\tp(a/A)$ is the number of indices $i<n$ such that $a_i\not\in \acl(a_{i+1})$.
\end{theorem}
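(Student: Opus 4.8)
The plan is to prove the identity by induction on $n=L(a/b)$, using the forking description of Proposition \ref{prop:forking_description} to convert the recursive definition of $U$-rank into a statement about how the join $a\vee B$ travels along the chain. Throughout I would assume, exactly as in the proof of Theorem \ref{thm:description_complete_types}, that $S$ is sufficiently saturated and that $A$, and each forking base $B\supseteq A$, is an algebraically closed subsystem, so that $a\vee A$ and $a\vee B$ are single $\sim$-classes; write $b_A$ for the top of the chosen chain (a representative of $a\vee A$) and $b_B:=a\vee\min B$. The one preparatory observation is order-theoretic: if $a_i\le a_{i+1}$ then the coarser $a_{i+1}$ is determined by $a_i$, so $a_{i+1}\in\dcl(a_i)$ and $\acl(a_{i+1})\subseteq\acl(a_i)$; hence a step is \emph{inessential} precisely when additionally $a_i\in\acl(a_{i+1})$, i.e. $\acl(a_i)=\acl(a_{i+1})$. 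By transitivity of $\acl$ it follows that if every step of a sub-chain is inessential, then its bottom is algebraic over its top. This is the engine behind both inequalities. Denote by $d(a/A)$ the number of essential steps in the chosen chain.

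For the degenerate case $d(a/A)=0$ every step is inessential, so chaining gives $a\in\acl(b)$, and Lemma \ref{lem:join_algebraic_closure} upgrades this to $a\in\acl(A)$ (when $a=b$ this is immediate, since $a\ge\min A$ forces $a\in\dcl(A)$); thus $\tp(a/A)$ is algebraic and $U=0$, as required. For the inductive step I would treat the two inequalities separately. For the upper bound, let $B\supseteq A$ be algebraically closed with $\tp(a/B)$ forking over $A$; by Proposition \ref{prop:forking_description} this says $a\vee B\sim b_B\notin\acl(b_A)$, while monotonicity of the join gives $a\le b_B\le b_A$ with $b_B\neq b_A$. Picking a maximal chain from $a$ to $b_A$ through $b_B$, the segment from $b_B$ to $b_A$ must carry at least one essential step (otherwise $b_B\in\acl(b_A)$ by the engine), so the part from $a$ to $b_B$ has at most $d(a/A)-1$ essential steps. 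Since $b_B\in a\vee S_B$ and $L(a/b_B)<n$, the induction hypothesis yields $U(\tp(a/B))=d(a/B)\le d(a/A)-1$, whence $U(\tp(a/A))\le d(a/A)$.

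For the lower bound, take the largest index $i$ with $a_i\notin\acl(a_{i+1})$. By maximality the tail from $a_{i+1}$ to $b_A$ is inessential, so $\acl(b_A)=\acl(a_{i+1})$ and therefore $a_i\notin\acl(b_A)$. I would then adjoin to $A$ the element $c$ with $N_c=N_{a_i}\cap N_A$, which lies in $\dcl(a_i,A)$ and satisfies $a\vee c\sim a_i$ by the modular law $N_a(N_{a_i}\cap N_A)=N_{a_i}\cap(N_aN_A)=N_{a_i}$ (using $N_a\subseteq N_{a_i}\subseteq N_{b_A}=N_aN_A$). Setting $B:=S_{A\cup\{c\}}\supseteq A$ gives $a\vee B\sim a_i\notin\acl(a_{i+1})=\acl(a\vee A)$, so $\tp(a/B)$ forks over $A$ by Proposition \ref{prop:forking_description}; the initial segment $a=a_0<\cdots<a_i$ realizes $d(a/B)=d(a/A)-1$, and the induction hypothesis gives $U(\tp(a/A))\ge U(\tp(a/B))+1=d(a/A)$. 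Combining the two bounds closes the induction.

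The main obstacle is the well-definedness implicit in the ``(equivalently, any)'' of the statement: that the essential-step count does not depend on the chosen maximal chain from $a$ to $b$, nor on the representative $b\in a\vee S_A$. This is exactly where the argument needs a Jordan--H\"older/gradedness property of maximal chains in a sorted complete system, yielding additivity $d(a/c)=d(a/b)+d(b/c)$ for $a\le b\le c$, together with the homogeneity furnished by co-SEP via Theorem \ref{thm:description_complete_types}(1) and Lemma \ref{lem:join_algebraic_closure}; without additivity the matching of ``the'' chain used for $B$ with ``the'' chain used for $A$ in the two bounds above collapses. As signalled after the statement, this is the content carried over verbatim from \cite[Theorem 4.2]{C1}, so beyond checking that Proposition \ref{prop:forking_description}, Lemma \ref{lem:join_algebraic_closure}, and the lattice structure of $\SPG_{\CJ}$ behave as in the unsorted setting, no genuinely new idea is required.
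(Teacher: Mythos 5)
Your overall route---induction on $n=L(a/b)$ driven by Proposition \ref{prop:forking_description} and Lemma \ref{lem:join_algebraic_closure}---coincides with the paper's, which offers no argument of its own and simply invokes \cite[Theorem 4.2]{C1}; your base case and lower bound are essentially sound. One small correction first: your claim $a_{i+1}\in\dcl(a_i)$ is false in general. For $S(\BZ/2\times\BZ/2)$ with full sorting data, the automorphism swapping the two factors fixes the identity coset $a$ of the trivial subgroup but moves the identity coset of one factor, which lies above $a$. What is true, and all your ``engine'' needs, is $a_{i+1}\in\acl(a_i)$: the elements above $a_i$ of a fixed sort form a finite set, since subgroups containing $N_{a_i}$ correspond to subgroups of the finite group $[a_i]\cap m(k,J)$.

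The one substantive gap is in your upper bound, and it is exactly the issue you flag at the end: the step ``pick a maximal chain from $a$ to $b_A$ through $b_B$'' is circular as written. A longest chain through a prescribed intermediate point need not have length $n$, because subgroup lattices are not graded (in $A_4$ the maximal chains $1<C_3<A_4$ and $1<C_2<V_4<A_4$ have lengths $2$ and $3$), so comparing the essential count of that chain with $d(a/A)$, which is defined via a longest chain, presupposes the chain-independence/additivity you then defer to \cite{C1}. Deferring mirrors what the paper does wholesale, but note the deferral is needed only on one side. The lower bound is unconditionally fine: any initial segment of a longest chain is itself a longest chain between its endpoints (otherwise splicing in a longer segment would contradict the maximality of $n$), so the induction hypothesis, in its ``$U\ge$ count'' direction---whose induction is self-contained---applies to $a_0<\dots<a_i$, and your construction of $c$ via Dedekind's modular law (concretely, $c=a_i\cap x_0$ for $x_0\in\min A$, which exists since the relevant cosets meet and whose sort exists by axiom (5) of Definition \ref{def:sorting_data}) gives $U(\tp(a/A))\ge d(a/A)$. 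The genuinely borrowed content is thus confined to the inequality $U(\tp(a/B))\le d(a/A)-1$ for an arbitrary forking extension $B$, i.e.\ to showing that longest chains maximize the essential count; that bookkeeping must actually be extracted from \cite[Theorem 4.2]{C1} rather than merely cited, though once Proposition \ref{prop:forking_description} and Lemma \ref{lem:join_algebraic_closure} are in place the sorted setting adds nothing new, since by axiom (4) of Definition \ref{def:sorting_data} every subgroup between $N_a$ and $N_b$ is represented in $S$.
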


We end our paper with the following question.
\begin{question}\label{question:sorted_prof_gp=galois_gp?}
We know by \cite[Theorem 2]{W74} that any profinite group is the Galois group of some field extension, that is, for a given profinite group $G$, there is a Galois extension $L$ of $K$ whose Galois group is $G$. More generally, Hoffmann in \cite[Corollary 3.3]{H20} showed that given a stable theory $T$ eliminating quantifiers and imaginaries, any profinite group is the Galois group of a Galois extension in a monster model, that is, for a given profinite group $G$, there is a Galois extension $K\subseteq L\subset \mathfrak{C}$ with $G(L/K)\cong G$ where $\mathfrak{C}$ is a monster model of $T$.

We ask whether the following generalisation holds. Fix a language $\CL$ with a set $\CJ$ of all sorts with the functions $J^*_{\cap}$ and  $J^*_{\subseteq}$ given in Example \ref{ex:typical_example_galois_gp}. Is there a stable $\CL$-theory eliminating quantifiers and imaginaries such that for a monster model $\mathfrak{C}$ of $T$, any sorted profinite group $(G,F)$ is a Galois group of a Galois extension $K\subseteq L\subset \mathfrak{C}$, that is, for the Galois group $(G(L/K),F_{G(L/K)})$ defined in Example \ref{ex:typical_example_galois_gp}, there is a isomorphism $\varphi:(G,F)\rightarrow (G(L/K),F_{G(L/K)})$ such that $F_{G(L/K)}=\varphi_*(F)$? 
\end{question}

\end{document}